\documentclass[10pt]{article}

\usepackage{graphicx}  
\usepackage[dvipsnames]{xcolor}  
\usepackage{float}  
\usepackage{booktabs}  
\usepackage{longtable}  
\usepackage{makecell}  
\usepackage{lmodern}  
\usepackage{geometry}  
\geometry{margin=3cm}
\usepackage{amsmath, amsfonts, amssymb, amsthm, mathtools}

\newcommand{\E}{\mathbb{E}}  

\usepackage{amsmath, amsfonts, amssymb, amsthm, mathtools}

\newtheorem{theorem}{Theorem}[section]
\newtheorem{lemma}[theorem]{Lemma}
\newtheorem{proposition}[theorem]{Proposition}
\newtheorem{corollary}[theorem]{Corollary}

\theoremstyle{definition}

\newtheorem{example}[theorem]{Example}

\theoremstyle{remark}
\newtheorem{remark}[theorem]{Remark}

\usepackage{caption}
\usepackage{subcaption}
\usepackage{pdfpages}  
\usepackage{pgfplots}
\pgfplotsset{compat=1.18}  
\usepgfplotslibrary{dateplot}  

\usepackage{comment}  
\usepackage{xspace}  
\usepackage{titlesec}  
\usepackage[section]{placeins}  
\usepackage{setspace}  
\usepackage{ulem}  
\usepackage{pdflscape}  
\usepackage{hyperref}  
\hypersetup{colorlinks=true, linkcolor=blue, citecolor=blue, urlcolor=black}

\usepackage[numbers,square]{natbib}
\setcitestyle{square}


\def\phi{{\varphi}}

\def\be{\begin{equation}}
\def\ee{\end{equation}}
\def\ber{\begin{eqnarray}}
\def\eer{\end{eqnarray}}

\textwidth = 6.5 in
\textheight = 9 in
\oddsidemargin = 0.0 in
\evensidemargin = 0.0 in
\topmargin = 0.0 in
\headheight = 0.0 in
\headsep = 0.0 in
\parskip = 0.0in
\parindent = 0.2in

 \begin{document}

\addtolength{\textheight}{0 cm} \addtolength{\hoffset}{0 cm}
\addtolength{\textwidth}{0 cm} \addtolength{\voffset}{0 cm}

\newenvironment{acknowledgement}{\noindent\textbf{Acknowledgement.}\em}{}

\setcounter{secnumdepth}{5}



\newtheorem*{assumption}{Assumptions on $a(x)$}

\newtheorem*{thm*}{Theorem A}

\title{ Positive singular solutions of a certain elliptic PDE}
\author{N. Mohammadnejad\footnote{University of Alberta, Edmonton Alberta, Canada, negar3@ualberta.ca} \quad    }


\date{}

\maketitle

\maketitle  
\begin{abstract}

In this paper, we investigate the existence of positive singular solutions for a system of partial differential equations on a bounded domain \begin{equation} \label{main equation of the thesis}
\left\{
\begin{array}{lr}
-\Delta u = (1+\kappa_1(x)) | \nabla v |^p & \text{in}~~ B_1 \backslash \{0\},\\
-\Delta v = (1+\kappa_2(x)) | \nabla u |^p & \text{in}~~ B_1 \backslash \{0\},\\
u = v = 0 & \text{on}~~ \partial B_1.
\end{array}
\right.
\end{equation}   
We investigate the existence of positive singular solutions within $B_1$, the unit ball centered at the origin in $ \mathbb{R}^N $, under the conditions $ N \geq 3 $ and  $\frac{N}{N-1} < p < 2 $. Additionally, we assume that $ \kappa_1$  and $\kappa_2$  are non-negative, continuous functions satisfying  $\kappa_1(0) = \kappa_2(0) = 0$ . Our system is an extension of the PDE equation studied by Aghajani et al. \cite{AghajaniA.2021Sepi} under similar assumptions. 
\end{abstract}

\section{Introduction}

Numerous studies have been done on the existence of positive singular solutions for various partial differential equations. One such example is the Lane-Emden equation.
\begin{equation} \label{base equation}
\left\{
\begin{array}{lr}
-\Delta u= u^p& \text{in}~~~~  \Omega,\\
u=0 & \text{on}~~ \partial \Omega,
\end{array}
\right.
\end{equation}

For \( p > 1 \), let \( \Omega \) be a bounded domain in \( \mathbb{R}^N \) (\( N \geq 3 \)) with a smooth boundary. The existence and non-existence of solutions to this problem have been extensively studied on various bounded domains and for different ranges of \( p \); see \cite{MR0192184,MR1998495,MR2057495,MR615628,MR762722}. A closely related example to equation \eqref{main equation of the thesis} is equation \begin{equation} \label{base equation}
\left\{
\begin{array}{lr}
-\Delta w = | \nabla w |^p & \text{in}~~ B_1 \backslash \{0\},\\
w = 0 & \text{on}~~ \partial B_1.
\end{array}
\right.
\end{equation} 
The case \( 0 < p < 1 \) was analyzed in \cite{AghajaniA.2018Ssoe}, while boundary blow-up versions of \eqref{base equation}, where the negative sign in front of the Laplacian is removed, were studied in \cite{MR990591,MR2488065}. Additionally, various studies on equations similar to \eqref{base equation} can be found in \cite{MR2718666,MR2514734,MR3229851,MR3426084,MR2193390}.  \\
In this work, we aim to establish the existence of positive singular solutions to equation \eqref{main equation of the thesis} in the unit ball \( B_1 \), where \( \frac{N}{N-1} < p < 2 \) and \( B_1 \) is centered at the origin in \( \mathbb{R}^N \) (\( N \geq 3 \)). Unlike previous studies, we impose no smallness conditions on \( \kappa_1 \) and \( \kappa_2 \) beyond the assumption that they vanish at the origin.  

Our main result is as follows.  

\begin{theorem}\label{main_theorem}
Suppose  $N\geq 3$, $\frac{N}{N-1}<p<2$ and $\kappa_1, \kappa_2$ are non-negative, continuous functions with $\kappa_1(0)=\kappa_2=0$. There exists an infinite number of positive singular solutions $u_t,v_t$ of
\begin{equation} \label{eq_syst}
\left\{
\begin{array}{lr}
-\Delta u= (1+\kappa_1(x)) | \nabla v|^p& \text{in}~~ B_1 \backslash \{0\},\\
-\Delta v= (1+\kappa_2(x)) | \nabla u|^p& \text{in}~~ B_1 \backslash \{0\},\\
u=v=0 & \text{on}~~ \partial B_1.
\end{array}
\right.
\end{equation} which blow up at the origin. Moreover, $u_t$ and $ v_t$ converge uniformly to zero away from the origin as $t$ approaches infinity. 
\end{theorem}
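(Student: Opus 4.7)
The plan is to extend the scheme of Aghajani et al.\ \cite{AghajaniA.2021Sepi} for the scalar case to the coupled system, in two stages: first construct an explicit one-parameter family of radial singular solutions to the decoupled model (with $\kappa_1 \equiv \kappa_2 \equiv 0$), and then obtain solutions to \eqref{eq_syst} by a fixed-point perturbation near each member of that family.

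For the model, the diagonal ansatz $u = v$ reduces \eqref{eq_syst} to the scalar equation $-\Delta w = |\nabla w|^p$ in $B_1\setminus\{0\}$ with $w = 0$ on $\partial B_1$. Seeking radial, strictly decreasing profiles and setting $\varphi(r) = -w_r(r)$, the PDE becomes the Bernoulli ODE $\varphi' + \frac{N-1}{r}\varphi = \varphi^p$. The substitution $\psi = \varphi^{1-p}$ linearizes it into a first-order linear ODE with general solution
\[
\psi(r) = \frac{p-1}{\beta-1}\, r + C\, r^{\beta}, \qquad \beta := (p-1)(N-1),
\]
and the hypothesis $p > N/(N-1)$ is precisely $\beta > 1$. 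For every $C \geq 0$ the expression $\psi$ is positive on $(0,1]$, so $\varphi_C = \psi^{1/(1-p)}$ is well defined and $W_C(r) := \int_r^1 \varphi_C(s)\,ds$ is a positive radial singular solution with $W_C(1)=0$, whose leading behavior near the origin is $W_C(x) \sim A|x|^{-\alpha}$ for $\alpha = (2-p)/(p-1)$ and a constant $A>0$ depending only on $N,p$ (the condition $p > N/(N-1)$ is exactly what forces $\alpha < N-2$ and $A > 0$). A direct estimate on the explicit formula shows that, on any set $\{|x| \geq \rho\}$, $W_C \to 0$ as $C \to \infty$; relabeling $t = C$ yields the desired family $\{W_t\}$.

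Next, for each large $t$, I look for $(u_t, v_t) = (W_t + \eta, W_t + \zeta)$ with zero boundary data. Using $-\Delta W_t = |\nabla W_t|^p$, the system becomes the fixed-point equation $(\eta,\zeta) = T(\eta,\zeta)$, where $T$ inverts the Dirichlet Laplacian on $B_1$ applied to
\[
\bigl((1+\kappa_1(x))|\nabla(W_t+\zeta)|^p - |\nabla W_t|^p,\ (1+\kappa_2(x))|\nabla(W_t+\eta)|^p - |\nabla W_t|^p\bigr).
\]
The natural setting is a Banach space of pairs equipped with a weighted norm of the form $\|\eta\|_{*} = \sup_{x \in B_1 \setminus \{0\}} |x|^{\alpha+2-\sigma}|\nabla\eta(x)|$ for a small $\sigma > 0$, tuned so that admissible perturbations sit strictly below the reference profile in a pointwise sense. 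Weighted Schauder-type estimates for $(-\Delta)^{-1}$, together with the smallness of $\kappa_i(x)$ as $x \to 0$, then yield that $T$ maps a small ball into itself and is a contraction. Different values of $t$ produce genuinely different solutions (the sub-leading behavior of $W_t$ depends nontrivially on $t$), giving infinitely many. The uniform convergence to zero away from the origin follows from the analogous property of $W_t$ and the fact that $\eta,\zeta$ are dominated by $W_t$ in the weighted norm.

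The main obstacle is the contraction step. Because $p < 2$, the map $q \mapsto |q|^p$ on $\mathbb{R}^N$ is only H\"older, so the difference $|\nabla(W_t+\zeta)|^p - |\nabla W_t|^p$ cannot be handled by a naive Lipschitz bound; one has to expand about $\nabla W_t$ and exploit $|\nabla W_t| \gg |\nabla \zeta|$ pointwise near the origin to extract a genuine Lipschitz estimate with a small constant. A parallel delicate point is absorbing $\kappa_i(x)|\nabla W_t|^p$, which is of size $\kappa_i(x)|x|^{-(\alpha+2)}$; controlling its inverse Laplacian in the weighted norm requires a quantitative modulus of continuity for $\kappa_i$ at the origin that beats the profile's blow-up rate. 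Balancing these competing requirements is what determines the weight exponent $\sigma$ and the threshold $t_0$.
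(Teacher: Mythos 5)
Your first stage (the explicit family $W_t$, obtained from the Bernoulli ODE) coincides with the paper's Example \ref{example}, and the overall ansatz $u=w_t+\eta$, $v=w_t+\zeta$ is also the paper's. The genuine gap is in your fixed-point step: you propose to invert only the Dirichlet Laplacian and treat the whole difference $(1+\kappa_1)|\nabla(W_t+\zeta)|^p-|\nabla W_t|^p$ as the right-hand side. Expanding about $\nabla W_t$, that difference contains the linear term $p|\nabla W_t|^{p-2}\nabla W_t\cdot\nabla\zeta$, whose coefficient is $p|\nabla W_t|^{p-1}=\frac{p}{|x|\,(t|x|^{\xi-1}+\beta)}$. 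In the natural weighted norms this term is scaling-critical: $|x|^{\sigma+2}\,p|\nabla W_t|^{p-1}|\nabla\zeta|\le \frac{p}{t|x|^{\xi-1}+\beta}\,\|\zeta\|_X$, and as $|x|\to 0$ the factor tends to $p/\beta$ for every $t$, so it cannot be made small by taking $t$ large, $R$ small, or the weight exponent $\sigma$ slightly perturbed. Hence the Lipschitz constant of your map contains the fixed quantity $\frac{p}{\beta}\,\|(-\Delta)^{-1}\|$ between the weighted spaces, which is not below $1$ in general; your remark that ``$|\nabla W_t|\gg|\nabla\zeta|$ near the origin'' removes the H\"older difficulty but is precisely what produces this non-small linear term. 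This is why the paper keeps the linearized gradient coupling on the left-hand side, decouples the system via $\zeta_1=\phi+\psi$, $\zeta_2=\phi-\psi$ into the scalar drift operators $L_t^{\pm}$ of \eqref{linear_operator}, and then spends Section 2 proving their invertibility $Y\to X$ with $t$-uniform bounds (spherical-harmonic kernel analysis, blow-up a priori estimates, a continuation argument in $t$, and separate treatment of the $k=0$ mode). Without some substitute for that linear theory, your contraction argument does not close.

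A secondary point: your claim that one needs ``a quantitative modulus of continuity for $\kappa_i$ at the origin that beats the profile's blow-up rate'' would, if true, prove a weaker theorem than the one stated, which assumes only continuity and $\kappa_i(0)=0$. In the paper's norms no such condition is needed, because $|x|^{\sigma+2}|\nabla w_t|^p=(t|x|^{\xi-1}+\beta)^{-p/(p-1)}$ is bounded, so the term $\kappa_i|\nabla w_t+\nabla\psi|^p$ is controlled by splitting into $|x|<\delta$ (where $\sup_{B_\delta}\kappa_i$ is small by continuity) and $\delta\le|x|\le 1$ (where the factor $(t\delta^{\xi-1})^{-p/(p-1)}$ is small for $t$ large); see Lemma \ref{into_lemma}. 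So that difficulty is an artifact of routing everything through $(-\Delta)^{-1}$ with your choice of weight, not an intrinsic feature of the problem.
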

Equation \eqref{eq_syst} is an extension of the equation studied by Aghajani et al. \cite{AghajaniA.2021Sepi} where they analyzed the existence of positive singular solutions on bounded domains and also classical solutions on exterior domains. 
\noindent
\textbf{The parameters.} The parameters $p, \xi,\beta,$ and $\sigma$ for the remainder of this work will be as follows unless otherwise stated.\\
\noindent
First we consider the following singular solution of a scalar problem. 
\begin{example} \label{example} Let $N \ge 3 $, $\frac{N}{N-1}<p<2$,  $\xi := (p-1)(N-1)$ (note this implies $ \xi>1$), $ \beta:=\frac{p-1}{\xi-1} >0$. Then for all $t \ge 0$ the radial function 
\begin{equation} \label{integral_solution}
w_{t}(r)=\int_{r}^{1} \dfrac{dy}{({ty^{\xi} + \beta y)}^{\frac{1}{p-1}}}
\end{equation} is the singular solution of 
\begin{equation} \label{eq_ball_scal}
\left\{
\begin{array}{lr}
-\Delta w= | \nabla w|^p& \text{in}~~ B_1 \backslash \{0\},\\
w=0 & \text{on}~~ \partial B_1.
\end{array}
\right.
\end{equation}
\end{example} 

\begin{remark}
    For the remaining sections of this work that focus on results in bounded domains, we adopt the parameter values from Example \ref{example}. This also applies to all the material presented in the Introduction.
\end{remark}
\noindent \textbf{Acknowledgement}
     This project was done during my master's thesis. I thank my advisor Dr. Craig Cowan for suggesting  this topic to me and for his support, understanding and valuable guidance


\subsection{Strategy} Our approach for finding a solution $ (u,v)$ of \eqref{eq_syst} will be to look for solutions of the form
$ u=w_t+\phi$ and $ v= w_t+ \psi$. Note that $ \phi,\psi$ are unknown functions, but they are equal to zero on the boundary of $ B_1$, and $w_t$ is the solution of \eqref{eq_ball_scal}. By considering our solutions $u$ and $v$ in this new form, $ \phi$ and $\psi$ need to satisfy 
\begin{equation} \label{eq_phi_psi}
\left\{
\begin{array}{lr}
-\Delta \phi - p | \nabla w_t|^{p-2} \nabla w_t \cdot \nabla \psi= \kappa_1(x) | \nabla w_t + \nabla \psi|^p + I(\psi)& \text{in}~~ B_1 \backslash \{0\},\\
-\Delta \psi - p | \nabla w_t|^{p-2} \nabla w_t \cdot \nabla \phi= \kappa_2(x) | \nabla w_t + \nabla \phi|^p + I(\phi)& \text{in}~~ B_1 \backslash \{0\},\\
\phi=\psi=0 & \text{on}~~ \partial B_1,
\end{array}
\right.
\end{equation}where 
\[ I(\zeta) \coloneqq | \nabla w_t + \nabla \zeta|^p - | \nabla w_t|^p - p | \nabla w_t|^{p-2} \nabla w_t \cdot \nabla \zeta.\] 

\noindent To find a solution of \eqref{eq_phi_psi}, we will apply a fixed point argument. A key step will be to understand the linear operator on the left hand side of \eqref{eq_phi_psi}, namely the solvability of 
\begin{equation}  \label{linear_syst}
\left\{
\begin{array}{lr} 
-\Delta \phi-p | \nabla w_t|^{p-2} \nabla w_t \cdot \nabla \psi=f& \text{in}~~ B_1 \backslash \{0\},\\
-\Delta  \psi - p | \nabla w_t|^{p-2} \nabla w_t \cdot \nabla \phi=g& \text{in}~~ B_1 \backslash \{0\},\\
\phi=\psi=0 & \text{on}~~ \partial B_1,
\end{array}
\right.
\end{equation} for $(\phi,\psi)$, given $ f $ and $g$.   

 \noindent We define the nonlinear mapping $T_t(\phi,\psi)= ( \hat{\phi}, \hat{\psi})$ via 
\begin{equation} \label{non-map1}
\left\{
\begin{array}{lr}
-\Delta \hat \phi - p | \nabla w_t|^{p-2} \nabla w_t \cdot \nabla \hat \psi= \kappa_1(x) | \nabla w_t + \nabla \psi|^p + I(\psi)& \text{in}~~ B_1 \backslash \{0\},\\
-\Delta \hat \psi - p | \nabla w_t|^{p-2} \nabla w_t \cdot  \nabla \hat \phi= \kappa_2(x) | \nabla w_t + \nabla \phi|^p + I(\phi)& \text{in}~~ B_1 \backslash \{0\},\\
\phi=\psi=0 & \text{on}~~ \partial B_1.
\end{array}
\right.
\end{equation} We will show that$T_t$ is a contraction on a suitable complete metric space. Subsequently, we will use this result to prove the existence of the positive singular solutions of \eqref{eq_syst}.  
\noindent 
We fix $ (f,g)$, and set $F \coloneqq f+g$ and $ G \coloneqq f-g$. Suppose that $ \zeta_i$ is a solution of the scalar problems
\begin{equation} \label{scal_1} \left\{
\begin{array}{lr}
-\Delta \zeta_1-p | \nabla w_t|^{p-2} \nabla w_t \cdot \nabla \zeta_1=F& \text{in}~~ B_1 \backslash \{0\},\\
\zeta_1=0 & \text{on}~~ \partial B_1,
\end{array}
\right.
\end{equation} 

\begin{equation} \label{scal_2} \left\{
\begin{array}{lr}
-\Delta \zeta_2+p | \nabla w_t|^{p-2} \nabla w_t \cdot \nabla \zeta_2=G& \text{in}~~ B_1 \backslash \{0\},\\
\zeta_2=0 & \text{on}~~ \partial B_1.
\end{array}
\right.
\end{equation} 
The problem in (\ref{scal_1}) has been studied in the previous work \cite{AghajaniA.2021Sepi}. In this work we will prove the results for(\ref{scal_2}). A linear algebra argument shows that if $ \zeta_i$ satisfies the above, then for $(\phi,\psi)$ to be a solution of (\ref{linear_syst}), we need $ \phi,\psi$ to satisfy $ \phi+ \psi=\zeta_1, $ and $ \phi - \psi= \zeta_2$. From this, we see that 
\begin{equation} \label{so_100}
\phi = \frac{\zeta_1 + \zeta_2}{2} \quad \text{and} \quad \psi = \frac{\zeta_1 - \zeta_2}{2}.
\end{equation}   So to understand the solvability of \eqref{linear_syst}, it is enough to understand the solvability of the two scalar problems given by \eqref{scal_1} and \eqref{scal_2}.  
One can write out the left hand sides of \eqref{scal_1} and \eqref{scal_2} explicitly respectively as 
\[ -\Delta \zeta_1 + \frac{p}{(t|x|^{\xi-1}+\beta )}  \frac{x \cdot \nabla \zeta_1}{|x|^2}, \quad  -\Delta \zeta_2 - \frac{p}{(t|x|^{\xi-1}+\beta )}  \frac{x \cdot \nabla \zeta_2}{|x|^2}.   \] 
This motivates the definition of the linear operators
\begin{equation}\label{linear_operator}
    L_t^\pm(\zeta)= -\Delta \zeta \pm \frac{p}{(t|x|^{\xi-1}+\beta )}  \frac{x \cdot \nabla \zeta}{|x|^2}.
\end{equation}
 So $L_t^+$ is the linear operator associated with the left hand side of (\ref{scal_1}) and $L_t^-$ is the linear operator associated with the left-hand side of (\ref{scal_2}).
Later we will need the following asymptotic result:  
\begin{align} \label{asymp_1}
  &\bullet \lim_{r \to 0}r^{\sigma+1}w'_t(r)=-C_\beta~~~~ \text{where }~~~C_\beta=\frac{1}{\beta^{\frac{1}{p-1}}}\\ \nonumber
&\bullet \mbox{ for all $ t \ge 0$, there exists a constant $C$ such that }  \lim_{r \searrow 0} r^\sigma w_t(r) = C. \end{align}

 \section{The linear theory}
\subsection{Analysis of the Linear operators $L_t^{\pm}$}
Define the following norms
\[ \| \phi \|_X \coloneqq  \underset{0<|x|\leq 1}{\sup}  \{ |x|^{\sigma} |\phi| +  |x|^{\sigma+1} |\nabla \phi|\}, \quad \|f \|_Y \coloneqq  \underset{0<|x|\leq 1}{\sup}  \{ |x|^{\sigma+2} |f(x)| \} \]
 where $ \sigma= \frac{2-p}{p-1}$. 
Let $X$ denote the set of functions  $\phi$ such that $\| \phi \|_X < \infty$  and vanish on  the boundary of $B_1$ and let $Y$ denote the set of functions $f$ such that $\|f\|_Y< \infty$.  
\noindent
The goal is to show that our nonlinear mapping $T_t(\phi, \psi)$ is a contraction by applying Banach's Fixed Point Theorem on the complete metric space 
\[ \mathcal{F}_R \coloneqq \left\{ (\phi,\psi) \in X \times X:  \| \phi\|_X, \|  \psi\|_X \le R     \right\}.   \]
On this space, we have
\[ \|( \phi,\psi)\|_{X \times X}:= \| \phi \|_X + \| \psi \|_X .\]
We can now state our main proposition in this chapter.
\begin{proposition} \label{the_real_main_result}
     Let $N \ge 3 $,  such that $\frac{N}{N-1}<p<2$,  $\xi = (p-1)(N-1)>1$, $ \beta=\frac{p-1}{\xi-1} >0$ and  $\sigma =\frac{2-p}{p-1}$. There is a positive constant $C$ such that for all  $ f,g \in Y$, there are functions $ \phi $ and $\psi $ in $X$ which satisfy the equation \begin{equation}  \label{linear_beg2}
\left\{
\begin{array}{lr}
-\Delta \phi-p | \nabla w_t|^{p-2} \nabla w_t \cdot \nabla \psi=f& \text{in}~~ B_1 \backslash \{0\},\\
-\Delta  \psi - p | \nabla w_t|^{p-2} \nabla w_t \cdot \nabla \phi=g& \text{in}~~ B_1 \backslash \{0\},\\
\phi=\psi=0 & \text{on}~~ \partial B_1,
\end{array}
\right.
\end{equation} and the estimate
\[ \| \phi \|_X + \| \psi \|_X \le C \|f\|_Y + C \|g\|_Y.\] 
\end{proposition}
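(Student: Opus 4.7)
The plan is to decouple the system (\ref{linear_beg2}) exactly as indicated in (\ref{so_100}). Set $F := f+g$ and $G := f-g$; then $F,G \in Y$ with $\|F\|_Y, \|G\|_Y \le \|f\|_Y + \|g\|_Y$. If one can find $\zeta_1, \zeta_2 \in X$ solving the scalar problems (\ref{scal_1}) and (\ref{scal_2}) with $L_t^+ \zeta_1 = F$ and $L_t^- \zeta_2 = G$, then setting $\phi = (\zeta_1+\zeta_2)/2$ and $\psi = (\zeta_1-\zeta_2)/2$ produces a solution of (\ref{linear_beg2}): adding the two scalar equations (and dividing by $2$) recovers the first equation of the system, subtracting them recovers the second, and both $\phi,\psi$ vanish on $\partial B_1$.

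The existence and $X$--estimate for $\zeta_1$ is supplied by the linear theory for $L_t^+$ developed in \cite{AghajaniA.2021Sepi}, which gives $\|\zeta_1\|_X \le C\|F\|_Y$ with $C$ independent of $t \ge 0$. Thus the task reduces to establishing an analogous solvability result for $L_t^-$: for every $G \in Y$ one must produce $\zeta_2 \in X$ with $L_t^- \zeta_2 = G$ and $\|\zeta_2\|_X \le C\|G\|_Y$, uniformly in $t$. Once both scalar estimates are available, the triangle inequality yields
\[
\|\phi\|_X + \|\psi\|_X \;\le\; \|\zeta_1\|_X + \|\zeta_2\|_X \;\le\; C\bigl(\|F\|_Y + \|G\|_Y\bigr) \;\le\; 2C\bigl(\|f\|_Y + \|g\|_Y\bigr),
\]
as required.

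The genuine difficulty is therefore the $L_t^-$ theory. Relative to $L_t^+$, the drift term has its sign reversed, so monotonicity and comparison arguments that were straightforward for $L_t^+$ must be redone. My approach would be to first study the radial part of $L_t^-$, which is a linear second-order ODE whose coefficients behave like $-\tfrac{N-1}{r}$ and $+\tfrac{p}{\beta r}$ near the origin; the choice $\sigma = (2-p)/(p-1)$ is designed so that the profile $|x|^{-\sigma}$ matches the critical indicial exponent. With this in hand, one constructs explicit radial barriers of the form $A|x|^{-\sigma} + B$ with $A,B$ chosen linearly in $\|G\|_Y$, and uses them as super/sub--solutions to obtain the weighted pointwise bound $|\zeta_2(x)| \le C\|G\|_Y |x|^{-\sigma}$. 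Uniformity in $t$ follows because the drift coefficient $p/(t|x|^{\xi-1}+\beta)$ is dominated by $p/\beta$ for every $t \ge 0$. Existence is then recovered by an approximation scheme: solve the problem on the annulus $B_1 \setminus B_\varepsilon$ via classical elliptic theory, apply the uniform barrier estimate, and pass to the limit $\varepsilon \searrow 0$.

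Finally, the gradient portion $|x|^{\sigma+1}|\nabla \zeta_2|$ of the $X$-norm is recovered from standard interior elliptic (Schauder or $W^{2,q}$) estimates applied on scales $|x|\approx r$: rescaling the equation on the ball $B_{r/2}(x_0)$ with $|x_0|=r$ converts the weighted estimate on $\zeta_2$ and the weighted bound on $G$ into a uniform scale-invariant estimate on $\nabla \zeta_2$, producing exactly the $|x|^{-(\sigma+1)}$ control dictated by differentiating the expected profile. Combining this gradient bound with the pointwise estimate completes the $X$-estimate, and the proposition then follows by the decomposition described above.
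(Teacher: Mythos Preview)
Your decoupling via $\zeta_1,\zeta_2$ and the appeal to \cite{AghajaniA.2021Sepi} for $L_t^+$ match the paper exactly. Where you diverge is in the $L_t^-$ theory. The paper does \emph{not} use barriers or the maximum principle: it decomposes into spherical-harmonic modes, proves Liouville/kernel results for $L_0^-$ and $L_t^-$ on each mode (Lemmas \ref{L_0 kernel lemma}--\ref{kernel_Lt}), establishes an a~priori estimate on the $k\ge1$ subspace $X_1$ by a blow-up/compactness contradiction (Theorem \ref{apriori_theorem}), runs a method-of-continuity argument in $t$ to get existence on $X_1$, and then treats the $k=0$ mode by an explicit integrating-factor ODE computation (Lemma \ref{k=0 mode lemma}). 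Your route is more elementary and in fact works: since $L_t^-$ has no zero-order term the maximum principle applies on every annulus, and a direct computation gives $L_t^-\bigl(|x|^{-\sigma}\bigr)=\sigma\Bigl[(N-2-\sigma)+\dfrac{p}{t|x|^{\xi-1}+\beta}\Bigr]|x|^{-\sigma-2}\ge \sigma(N-2-\sigma)\,|x|^{-\sigma-2}>0$ uniformly in $t\ge0$ (using $N-2-\sigma=N-1-\tfrac{1}{p-1}>0$ since $\xi>1$). This is the calculation you should display; your remark that $|x|^{-\sigma}$ ``matches the critical indicial exponent'' is inaccurate (it is not an indicial root of the radial $L_0^-$), but what you actually need is precisely this strict-supersolution inequality. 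The rescaled interior estimate for the gradient is standard and correct. Two remarks on scope: first, the same barrier fails for $L_t^+$ because the drift reverses the sign of the second bracket term and can make $L_t^+(|x|^{-\sigma})$ negative, so your method does not supersede the cited $L_t^+$ theory; second, the paper's mode-by-mode machinery, while heavier, does not rely on a favorable sign and yields the finer kernel information used later in the blow-up arguments.
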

\noindent
With the approach we have taken in this section, we use the change of notation from \eqref{so_100} and restate our proposition as follows.

\begin{proposition}\label{main_result}  Let $N \ge 3 $,  such that $\frac{N}{N-1}<p<2$,  $\xi = (p-1)(N-1)>1$, $ \beta=\frac{p-1}{\xi-1} >0$ and  $\sigma =\frac{2-p}{p-1}$.  There is some positive constant $C$ such that for all  $ f \in Y$ and non-negative $ t $, there is some $ \phi \in X$ which satisfies 
\begin{equation}  \label{eq_equation}
\left\{
\begin{array}{lr}
L_t^\pm(\phi)=f & \text{in}~~ B_1 \backslash \{0\},\\
\phi =0& \text{on}~~ \partial B_1.
\end{array}
\right.
\end{equation} Moreover, one has the estimate $\| \phi \|_X \leq C\| f\|_Y$.
\end{proposition}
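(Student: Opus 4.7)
The plan is to handle $L_t^-$ directly (this is the genuinely new case; the $L_t^+$ case is the content of \cite{AghajaniA.2021Sepi}) by producing $\phi$ as a limit of solutions on annuli $\Omega_\epsilon := B_1 \setminus \overline{B_\epsilon}$, with the relevant $X$-bound obtained uniformly in $\epsilon$ through a weighted supersolution and rescaled interior regularity.

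The critical first step is the construction of a radial supersolution of the form $\Phi(r) := A r^{-\sigma}$. A direct computation gives
\[
L_t^-(\Phi) \;=\; A\sigma\, r^{-\sigma-2}\!\left[\,N-2-\sigma + \frac{p}{tr^{\xi-1}+\beta}\right],
\]
and the bracket is uniformly positive in $t\ge 0$ and $r\in(0,1]$ because $N-2-\sigma = \tfrac{p(N-1)-N}{p-1} > 0$ by the hypothesis $p>N/(N-1)$. Taking $A := \|f\|_Y/(\sigma(N-2-\sigma))$ therefore yields $L_t^-(\Phi) \ge |f|$ throughout $B_1\setminus\{0\}$. This is the heart of the argument: the sign structure of $L_t^-$ is what makes $r^{-\sigma}$ a universal barrier, whereas for $L_t^+$ the bracket reads $N-2-\sigma - p/(tr^{\xi-1}+\beta)$ and can become negative as $r\to 0$, which is precisely why the $+$ case required the more involved construction of \cite{AghajaniA.2021Sepi}.

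With $\Phi$ in hand I would, on each $\Omega_\epsilon$, obtain a classical solution $\phi_\epsilon$ of $L_t^-(\phi_\epsilon)=f$ with $\phi_\epsilon=0$ on $\partial\Omega_\epsilon$ from standard linear elliptic theory (smooth coefficients away from the origin, no zeroth-order term, Fredholm alternative). Applying the weak maximum principle to $\Phi \pm \phi_\epsilon$---each of which is nonnegative on $\partial\Omega_\epsilon$ and satisfies $L_t^-(\Phi\pm\phi_\epsilon)\ge 0$---then delivers $|\phi_\epsilon(x)| \le C\|f\|_Y|x|^{-\sigma}$ on $\Omega_\epsilon$, uniformly in $\epsilon$ and $t$. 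For the gradient part of the $X$-norm I would rescale: at $x_0\in\Omega_\epsilon$ with $r_0 := |x_0|$, the function $\tilde\phi(y) := r_0^\sigma \phi_\epsilon(x_0 + (r_0/4)y)$ satisfies, on $B_1(0)$, an elliptic equation whose drift coefficient is $O(1)$ uniformly in $t$ (the singular $|x|^{-1}$ factor of the drift absorbs one factor of $r_0$ from the rescaling, and $(tr^{\xi-1}+\beta)^{-1} \le \beta^{-1}$) and whose right-hand side is bounded by $C\|f\|_Y$. Interior $C^{1,\alpha}$ estimates then give $|\nabla\tilde\phi(0)| \le C\|f\|_Y$, which unscales to $|\nabla\phi_\epsilon(x_0)| \le C\|f\|_Y r_0^{-\sigma-1}$; a standard boundary regularity estimate for $L_t^-$ provides the analogous control in a neighborhood of $\partial B_1$.

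The uniform $X$-bounds on $\phi_\epsilon$, together with the equicontinuity they imply on compact subsets of $\overline{B_1}\setminus\{0\}$, then allow me to extract via Arzel\`a--Ascoli a subsequential limit $\phi \in X$ solving $L_t^-(\phi)=f$ in $B_1\setminus\{0\}$ with $\phi=0$ on $\partial B_1$ and $\|\phi\|_X \le C\|f\|_Y$. The only genuine obstacle in the plan is the identification of the supersolution; once $\Phi(r)=A r^{-\sigma}$ is shown to work uniformly in $t$, everything downstream---approximation on annuli, the weak maximum principle, rescaled Schauder estimates, and the compactness argument---is standard.
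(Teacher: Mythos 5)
Your proposal is correct in substance, and for the $\kappa=-p$ operator it takes a genuinely different route from the paper (both you and the paper defer $\kappa=+p$ to \cite{AghajaniA.2021Sepi}). The paper proves the $-p$ case through modal machinery: spherical-harmonic decomposition, the kernel results of Lemma \ref{L_0 kernel lemma} and Lemma \ref{kernel_Lt}, the blow-up a priori estimate of Theorem \ref{apriori_theorem} on $X_1$, solvability at $t=0$ by variation of parameters (Lemma \ref{t=0 k nonzero mode}), an open--closed continuation argument in $t$, and a separate integrating-factor analysis of the $k=0$ mode (Lemma \ref{k=0 mode lemma}). You replace all of this with a single barrier: your computation $L_t^-(Ar^{-\sigma})=A\sigma r^{-\sigma-2}\bigl[N-2-\sigma+\tfrac{p}{tr^{\xi-1}+\beta}\bigr]$ is correct, the bracket is bounded below by $N-2-\sigma=\tfrac{p(N-1)-N}{p-1}=\tfrac{\xi-1}{p-1}>0$ uniformly in $t\ge 0$ and $0<r\le 1$, and you correctly note that for $L_t^+$ the analogous bracket at $t=0$ equals $-(\xi-1)<0$, which is exactly why the power barrier is special to the minus sign. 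Granting the barrier, the exhaustion by annuli, the comparison $|\phi_\epsilon|\le A|x|^{-\sigma}$, the rescaled interior gradient bound, the local boundary estimate near $\partial B_1$, and the Arzel\`a--Ascoli limit are routine; two details worth phrasing carefully are that the interior rescaling at $x_0$ needs $B_{|x_0|/4}(x_0)\subset\Omega_\epsilon$, so near the inner sphere the estimate is only available once $\epsilon$ is small relative to $|x_0|$ (which is all the limit requires), and that since $f\in Y$ is merely bounded on compact subsets, the annulus solvability and the scaled estimates should be run in the bounded-drift $W^{2,q}$ framework (strong solutions plus Sobolev embedding) rather than Schauder, with the limit equation recovered distributionally. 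As for what each approach buys: yours is shorter, yields the $t$-uniform constant in one stroke, and bypasses the kernel classification entirely; the paper's modal/continuation scheme is heavier but is the type of argument that also handles $L_t^+$, where no such barrier exists, and it produces the kernel and mode-by-mode information that the paper reuses elsewhere (in Theorem \ref{apriori_theorem} and in the closedness step of the continuation).
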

\noindent
\textbf{Spherical harmonics}.  Note $\Delta_\theta= \Delta _{S^{N-1}}$  is the Laplace-Beltrami operator on $S^{N-1}$ with eigenpairs $(\psi_k, \lambda_k)$ satisfying
\[-\Delta_\theta \psi_k(\theta)= \lambda_k\psi_k(\theta)~~~~  \text{for} ~~~  \theta \in S^{N-1}\]    and $\psi_0=1$,  $\lambda_0=0, \lambda_1=N-1, \lambda_2=2N$. We normalize $\psi_k$ in $L^2(S^{N-1})$ such that $\| \psi_k \|_{L^2(S^{N-1})}=1$.\\ 
 Using this, given $f \in Y, $ and $ \phi \in X$, we can decompose $ \phi$ and $f$ into various modes by writing\[
 f(x) = \sum_{k=0}^\infty b_k(r) \psi_k(\theta) \ ~~~~ \text{and} ~~~~ \phi(x) = \sum_{k=0}^\infty a_k(r) \psi_k(\theta).\]
 Note that $a_k(1)=0$ after considering the boundary condition of $ \phi$. A computation shows that $\phi$ satisfies \eqref{eq_equation} provided $ a_k$ satisfies 
\begin{equation} \label{ode_k_mode_b}
-a''_k(r)-\frac{(N-1)a'_k(r)}{r} + \frac{\lambda_k a_k(r)}{r^2} \pm \frac{pa'_k(r)}{\beta r +t r^{\xi}}=b_k(r) \quad \text{for}~~ 0<r<1 
\end{equation} with $ a_k(1)=0$.\\
Let $X_1$ and $Y_1$  be the closed subspaces of $X$ and $Y$ defined by 
\[X_1 \coloneqq \left\{ \phi \in X : \phi =\sum^{\infty}_{k=1} a_k(r)\psi_k(\theta) \right\}, \quad  \quad Y_1 \coloneqq \left\{ f \in Y : f=\sum^{\infty}_{k=1} b_k(r)\psi_k(\theta) \right\}. \] Note that $X_1$ and $Y_1$ are just the same representations as $X$ and $Y$ except that the summations start at $k=1$ and not $k=0$. \\
To show that they are closed subspaces of $X$ and $Y$, we need to show that if any sequence $\phi_m \in X_1$ is such that $\phi_m $ converges to some $\phi$ in $X$, then $\phi $ is in $X_1$. We need a similar result for $Y$ and $Y_1$. We first note that if we have $\phi(x) \in X_1$ then we can write $\phi(r, \theta)=\sum^{\infty}_{k=1} a_k(r)\psi_k(\theta)$ 
and by integrating both sides  with respect to $\theta$, we get \[\int_{S^{N-1}} \phi(r, \theta) d \theta=\sum^{\infty}_{k=1} a_k(r) \int_{S^{N-1}}\psi_k(\theta) d \theta=0.\]
This is due to the orthonormality of ${\psi_k}'s$ and $\psi_0=1$. 
Thus, we can conclude that for all $\phi $ in $X$ we have
\begin{equation}
    \label{closed subspace}
    \phi \in X_1 \Longleftrightarrow \int_{S^{N-1}} \phi(r, \theta)=0 ~~~ \text{for all}~~ 0<r \le1.
\end{equation}
Since by the assumption we have $\phi_m(r,\theta) \in X_1 $, it follows that $\int_{S^{N-1}} \phi_m(r, \theta)=0$ for all $0<r\le 1$. Fix $0<r\le 1$ and
 set $ \zeta_m(\theta) = \phi_m(r, \theta)$ and $ \zeta(\theta) = \phi(r ,\theta)$.  We claim that $ \zeta_m $  converges uniformly to $\zeta$ in $X$ on $S^{N-1}$.
Assuming the claim is true, we can fix $0<r\le 1$
and by the uniform convergence of $\zeta_m$ to $
\zeta$ on $S^{N-1}$ we get 
\[0=\int_{S^{N-1}} \phi_m(r ,\theta )\longrightarrow \int_{S^{N-1}} \phi(r, \theta).\]
Thus we have $\int_{S^{N-1}} \phi(r, \theta)=0$ and by \eqref{closed subspace} we can see that $\phi \in X_1$. 
We now prove the claim. If we fix  $\epsilon>0$, we can write \begin{align*}
 \| \phi_m - \phi \|_X  \geq \underset{0<|x|\leq 1}{\sup}|x|^\sigma
    |\phi_m( x, \theta) - \phi(x, \theta)|\geq \underset{\epsilon<|x|\leq 1}{\sup}|x|^\sigma
    |\phi_m( x, \theta) - \phi(x,\theta)|\geq {\epsilon
    }^\sigma 
    |\phi_m( x,\theta) - \phi(x,\theta)|.
\end{align*}
Note that $r=|x|$  and $\epsilon$ are fixed and we know that since $\phi_m$ converges to $\phi$ in $X$  we have $\| \phi_m - \phi \|_X \to 0$. So we get the uniform convergence of $\phi_m$ to $\phi$ away from the origin. We can use a similar proof for $Y_1$. Thus $X_1, Y_1$ are closed subspaces of $X$ and $Y$ respectively.


\noindent
\subsection{Kernel of $L^\pm_0$}
 Using the definition of our linear operator \eqref{linear_operator} in the case of $t=0$, we can write it explicitly as
\[ L_0^\kappa(\phi)= -\Delta \phi + \frac{\kappa x \cdot \nabla \phi(x)}{ \beta |x|^2}\]  
where $\kappa$ could be either equal to $+p$ or $-p $. For simplicity, we set
$ L_0^{+p}\coloneqq L_0^+ $ and $L_0^{-p} \coloneqq L_0^-  $.
\begin{lemma}  \label{L_0 kernel lemma}Let $\frac{N}{N-1}<p<2$,  $\xi = (p-1)(N-1)>1$, $ \beta=\frac{p-1}{\xi-1} >0$,  $\sigma =\frac{2-p}{p-1}$ and suppose $ \phi\in X_1$ is such that $L_0^\kappa(\phi)=0$ in $B_R \backslash \{0\}$ with $\phi=0$ on $\partial B_R$ in the case of $R$ finite where we have $\kappa=p$ or $\kappa=-p$, then $\phi=0$.
\end{lemma}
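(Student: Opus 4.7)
The plan is to expand $\phi$ in spherical harmonics and reduce $L_0^\kappa \phi = 0$ to a family of radial ODEs, one per angular mode. Writing $\phi(r,\theta) = \sum_{k \ge 1} a_k(r) \psi_k(\theta)$ — the hypothesis $\phi \in X_1$ removes the $k = 0$ term, and this will turn out to be essential — the equation becomes, by specializing (\ref{ode_k_mode_b}) to $t = 0$ and $b_k \equiv 0$,
\[
-a''_k(r) - \tfrac{N-1}{r}\,a'_k(r) + \tfrac{\kappa}{\beta r}\,a'_k(r) + \tfrac{\lambda_k}{r^2}\,a_k(r) = 0 \qquad \text{on } (0,R),
\]
supplemented by $a_k(R) = 0$ when $R < \infty$. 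This is a Cauchy--Euler equation, so its general solution is $a_k(r) = c_1 r^{\gamma_k^+} + c_2 r^{\gamma_k^-}$ with $\gamma_k^\pm$ the two real roots of
\[
P(\gamma) \;=\; \gamma^2 + \bigl(N-2-\tfrac{\kappa}{\beta}\bigr)\gamma - \lambda_k.
\]

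My next step is to exploit the $X$-membership of $\phi$ together with the boundary behavior to force both coefficients to vanish. The bounds built into $\|\phi\|_X$ translate into $a_k(r) = O(r^{-\sigma})$ and $a'_k(r) = O(r^{-\sigma - 1})$ as $r \to 0^+$, which forces the coefficient of any root $\gamma < -\sigma$ to vanish. In the finite-$R$ case the boundary condition $a_k(R) = 0$ then kills the remaining root; in the $R = \infty$ case the same $X$-bound imposed as $r \to \infty$ kills the surviving positive exponent. Either way $a_k \equiv 0$ for every $k \ge 1$, and hence $\phi \equiv 0$.

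The main obstacle is the exponent comparison: one must check that for every $k \ge 1$ and either choice $\kappa = \pm p$, one has $P(-\sigma) < 0$, so that the two roots strictly straddle $-\sigma$. Using the identity $1/(p-1) = \sigma + 1$ — equivalently $1/\beta = (N-1) - 1/(p-1) = N - 2 - \sigma$ — a direct substitution produces the two clean identities
\[
P(-\sigma) \;=\; -\tfrac{(1+p)\sigma}{\beta} - \lambda_k \quad (\kappa=-p), \qquad P(-\sigma) \;=\; \sigma(\xi-1) - \lambda_k \quad (\kappa=+p).
\]
The first is manifestly negative. The second, upon using $\sigma(p-1) = 2-p$ and $\lambda_k \ge \lambda_1 = N-1$, reduces to the inequality $(N-1)(1-p) < \sigma$, which is immediate since the left side is negative while $\sigma > 0$. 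This also explains why the restriction to $X_1$ is unavoidable: for $k = 0$ one has $\lambda_0 = 0$, so the second expression above becomes $\sigma(\xi-1) > 0$ and there is genuine nontrivial kernel for $L_0^+$ on all of $X$.
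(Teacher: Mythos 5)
Your proposal is correct and follows essentially the same route as the paper's proof: decompose into spherical harmonics, reduce to the Euler ODE $r^2a_k''+(N-1-\tfrac{\kappa}{\beta})ra_k'-\lambda_k a_k=0$, and use the $X$-bounds near $r=0$ (and at infinity when $R=\infty$) together with $a_k(R)=0$ to kill both indicial exponents, the key point being that the roots straddle $-\sigma$. The only cosmetic differences are that you evaluate the indicial quadratic at $-\sigma$ (where the paper uses $-\sigma-1$ and monotonicity in $k$ for $\kappa=-p$) and you treat $\kappa=+p$ explicitly rather than deferring it to \cite{AghajaniA.2021Sepi}; your closing observation about the nontrivial $k=0$ kernel of $L_0^+$ is also correct.
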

\begin{proof} We saw that we can decompose $\phi(x)$ into $\phi(x)= \sum_{k=1}^{\infty} a_k(r) \psi_k(\theta),$ and since $L_0^\kappa=0$ in $B_R \backslash \{0\}$, then
 $a_k$ should satisfy
\begin{equation}\label{kernel L_0 equation}
     -a''_k(r)-\frac{(N-1)a'_k}{r} + \frac{\lambda_k a_k(r)}{r^2} +\frac{\kappa a'_k(r)}{\beta r}  =0 \quad \text{for}~~ 0<r<R
\end{equation}
where $a_k(R)=0$ in the case of $R< \infty$. Also we have \begin{equation}\label{a_k in L_0 statement}
    \underset{0<r<R }{\sup}  \{ r^{\sigma} |a_k(r)| +  r^{\sigma+1} |a'_k (r)|\}< \infty.
\end{equation}
\textit{Proof of the statement.} To show that \eqref{a_k in L_0 statement} is true, note that 
given $ \phi \in X$, we have 
$ \phi(x) = \sum_{k=0}^\infty a_k(r) \psi_k(\theta)$ with $a_k(R)=0$. By using the property of orthonormality of  ${\psi_k(\theta)}'s$,  we get 
\[ a_k(r) = \int_{ S^{N-1}} \phi(r, \theta) \psi_k(\theta) d \theta. \] 
Thus, noting that $|\phi(r,\theta)|r^\sigma \leq \|\phi\|_X$, we obtain
\[|a_k(r)| \leq \underset{S^{N-1}}{\sup}|\psi_k(\theta)| \int_{|\theta|=1} \frac{\|\phi\|_X}{r^\sigma}d \theta.\] Since $\phi$ is in $X$, there exists a positive $C_k$ such that $\|\phi\|_X \leq C$. Thus we get $|a_k(r)| \leq \frac{C_k}{r^\sigma}$ and we deduce \begin{equation}\label{firsta_k inequality}
    r^\sigma |a_k(r)| \leq C_k.
\end{equation}
Using the definition of the gradient in n-dimensional spherical coordinates and the orthogonal properties, we obtain 
$\phi_r= \nabla \phi(x) \cdot \hat{r}$ which gives us $ |\phi_r| \leq |\nabla \phi|  .$
Again for $\phi \in X  $  we can write $\phi_r(r ,\theta) = \sum_{k=0}^\infty a'_k(r) \psi_k(\theta)$ where $a_k(1)=0$. We use the orthonormality of ${\psi_k(\theta)}'s$, and we obtain
\[ a'_k(r) = \int_{S^{N-1}} \phi_r(r ,\theta) \psi_k(\theta) d \theta. \] 
Thus, noting that we have $|\nabla\phi(r,\theta)|r^{\sigma+1} \leq \|\phi\|_X$, we can write
\begin{align*}
|a'_k(r)| \leq& \underset{|\theta|=1}{\sup}|\psi_k(\theta)| \int_{S^{N-1}} |\phi_r(r, \theta)| d \theta 
\leq  \hat{C_k}\int_{|\theta|=1} |\nabla \phi(r \theta)| d\theta  \leq  \hat{C_k}\int_{|\theta|=1} \frac{\|\phi\|_X}{r^{\sigma+1}} \leq \frac{\tilde{C_k}}{r^{\sigma+1}}.
\end{align*}
From this, we deduce \begin{equation}\label{a_k second inequality}
     r^{\sigma+1} |a'_k(r)| < \Tilde{C}.
\end{equation}
The results from  \eqref{firsta_k inequality} and \eqref{a_k second inequality} give us 
$ \underset{0<r<R }{\sup}  \{ r^{\sigma} |a_k(r)| +  r^{\sigma+1} |a'_k (r)\}< \infty . $
$\hfill \Box$\\
Note that the equation \eqref{kernel L_0 equation}
 is an Euler ODE. We rewrite it as
\begin{equation*}
r^2 a''_k(r)+ r a'_k(r) \left( (N-1)  -\frac{\kappa}{\beta}  \right) - \lambda_k  a_k(r) =0
\end{equation*}
and its solution can be written as $ a_k(r)=C_k r^{\gamma _k ^{+}}+D_k r^{\gamma _k ^{-}}$
where $\gamma^{\pm}$ are given by \begin{equation}\label{gamma_general}
 \gamma_k ^{\pm}(\kappa)= \frac{-(N-2-\frac{\kappa}{\beta})}{2} \pm \frac{\sqrt{(N-2-\frac{\kappa}{\beta})^2+4 \lambda_k}}{2}. 
\end{equation} 
\noindent 
\noindent
First, we let $\kappa= -p.$\\
 We will be looking at the kernel of $L_0^-=L_0^{-p}$.
 \noindent Thus our ODE becomes \begin{equation}\label{ode-equ-k=--l_0}
    -a''_k(r)-\frac{(N-1)a'_k}{r} + \frac{\lambda_k a_k(r)}{r^2} -\frac{p a'_k(r)}{\beta r}  =0 \quad \text{for}~~ 0<r<R
\end{equation} and the solution can be written as  $a_k(r)=C_k r^{\gamma _k ^{+}}+D_k r^{\gamma _k ^{-}}$ for some $C_k,  D_k \in \mathbb{R}$ 
where $\gamma^{\pm}$ are given by \eqref{gamma_general} where $\kappa=-p$.
We claim that   $\gamma_k^- +\sigma \leq -1$ for $k \geq 1$.  
To show this, define a function $f(\gamma)=\gamma^2 + (N-2+ \frac{p}{\beta}) \gamma- \lambda_1$.  
Noting that
$-\sigma-1=\frac{-1}{p-1}$, and by considering the values of $\beta$ and $\xi$
, we get $f(-\sigma
-1)=\frac{2p(1-\xi)}{(p-1)^2}$.
Since $\xi>1$, we have $f(-\sigma-1)<0$. Note that $f(\gamma)$ is a quadratic function with $\gamma_1^\pm$ as its roots. We showed that $f(-\sigma-1)<0$. Thus, we can conclude $\gamma_1^-\leq-\sigma-1\leq \gamma_1^+ $ and this means that $\gamma_k^-\leq -\sigma-1$ for all $k \geq 1$ by the monotonicity in $k$. We now show that $\gamma_k^+ +\sigma$ is nonzero and positive.
Note that  we have
\[\gamma_1 ^{+}(-p)= \frac{-(N-2+\frac{p}{\beta})}{2} + \frac{\sqrt{(N-2+\frac{p}{\beta})^2+4(N-1)}}{2} >0.\]
Thus by the monotonicity in $k$, we see that $\gamma_k^+ +\sigma$ is positive for all $k\geq 1$
 . \\
We first consider the case where $0<R< \infty $.
By considering the boundary condition, to have $a_k(r)$ satisfy \eqref{a_k in L_0 statement}, we must have $a_k(r)=0$. 
For the case $R= \infty$, we note that $\gamma_k^- +\sigma $ is negative, and $ \gamma_k^+ +\sigma $ is positive and they are distinct. By sending $r$ to zero and infinity, we deduce that in order to have $a_k$ satisfy \eqref{a_k in L_0 statement}, we must have $ a_k=0$. This shows that for all $k \geq 1$ \begin{equation} \label{L0-reuslt2}
    \phi =0.
\end{equation}
The proof will be very similar for the case $\kappa=p$ and a simplified proof of this case can also be found in \cite{AghajaniA.2021Sepi} and we skip the proof here.
Thus we proved the lemma, and so we showed that for $\phi \in X_1$ if $L_0^\kappa(\phi)=0$ in $B_R \backslash \{0\}$ with $\phi=0$ on $\partial B_R$ in the case of $R$ finite where $\kappa=p$ or $\kappa=-p$, then $\phi=0$.

\end{proof} 
 \subsection{Kernel of $L_t^{\pm}$}
 
Recall that we have our linear operator $L_t^k$ defined as :
\begin{equation} \label{Lt_equation}
L_t^\kappa(\phi)= -\Delta \phi \pm \frac{p}{(t|x|^{\xi-1}+\beta )}  \frac{x \cdot \nabla \phi}{|x|^2}\end{equation}
where $\kappa$ is either equal to $p$ or $-p$.
\begin{lemma} \label{kernel_Lt}
Let $0<R \leq \infty$ , $0 \le t \leq \infty$.
When $\kappa= p$,  suppose $ \phi \in X_1$ is such that $L_t^p(\phi)=0$ in $B_R \backslash \{0\}$ with $\phi=0$ on $\partial B_R$ where $R$ is finite then $\phi=0$. When  $\kappa=-p$, suppose $ \phi \in X$ is such that $L_t^{-p}(\phi)=0$ in $B_R \backslash \{0\}$ with $\phi=0$ on the boundary of $B_R$ when $R$ is finite, then $\phi=0$ and in case of $R=\infty$,  then $ \phi$ is constant. 
\end{lemma}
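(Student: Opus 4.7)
My plan is to expand $\phi$ in spherical harmonics and analyse each radial mode using a weighted Sturm--Liouville energy identity, with the asymptotic behaviour near $r=0$ controlled by the same indicial roots $\gamma_k^\pm(\kappa)$ that appeared in Lemma \ref{L_0 kernel lemma}. First I decompose $\phi(r,\theta) = \sum_k a_k(r) \psi_k(\theta)$, with the sum starting from $k=1$ in the $\kappa = p$ case (since $\phi \in X_1$) and from $k=0$ in the $\kappa = -p$ case (since $\phi \in X$). Exactly as in Lemma \ref{L_0 kernel lemma}, orthogonality of the $\psi_k$ forces each mode to satisfy the uniform bound $r^\sigma |a_k(r)| + r^{\sigma+1}|a_k'(r)| \leq C_k$ together with the ODE
\[
-a_k'' - \Bigl(\frac{N-1}{r} \mp \frac{p}{\beta r + t r^\xi}\Bigr) a_k' + \frac{\lambda_k}{r^2} a_k = 0 \quad \text{on } (0,R),
\]
with $a_k(R) = 0$ when $R < \infty$.

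The key step is to rewrite the mode ODE in divergence form using the integrating factor
\[
\mu_\pm(r) = r^{N-1} \Bigl(\frac{r^{\xi-1}}{\beta + t r^{\xi-1}}\Bigr)^{\mp p/(p-1)},
\]
so that $(\mu_\pm a_k')' = \lambda_k \mu_\pm a_k / r^2$. Multiplying by $a_k$ and integrating over $(\epsilon, R')$ gives
\[
[\mu_\pm a_k' a_k]_\epsilon^{R'} = \int_\epsilon^{R'} \mu_\pm (a_k')^2 \, dr + \lambda_k \int_\epsilon^{R'} \frac{\mu_\pm a_k^2}{r^2}\,dr,
\]
a sum of non-negative integrals which, for $k \geq 1$, forces $a_k \equiv 0$ once the boundary term on the left is shown to vanish. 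The term at $r = R$ is immediate from $a_k(R) = 0$ when $R < \infty$; when $R = \infty$, the drift decays and solutions there behave like $r^{\nu_k^\pm}$ with $\nu_k^\pm = \frac{-(N-2) \pm \sqrt{(N-2)^2 + 4\lambda_k}}{2}$, and after killing the growing mode via the $X$-bound a direct computation gives $\mu_\pm a_k' a_k \sim r^{-\sqrt{(N-2)^2 + 4\lambda_k}} \to 0$.

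The delicate boundary term is at $r \to 0$. Here the mode ODE has a regular singular point, and because $t r^\xi$ is dominated by $\beta r$ as $r \to 0$, the indicial roots coincide with the $t=0$ values $\gamma_k^\pm(\kappa)$ of \eqref{gamma_general}. A Frobenius-type asymptotic analysis writes $a_k$ near $0$ as $c_+ r^{\gamma_k^+(\kappa)}(1 + o(1)) + c_- r^{\gamma_k^-(\kappa)}(1 + o(1))$, and the strict inequalities $\gamma_k^-(\kappa) + \sigma < 0 < \gamma_k^+(\kappa) + \sigma$ already established for both signs of $\kappa$ in Lemma \ref{L_0 kernel lemma} then force $c_- = 0$ through the growth bound. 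Using the identity $p/\beta = (\xi-1)p/(p-1)$ to simplify $\mu_\pm(r) \sim r^{N-1 \mp p/\beta}$ near $0$, one verifies that $\mu_\pm a_k' a_k$ vanishes at the rate $r^{\sqrt{(N-2 \mp p/\beta)^2 + 4\lambda_k}}$, completing the energy estimate for $k \geq 1$.

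The $k = 0$ mode in the $\kappa = -p$ case is treated separately since the zeroth-order term drops out: the ODE reduces to $(\mu_- a_0')' = 0$, whence $a_0(r) = C_1 + C_2 \int_r^R \mu_-(s)^{-1}\, ds$; the singular rate $\mu_-(s)^{-1} \sim s^{-(N-1 + p/\beta)}$ is non-integrable at $0$, and the growth bound on $a_0$ forces $C_2 = 0$, leaving $a_0$ constant. The boundary condition then forces $a_0 \equiv 0$ when $R < \infty$ and leaves $a_0 \equiv C_1$ arbitrary when $R = \infty$, matching the claim. The main technical obstacle is justifying the Frobenius-type asymptotics near $0$ despite the non-integer power $r^\xi$ in the coefficients; I would handle this by the substitution $s = \log r$, which converts the mode ODE into $a_{ss} + c(s) a_s - \lambda_k a = 0$ with $c(s) \to N - 2 \mp p/\beta$ as $s \to -\infty$, putting us in the setting of Levinson's asymptotic theorem for linear ODEs with coefficients approaching a limit.
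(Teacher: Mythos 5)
Your proposal is correct, but for the nonzero modes it takes a genuinely different route from the paper. The paper first proves an improved decay estimate $r^\sigma a_k(r)\to 0$ as $r\to 0$ (and as $r\to\infty$ when $R=\infty$) by a rescaling/compactness argument that reduces matters to the trivial kernel of $L_0^{-}$ from Lemma \ref{L_0 kernel lemma}, and then runs a maximum-principle argument on $\omega(\tau)=r^\sigma a_k(r)$ in logarithmic coordinates, using the sign of the zeroth-order coefficient $C_k(\tau)$ to force $\omega\equiv 0$; no asymptotic expansion of $a_k$ near the singular endpoints is ever needed. You instead put each mode ODE in divergence form with the explicit integrating factor $\mu_\pm$, integrate by parts, and control the boundary terms at $0$ and $\infty$ by asymptotic integration (Levinson) after the substitution $s=\log r$, exploiting the exponential convergence of the drift coefficient and the indicial roots $\gamma_k^\pm(\kappa)$ of \eqref{gamma_general}; the $X$-bound kills the bad branch and the boundary terms then vanish at an explicit polynomial rate, so the energy identity forces $a_k\equiv 0$ for $k\ge 1$. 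Both routes are sound: yours treats $\kappa=\pm p$ and all finite $t$ uniformly and avoids the blow-up/compactness machinery, at the price of needing asymptotics for $a_k'$ as well as $a_k$ (the system form of Levinson's theorem does supply these, and you should invoke it in that form explicitly), whereas the paper's route only uses the crude mode bounds together with the $t=0$ kernel lemma. Your $k=0$ treatment (integrating factor, non-integrability of $\mu_-^{-1}$ at the origin forcing the non-constant branch to vanish, then the boundary condition) is essentially the paper's argument. Two small points to tidy up: the statement allows $t=\infty$, where the drift disappears and your remark that the indicial roots coincide with the $t=0$ values no longer applies (the argument becomes easier, pure Euler modes, but it should be said); and for $\kappa=+p$ you rely on the inequalities $\gamma_k^-(p)+\sigma<0<\gamma_k^+(p)+\sigma$, which Lemma \ref{L_0 kernel lemma} asserts for both signs but verifies in detail only for $\kappa=-p$, deferring $+p$ to the cited reference, so a one-line verification would make your proof self-contained.
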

\begin{proof}
 For the proof we will switch notations,  
and hence by \eqref{Lt_equation} we can write $L_t^+ = L_t^p$ and $L_t^- = L_t^{-p}$.  Suppose $R, t , \phi$ are as in the hypothesis. \\
 $\bullet$ We set $\kappa=+p.$ So we are considering the kernel of $L_t^+$. A very similar proof was given in \cite{AghajaniA.2021Sepi} so we will skip it here and we will do the case $\kappa=-p$ which is more technical.  

$\hfill \Box$\\
\noindent
$\bullet$ We now set $\kappa = -p.$ In this part, we focus on kernel of $L_t^-$.
 First we set $k\geq 1$. Thus $\phi $ is in $X_1$. Suppose $0< t< \infty$. We write $\phi$ as $ \phi(x)= \sum_{k=1}^{\infty} a_k(r) \psi_k(\theta),$ and then $a_k$ should satisfy
\[ -a''_k(r)-\frac{(N-1)a'_k}{r} + \frac{\lambda_k a_k(r)}{r^2} -\frac{p a'_k(r)}{\beta r +t r^{\xi}}=0 \quad \text{for}\quad 0<r<R  \] where $a_k(R)=0$ when $R<\infty$. We should have \begin{equation}\label{a_k estimate second case k=-p}\underset{0<r<R }{\sup}  \{ r^{\sigma} |a_k(r)| +  r^{\sigma+1} |a'_k (r)|\}< \infty.
\end{equation}\noindent
  We  fix $k\geq 1$ and we set $\omega(\tau) = r^\sigma a_k(r) $ where $\tau = \ln {(r)}$.
By a computation, we find that $\omega= \omega(\tau)$ satisfies
$ \omega_{\tau \tau} + g(\tau) \omega_\tau +C_k \omega=0$ for $\tau \in (-\infty , \ln{(R)}) $
where
$g(\tau)=N-2-2\sigma + \frac{p}{\beta +te^{(\xi-1)\tau}}$ and 
$C_k(\tau)= - \lambda_k - \frac{p \sigma}{\beta + t e^{(\xi-1)\tau}}- \sigma(N-2-\sigma).$
We claim the improved decay estimate:  
$
r^\sigma |a_k(r)| \to 0 \quad \text{as} \quad r \to 0,
$
and for \( R = \infty \),
$
r^\sigma |a_k(r)| \to 0 \quad \text{as} \quad r \to \infty.
$
Assuming this, we obtain \( \omega \to 0 \) as \( \tau \to -\infty \), and for \( R = \infty \), also as \( \tau \to \infty \).
By multiplying by \(-1\) if necessary, we assume \( \omega \neq 0 \) ( and since \( \omega(-\infty) = \omega(\ln R) = 0 \) ), there exists \( \tau_0 \in (-\infty, \ln R) \) with  
$
\omega(\tau_0) = \max \omega > 0.
$
This gives \( \omega_{\tau\tau}(\tau_0) \leq 0 \) and \( \omega_\tau(\tau_0) = 0 \), leading to  
$
g(\tau_0) \omega_\tau(\tau_0) = 0.
$ From the equation we can deduce 
$\omega_{\tau \tau}(\tau_0) + C_k\omega(\tau_0) = 0 $ which means $-\omega_{\tau \tau}(\tau_0) = C_k\omega(\tau_0) \geq 0.$
From this, we see that we must have 
\[C_k(\tau_0)= - \lambda_k - \frac{p \sigma}{\beta + t e^{(\xi-1)\tau_0}}- \sigma(N-2-\sigma)\geq 0 .\]\noindent
We know $\lambda_k $ is positive for all $k\geq 1$ and it is obvious that $\frac{p \sigma}{\beta + t e^{(\xi-1)\tau}}>0$ for all $\tau \in (-\infty, \ln{(R)})$. Also note that
$N-2-\sigma =N-1-\frac{1}{p-1}.$
By considering the restrictions on $p$, we find that
$N-1-\frac{1}{p-1}>0$ for $N\geq3$.
Thus we can deduce that \[-\lambda_k<0,\quad -\frac{p \sigma}{\beta + t e^{(\xi-1)\tau}}<0, \quad \text{and} \quad - \sigma(N-2-\sigma)<0. \] So
$C_k(\tau)$ is positive for all $\tau\in(-\infty, \ln{(R)})$ which means $C_k(\tau_0)<0$. Hence, we have a contradiction and thus $\omega=0$. This gives us  $a_k=0$ for all $k\geq 1$.

We now prove the claimed decay estimate.\\
\textit{Proof of the claimed decay estimate}.  We fix $k\geq 1$ and set $a(r)=a_k(r)$, so we have 
\[ -\Delta a(r)+ \frac{\lambda_k a(r)}{r^2} - \frac{p a'(r)}{\beta r + t r^{\xi}}=0 \quad \text{in} \quad 0< r< R,\]
with $a(R)=0$. 
Suppose the claim is false.  Then there is some $r_m $ that goes to zero such that $r_m^\sigma |a(r_m)| \geq \epsilon_0 >0$. Define the rescaled function $a^m(r) \coloneqq r^\sigma_m a(r_mr)$ and note that $|a_m(1)| \geq \epsilon_0$ and $r^\sigma |a^m(r)| \leq C.$  A computation shows that\[{r_m}^{\sigma+2} \left[ \Delta a(r_mr)+\frac{\lambda_k a (r_mr)}{{(r_mr)}^2} - \frac{p a'(r_mr)}{\beta r_mr+ t {(r_mr)}^{\xi}} \right]=0 \quad~~ \text{in}~~ 0< r_mr< R,\]so we get \begin{equation}\label{a_m_of kernrel_equation}
   -\Delta a^m(r)+ \frac{\lambda_k a^m(r)}{r^2} - \frac{p (a^m)'(r)}{\beta r + t r^{\xi}r_m^{\xi-1}}=0 \quad \text{in} \quad 0< r<\frac{ R}{r_m}. 
\end{equation}
By passing to the limit, we find $a^\infty$ such that it is bounded away form zero with $r^\sigma | a^\infty (r)| + r^{\sigma+1}|(a^\infty)'(r)| \leq C$. Thus, we have 
\[ -\Delta a^\infty(r)+ \frac{\lambda_k a^\infty(r)}{r^2} + \frac{p (a^\infty)'(r)}{\beta r }=0 \quad \text{in} \quad 0< r<\infty.\] 
Let $\phi(x) = a^\infty(r) \psi_k (\theta)$. So $\phi$ is nonzero too and it is in the kernel of $L_0^-$. This is a contradiction with Lemma \ref{L_0 kernel lemma} which stated the kernel of $L_{0}^-$  is trivial.\\
In the case of $R=\infty$, we  assume there exists some $r_m  $ approaching infinity as $m $ goes to infinity.  Again we pass to the limit in \eqref{a_m_of kernrel_equation}
and we get
\[ -(a^\infty)''(r)-\frac{(N-1)(a^\infty)'}{r} + \frac{\lambda_k a^\infty(r)}{r^2} =0 \quad \text{in} \quad 0< r<\infty.\] 
This is again an Euler ODE  and the characteristic equation would be $\gamma^2 +(N-2)\gamma -\lambda_k=0$ such that 
\begin{align*}
\gamma_k^{\pm}=& \frac{-(N-2)\pm \sqrt{(N-2)^2+4\lambda_k}}{2}.
\end{align*} Thus, the solution is  $a_k(r)= C_kr^{\gamma_k^+}+D_kr^{\gamma_k^-}$. We have that $\gamma_k^-+\sigma <0< \gamma_k^+ +\sigma$. So we can deduce that when we send $r$ to zero and infinity, we must have $C_k=D_k=0$ in order to have $a_k$ in the required space. This result give us that $a_k=0$ for all $k\geq 1$.\\
$\bullet$ We now set $k=0$. We have $L_t^-=0$ thus we can write 
    \begin{equation}\label{a_k_-_zero_mode}
        -a''_k(r)-\frac{(N-1)a'_k}{r} -\frac{p a'_k(r)}{\beta r +t r^{\xi}}=0 \quad \text{for}\quad 0<r<R \end{equation}
        such that$\underset{0<r<1}{\sup} \{ r^{\sigma} |a_t(r)|+r^{\sigma+1} |a'_t(r)|\} \leq C$.
    From \eqref{a_k_-_zero_mode}, using the integrating factor $\mu_t(r)$, we get: \[\frac{d}{dr}(\mu_t(r)a'_t{(r)})= 0 \Rightarrow \mu_t a'(r)=C\]
where $C$ is a constant. 
Thus we have
\begin{align*}
    r^{\sigma+1} |a_t'(r)|=\frac{C r^{\sigma+1}}{\mu_t(r)}=Cr^{\sigma+1-N+1-\frac{p}{\beta}}\Bigg(\frac{tr^{\xi-1}+\beta}{t+\beta}\Bigg)^{\frac{p}{p-1}}. 
\end{align*}
Note that $\sigma+2-N-\frac{p}{\beta}=\frac{(\xi-1)(-1-p)}{p-1}<0$. 
 Since $\phi $ is in $X$, we require $a'_t$ to satisfy the required bounds meaning  $r^{\sigma+1}|a'_t(r)|$  should be bounded. So we can deduce that $C$ should be zero. So we have 
\[\mu_t a'_t(r)=0.\]
By considering the boundary condition $a_t(R)=0$, we obtain $a_t(r)=0$ for $r\in(0,R)$ and $a_t(r)$ is constant on the whole space $\mathbb{R}^N$. We now consider the case where $t $ approaches infinity. Thus we have the ODE
\begin{equation}\label{t_infty_case}
    -a''_k(r)-\frac{(N-1)a'_k}{r} =0
\end{equation}
where $a(1)=0$. We have the integrating factor as $\mu_t(r)= r^{(N-1)}$, thus, we have
 $\frac{d}{d\tau}\left(\mu_t(\tau)a'_t{(\tau)}\right)= 0.$
Now by solving this equation, we obtain
\begin{align*}
    a(r)=C \int_r^1 \frac{1}{s^{N-1}}ds= \frac{C}{2-N}s^{2-N} \bigg|_r^1= C\left(\frac{1}{2-N}-\frac{r^{2-N}}{2-N}\right).
\end{align*}
We know $a(r)$ needs to satisfy $r^\sigma|a_t(r)|\le C_1$ for some positive constant $C_1$.
So we should have \[r^{\sigma}\left|C\left(\frac{1}{2-N}-\frac{r^{2-N}}{2-N}\right)\right|=\left|C\left(\frac{r^{\sigma}}{2-N}-\frac{r^{\sigma+2-N}}{2-N}\right)\right|\le C_1.\]
Noting that $\sigma=\frac{2-p}{p-1}>0 $ and $\frac{N}{N-1}<p<2$, by a computation we can see that $\sigma+2-N$ is negative.
So for $a_t(r)$ to satisfy the required bound when $r$ approaches zero, we should have $C=0.$ This shows that $a_t(r)=0$ when $t $ goes to infinity and so we should have $\phi=0$.\\
\noindent
 Thus the lemma is complete and we proved if  $\kappa= p$  and $ \phi \in X_1$ is such that $L_t^p(\phi)=0$ in $B_R \backslash \{0\}$ with $\phi=0$ on $\partial B_R$ in the case of $R$ finite then $\phi=0$ and when  $\kappa=-p$ and  $ \phi \in X$ is such that $L_t^{-p}(\phi)=0$ in $B_R \backslash \{0\}$ with $\phi=0$ on $\partial B_R$ in the case of $R$ finite then $\phi=0$ and in case of $k=0 $ and $R$ infinite $\phi$ is a constant.
\end{proof}
\subsection{Linear theory of $L_t^\pm$ on $X_1$; A priori estimate}
\begin{theorem}\label{apriori_theorem} There is some positive constant $C$ such that for all positive $t_m $,  and functions $\phi_m  \in X_1$ and $f^m \in Y_1$ we have
\begin{equation} 
\left\{
\begin{array}{lr}
 L_{t_m}^\pm(\phi_m)=f^m& \text{in}~~ B_1 \backslash \{0\},\\
\phi_m =0 & \text{on}~~ \partial B_1.
\end{array}
\right.
\end{equation}
One has the estimate $\|\phi_m\|_X \leq C\|f^m\|_Y $. 
\end{theorem}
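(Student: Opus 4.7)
The plan is to argue by contradiction using a blow-up / rescaling compactness argument, and to close the contradiction by invoking the kernel statement in Lemma \ref{kernel_Lt}. I would suppose the claimed estimate fails; then after extraction there exist $t_m \ge 0$, $\phi_m \in X_1$, and $f^m \in Y_1$ satisfying $L_{t_m}^\pm(\phi_m) = f^m$ in $B_1 \setminus \{0\}$ with $\phi_m = 0$ on $\partial B_1$, normalized so that $\|\phi_m\|_X = 1$ and $\|f^m\|_Y \to 0$. By the definition of the $X$-norm I would select $x_m \in B_1 \setminus \{0\}$ with
\[
|x_m|^\sigma |\phi_m(x_m)| + |x_m|^{\sigma+1}|\nabla \phi_m(x_m)| \ge \tfrac{1}{2},
\]
and set $r_m := |x_m|$, $y_m := x_m/r_m \in S^{N-1}$.

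Next I would rescale via $\tilde\phi_m(y) := r_m^{\sigma}\phi_m(r_m y)$ and $\tilde f^{\,m}(y) := r_m^{\sigma+2} f^m(r_m y)$ on $B_{1/r_m} \setminus \{0\}$. A direct scaling computation (of the same type already appearing in the proof of Lemma \ref{kernel_Lt}) would yield, with $s_m := t_m r_m^{\xi-1}$,
\[
L_{s_m}^{\pm}(\tilde\phi_m) = \tilde f^{\,m} \text{ in } B_{1/r_m} \setminus \{0\}, \qquad \tilde\phi_m = 0 \text{ on } \partial B_{1/r_m},
\]
together with $\|\tilde\phi_m\|_X = 1$, $\|\tilde f^{\,m}\|_Y \le \|f^m\|_Y \to 0$, and $\tilde\phi_m \in X_1$ (the characterization $\int_{S^{N-1}} \phi(r,\cdot)\,d\theta = 0$ is manifestly scale-invariant), with the pointwise nontriviality $|\tilde\phi_m(y_m)| + |\nabla \tilde\phi_m(y_m)| \ge \tfrac{1}{2}$ preserved at $y_m \in S^{N-1}$.

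I would then pass to subsequential limits $y_m \to y_\infty$ with $|y_\infty|=1$, $r_m \to r_\infty \in [0,1]$, and $s_m \to s_\infty \in [0,\infty]$. Because the drift coefficient $p/(s_m |y|^{\xi-1}+\beta)$ is locally bounded away from $y=0$ uniformly in $m$, interior (and, when $r_\infty \in \{0,1\}$, boundary) Schauder estimates yield $C^{1,\alpha}_{\mathrm{loc}}$ convergence $\tilde\phi_m \to \tilde\phi_\infty$ on the limit domain $\Omega_\infty$, which equals $B_{1/r_\infty}\setminus\{0\}$ when $r_\infty > 0$ and $\mathbb{R}^N \setminus \{0\}$ when $r_\infty = 0$. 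The limit $\tilde\phi_\infty$ lies in $X \cap X_1$ by Fatou and the preserved integral condition, solves $L_{s_\infty}^{\pm}(\tilde\phi_\infty) = 0$ on $\Omega_\infty$ (interpreting $L_\infty^{\pm}=-\Delta$ when $s_\infty = \infty$, exactly the limiting operator already treated in Lemma \ref{kernel_Lt}), vanishes on the appropriate boundary, and retains $|\tilde\phi_\infty(y_\infty)| + |\nabla \tilde\phi_\infty(y_\infty)| \ge \tfrac{1}{2}$. Lemma \ref{kernel_Lt}, which is stated precisely to cover $0 \le s \le \infty$ and $0 < R \le \infty$ in parallel, then forces $\tilde\phi_\infty \equiv 0$, a contradiction.

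The main obstacle will not be any single step but the bookkeeping across regimes: the rescaling produces several genuinely different limiting problems depending on whether $r_m \to 0$, $r_\infty \in (0,1)$, or $r_\infty = 1$, and whether $s_m$ stays bounded or tends to infinity (in the latter case the drift term degenerates and the limit is harmonic). The reason the argument will close uniformly is that Lemma \ref{kernel_Lt} is designed to handle every such limiting problem simultaneously; everything else is routine, namely standard elliptic regularity to pass to the limit and the closed-subspace characterization of $X_1$ used earlier to transfer the constraint $\int_{S^{N-1}}\phi(r,\cdot)\,d\theta = 0$ to $\tilde\phi_\infty$.
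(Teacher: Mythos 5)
Your proposal is correct in substance and runs on the same engine as the paper's proof: contradiction, normalization $\|\phi_m\|_X=1$, $\|f^m\|_Y\to 0$, rescaling about well-chosen points, elliptic estimates plus a diagonal argument to extract a limit, transfer of the zero-spherical-average condition to the limit, and a contradiction with the kernel results. The genuine difference is organizational: you blow up at a point nearly attaining the \emph{full} $X$-norm (zeroth-order plus gradient term), so the nonzero kernel element is itself the contradiction; the paper instead tracks only the gradient seminorm at the blow-up point (its Case 1 with $|x_m|$ bounded away from zero and Case 2 with $|x_m|\to 0$ via $\zeta_m(z)=s_m^\sigma\phi_m(s_mz)$), concludes that $\sup_{0<|x|\le 1}|x|^{\sigma+1}|\nabla\phi_m|\to 0$, and then recovers $\sup|x|^{\sigma}|\phi_m|\to 0$ by integrating $\nabla\phi_m$ radially in from $\partial B_1$, contradicting the normalization. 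Your route buys a shorter argument with no radial-integration step and a uniform treatment of the regimes $r_\infty=0$, $r_\infty\in(0,1)$, $r_\infty=1$ (the last requiring $C^{1,\alpha}$ estimates up to $\partial B_{1/r_m}$, which the paper also uses implicitly on $\bar A_k$, and there the nontriviality must come from the gradient since the limit vanishes on the boundary); the paper's route only needs information about the gradient at the blow-up point. One caveat: your assertion that Lemma \ref{kernel_Lt} covers every limiting problem ``in parallel'' is stronger than its literal statement --- for $\kappa=+p$ it asserts triviality only for finite $R$, and the harmonic limits $s_\infty=\infty$ are not verbatim in the lemma; the paper supplies these cases by citing \cite{AghajaniA.2021Sepi} and by explicit spherical-harmonic/Euler ODE computations inside the proof of Theorem \ref{apriori_theorem}, and your argument should be understood as doing the same, so this is a presentational point rather than a gap.
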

\begin{proof} If we assume the result is false, then by passing to a subsequence (without renaming) there is some $C_m >0$, $t_m \geq 0 , f^m \in Y_1$ and $\phi_m \in X_1$ with 

\begin{equation}\label{main-equation-estimate}
\left\{
\begin{array}{lr}
-\Delta \phi_m + \dfrac{\kappa}{(t_m|x|^{\xi-1}+\beta )}  \dfrac{x \cdot \nabla \phi_m}{|x|^2} =f^m & \text{in}~~ B_1 \backslash \{0\},\\
\phi_m=0, & \text{on}~~ \partial B_1,
\end{array}
\right.
\end{equation}
 and  
\[\|\phi_{m}\|_X > C_m\|f^m\|_Y.\]  
By normalizing, we get $\|\phi_{m}\|_X=1$ and $\|f^m\|_Y \rightarrow 0$.\\ We claim \begin{equation}\label{apriori_claim}
      \underset{0<|x|\leq 1}{\sup}  \{ |x|^{\sigma+1} |\nabla \phi_m|\} \rightarrow 0.
\end{equation}
We will show that proving this claim results in: $  \underset{0<|x|\leq 1}{\sup}  \{ |x|^{\sigma} |\phi_m|\} \rightarrow 0. $\\
\textit{proof of the claim.} Suppose there is some $0< | x_m| < 1$ and $\epsilon_0 >0$  such that 
\[\epsilon_0 \leq  |x_m|^{\sigma+1} |\nabla \phi_m(x_m)| \leq 1.\]
 There are two cases that should be considered. Either $|x_m|$ could be bounded away from zero or it could be approaching zero.\\
\noindent
$\bullet$ Set $\kappa=p$. 
Now we prove the first case for $L_{t_m}^+$.\\
\noindent
\textbf{Case 1}: Assume $|x_m|$ is bounded away from zero.  Define $A_k$ and $\tilde{A_k}$ for  $k \geq 2$ as
\[A_k= \Bigl\{ x \in B_1 : \frac{1}{k} <|x|<1 \Bigl\} \quad \text{and}  \quad \tilde{A_k}= \Bigl\{ x \in B_1 : \frac{1}{2k} <|x|<1 \Bigl\}. \]
Note that $A_k \subset \tilde{A_k}$. 
 We have
\[ -\Delta \phi_m =f^m- \dfrac{p}{(t_m|x|^{\xi-1}+\beta )}  \dfrac{x \cdot \nabla \phi_m}{|x|^2} \quad ~ \text{in} ~~\tilde{A_k}. \]
Set $g^m \coloneqq f^m- \dfrac{p}{(t_m|x|^{\xi-1}+\beta )}  \dfrac{x \cdot \nabla \phi_m}{|x|^2} $.
We can see that
\begin{align}\label{g_m first part inequaluty}
\bigg| \dfrac{p}{(t_m|x|^{\xi-1}+\beta )}  \dfrac{x \cdot \nabla \phi_m}{|x|^2} \bigg| \leq & \dfrac{p}{(t_m|x|^{\xi-1}+\beta )} \big| \dfrac{x \cdot \nabla \phi_m}{|x|^2}\big|  \\ \leq \nonumber & \dfrac{p}{\beta }  \dfrac{\|\phi_m\|_X}{|x|^{\sigma+2}}=\dfrac{p}{\beta }  \dfrac{1}{|x|^{\sigma+2}} \leq \frac{p}{\beta} (2k)^{2+\sigma} \coloneqq \gamma_k.
\end{align}
Also we find that,\begin{equation}\label{f_m inequality}
    \underset{x \in \tilde{A}_k}{\sup} |f^m| = \underset{x \in \tilde{A}_k}{\sup} \frac{|x|^{\sigma+2} |f^m|}{|x|^{\sigma+2} }\leq \big(\underset{x \in \tilde{A}_k}{\sup} \frac{\|f^m\|_Y}{|x|^{\sigma+2}} \big) \leq (2k)^{2+\sigma} \|f^m\|_Y = \gamma_k \|f^m\|_Y
\end{equation}
which by \eqref{g_m first part inequaluty} and 
\eqref{f_m inequality} we can see that $g_m $ is bounded in $\Tilde{A}_k$. So there exists a positive constant $C$ such that
\[\|g^m(x)\|_{L^\infty (\tilde{A}_k)}\leq C\]
meaning
\[ \| \Delta \phi_m \|_{L^\infty (\tilde{A}_k)}\leq C.\]
By elliptic regularity, we can say that for some $0< \lambda< 1$, there exists $C_1>0$ such that
\[ \| \phi_m \|_{C^{1, \lambda} (\bar{A_k})}\leq C_1. \] 
Thus $\{\phi_m\}_m$ is a bounded sequence in $C^{1, \lambda} (\bar{A_k})$  for all $k\geq 2$.  By the standard compactness argument and a diagonal argument there exists some subsequence $\{\phi_{m_i}\}_{i} \subset \{\phi_m\}_m$  and  $\phi \in  C^{1, \frac{\lambda}{2}} (\bar{A_k})$  such that

\[ \phi _{m_i} \rightarrow \phi \quad C^{1, \frac{\lambda}{2}} (\bar{A_k}). \] 
So we can write
\[ \phi _{m_i} \rightarrow \phi \quad ~  C^{1, \frac{\lambda}{2}}_{\textit{loc}} (\bar{B_1}\backslash \{0\}) .\]
Suppose $t_m$ converges to some  $t \in [0,\infty]$ and by passing the limit we see that when $t \in [0,\infty)$, $\phi$ solves 
\[-\Delta \phi + \dfrac{p}{(t|x|^{\xi-1}+\beta )}  \dfrac{x \cdot \nabla \phi}{|x|^2} =0  \quad ~ \text{in}~~ B_1 \backslash \{0\} \]
with $\phi =0$ on $\partial B_1$.  When $t \to \infty$, we get
\[-\Delta \phi  =0  \quad ~ \text{in} \ B_1 \backslash \{0\} \]
with $\phi =0$ on $\partial B_1$.
Using the completeness of $\mathbb{R}^N$, we can pass to a subsequence and so $x_m $ converges to some $x_0$ such that  $|x_0|$ is bounded away from zero. We can now pass the limit in \eqref{case1_inequality} to see that $|x_0|^{\sigma+1}|\nabla \phi(x_0)| \geq \epsilon_0$ and this means $\phi \neq 0$.

\noindent We need to show that $\phi$ is in $X_1$. So we need to show that $ \phi $ belongs to $X$ and it has no $k=0$ mode.
\noindent
We showed that for every fixed $0<|x|<1$,  we have $|x|^{\sigma+1} |\nabla \phi_{m_i}(x)|\leq 1$. So by passing to the limit, we get $|x|^{\sigma+1} |\nabla \phi|\leq 1$. Thus by integration, we can show that we have $|x|^{\sigma} | \phi(x)|\leq 1$, so we get
\[\underset {0<|x|\leq 1} {\sup} \left\{ |x|^{\sigma} | \phi(x)|+  |x|^{\sigma+1} |\nabla \phi(x)| \right\} <\infty.  \]
This means that $\phi $ is in $X$. First note that since $\phi_m$ belong to $X_1$ for all $0<r\le 1$, we can write 
\begin{equation}
    \label{zero average}\int_{|\theta|=1 }\phi_m(r\theta)d\theta=\sum_{k=1}^{\infty} a_k(r) \int_{|\theta|=1} \psi _{k,m}(\theta) . 1 d\theta=0 . 
\end{equation} This shows that $\phi_m$ has  zero average over all the sphere for radius $0<r\ \leq 1$. Now we use the convergence we obtained above, and we can write
 for all $0<r\leq 1$ we have that $\theta \mapsto \phi_m (r \theta) $ converges uniformly on $S^{N-1}$ to $\theta \mapsto \phi(r \theta)$. Thus for all $0<r\leq 1$ we have
\[0=\int_{|\theta|=1 }\phi_m(r\theta)d\theta \rightarrow \int_{|\theta|=1 }\phi(r\theta)d\theta. \]
\noindent So $\phi $ also has zero average over all the sphere for radius $0<r\ \leq 1$ and hence $\phi $ is in $ X_1$. This means that $\phi \in X_1$ is nonzero. Thus for $t \in [0,\infty)$ we have a contradiction with the results from the previous lemmas. We now show that in case of $t \to \infty$  we also get a contradiction.
We saw that when $t=\infty$, we have
\begin{equation}\label{L_tequation_t=infty}
\left\{
\begin{array}{lr}
\Delta \phi =0  &~~ \text{in} ~~ B_1 \backslash \{0\},\\
\phi=0 & \text{on}~~ \partial B_1.

\end{array}
\right.
\end{equation}
\noindent
Then we write $\phi = \sum_{k=1}^{\infty} a_k(r)\psi_k (\theta)$, and
 solving the equation gives us
\[a_k= C_k r^{\gamma_k^+} +D_k r^{\gamma_k^-}\] 
where
\[ \gamma_k ^{\pm}= \frac{-(N-2)}{2} \pm \frac{\sqrt{(N-2)^2+4 \lambda_k}}{2}.\]
In Lemma \ref{kernel_Lt}, we showed that $\gamma_k^- +\sigma<0<\gamma_k^+ +\sigma$ for all $k \geq 1$. Hence, to have $\phi$ in the required space, we must have $C_k=D_k=0$ and so $\phi=0$. It is a contradiction with $\phi$ being nonzero.
\\
\noindent
\textbf{Case 2:} In this case, we assume there is some $\{ x_m\}$ such that $|x_m| \rightarrow 0$ and $|x_m|^{\sigma+1} |\nabla \phi_m(x_m)| \geq \epsilon_0 >0$. Set $s_m=|x_m|$, so we have $s_m \rightarrow 0$. Define $z_m \coloneqq s_m^{-1}x_m$ a sequence  and note that by definition  $|z_m|=1$. Thus,
\begin{equation}\label{sm_bound}
    |z_m|^{\sigma+1}s_m^{\sigma+1}|\nabla \phi_m(s_mz_m)|=s_m^{\sigma+1}|\nabla \phi_m(s_mz_m)| \geq \epsilon_0 >0.
\end{equation}
Define $\zeta_m(z) \coloneqq s_m^\sigma \phi_m (s_mz)$ for $0<|s_m z|< 1$. By \eqref{sm_bound}  \begin{equation} \label{zeta_inequality}
    |\nabla \zeta_m (z_m) |=s_m^{\sigma+1}|\nabla \phi_m(s_mz_m)| \geq \epsilon_0
\end{equation} and also  we have the bounds
\begin{equation}\label{zeta_bounds}
    |z|^\sigma|\zeta_m (z)| \leq 1, \quad \text{and} \quad |z|^{\sigma+1} |\nabla \zeta_m (z)| \leq 1.
\end{equation}
We can write
\[- \Delta \phi_m (s_mz) +\frac{p}{\beta +t_m|s_mz|^{\xi -1 }}\frac{s_mz. \nabla \phi_m(s_mz)}{|s_mz|^2}=f^m(s_mz).\]
Using our definition, we can obtain  $\Delta \zeta_m(z)= s_m^{\sigma+2} \Delta \zeta_m(s_mz)$, and  $\nabla \zeta_m(z) =s_m^{\sigma +1}\zeta_m (s_mz)$. Thus a computation shows that \begin{equation}\label{case2_equation}
    - \Delta \zeta_m (z) +\frac{p}{\beta +t_ms_m^{\xi-1}|z|^{\xi -1 }}\frac{z. \nabla \zeta_m(z)}{|z|^2}=s_m^{\sigma+2}f^m(s_mz).
\end{equation}
Note that by setting $g^m(z)\coloneqq s_m^{\sigma+2}f^m(s_mz)$, we showed that \[L^+_{t_ms_m^{\xi-1}}(\zeta_m(z))=g^m(z) \quad \text{in}\quad B_{\frac{1}{s_m}}=E_m=\big\{z: 0<|z|<\frac{1}{s_m} \big\}\] with $\zeta_m=0$ on $\partial E_m$.  Thus
\[- \Delta \zeta_m (z) =g^m(z)- \frac{p}{\beta +t_ms_m^{\xi-1}|z|^{\xi -1 }}\frac{z. \nabla \zeta_m(z)}{|z|^2} \quad \text{in}\quad E_m.\]
We define $A_k$ and $\tilde{A_k}$ for $k \geq 2$ as 
\[A_k= \Bigl\{ z \in B_1 : \frac{1}{k} <|z|<k \Bigl\} \quad \text{and} \quad \tilde{A_k}= \Bigl\{ z \in B_1 : \frac{1}{2k} <|z|<2k \Bigl\}. \]
We have that $A_k\subset \Tilde{A_k}$ and also  $\Tilde{A_k} \subset E_m$ for $m$ big enough. Thus we obtained
\[- \Delta \zeta_m (z) =g^m(z)- \frac{p}{\beta +t_ms_m^{\xi-1}|z|^{\xi -1 }}\frac{z. \nabla \zeta_m(z)}{|z|^2} \quad \text{in}\quad \Tilde{A_k}.\]
First note that \begin{equation}   \label{g_m inequality}
     \underset{z \in \tilde{A_k}}{\sup} |g^m| = \underset{z \in \tilde{A_k}}{\sup} s_m^{\sigma+2}|f^m(s_mz)|  =\underset{z \in \tilde{A_k}}{\sup} \frac{|s_m|^{\sigma+2}|z|^{\sigma+2} |g^m|}{|z|^{\sigma+2} } \leq \big(\underset{z \in \tilde{A_k}}{\sup} \frac{\|f^m\|_Y}{|z|^{\sigma+2}} \big) \leq (2k)^{2+\sigma} \|f^m\|_Y.
\end{equation}
Also we have
\begin{align} \label{G inrqulaity}
\nonumber  &\bigg| \dfrac{p}{(t_ms_m^{\xi-1}|z|^{\xi -1 }+\beta )}  \dfrac{z \cdot \nabla \zeta_m}{|z|^2} \bigg|  \leq \nonumber \dfrac{p}{(t_ms_m^{\xi-1}|z|^{\xi -1 }+\beta )}  \dfrac{|z|^{\sigma+1}  |\nabla \zeta_m|}{|z|^{\sigma+2}} \\ \le \nonumber&\dfrac{p}{(t_ms_m^{\xi-1}|z|^{\xi -1 }+\beta )}  \dfrac{|z|^{\sigma+1}|s_m|^{\sigma+1}  |\nabla \phi_m(s_mz)|}{|z|^{\sigma+2}}  \leq  \dfrac{p}{\beta }  \dfrac{\|\phi_m\|_X}{|z|^{\sigma+2}}=\dfrac{p}{\beta }  \dfrac{1}{|z|^{\sigma+2}} \leq \frac{p}{\beta} (2k)^{2+\sigma} \coloneqq \gamma_k.
\end{align}
Set $G=g^m(z)- \frac{p}{\beta +t_ms_m^{\xi-1}|z|^{\xi -1 }}\frac{z. \nabla \zeta_m(z)}{|z|^2}$.
 By \eqref{g_m inequality}, we can show that $G $ is bounded in $\Tilde{A_k}$. Thus, there exists a $C>0$ such that
$\|G\|_{L^\infty (\tilde{A}_k)}\leq C$
and since $-\Delta \zeta_m= G$, we have
 $\| \Delta \zeta_m \|_{L^\infty (\tilde{A}_k)}\leq C.$
By the elliptic regularity, we can say that for $0< \lambda< 1$ there exists a positive constant $C_1$ such that
\[ \| \zeta_m \|_{C^{1, \lambda} (\bar{A}_k)}\leq C_1. \] 
Thus $\{\zeta_m\}_m$ is a bounded sequence in $C^{1, \lambda} (\bar{A_k})$  for all $k\geq 2$.  By standard compactness argument and a diagonal argument there exist some subsequence $\{\zeta_{m_i}\}_{i} \subset \{\zeta_m\}_m$  and  $\zeta \in  C^{1, \frac{\lambda}{2}} (\bar{A_k})$  such that
$ \zeta _{m_i} \rightarrow \zeta$ in $ C^{1, \frac{\lambda}{2}} (\bar{A_k})$  
so we get
\[ \zeta _{m_i} \rightarrow \zeta \quad ~  C^{1, \frac{\lambda}{2}}_{\textit{loc}} (\mathbb{R}^N\backslash \{0\}) .\]
Suppose that $t_m s_m^{\xi-1}$ converges to some $t \in [0,\infty]$. By passing the limit in \eqref{case2_equation} we see that when $t<\infty$, $\zeta$ solves 
\[-\Delta \zeta + \dfrac{p}{(t|z|^{\xi-1}+\beta )}  \dfrac{z \cdot \nabla \zeta}{|z|^2} =0  \quad ~ \text{in}~~ \mathbb{R}^N \backslash \{0\} \] and in the case of $t=\infty$, we get 
\begin{equation}
\Delta \zeta =0  ~~ \text{in} ~~ \mathbb{R}^N \backslash \{0\}.
\end{equation}
Using the completeness of $\mathbb{R}^N$, we can pass to a subsequence such that $z_m \rightarrow z_0$ with $z_0$ bounded away from zero  and $|z_0|=1$. We can now pass the limit in \eqref{zeta_inequality} to see that  $|\nabla \zeta(z_0)| \geq \epsilon_0$ and this means that $\zeta$ is nonzero.
We now need to show that $\zeta $ is in $X_1$. So we need to show that $ \zeta $ belongs to $X$ and it has no $k=0$ mode. We showed that for every fixed $0<|z|<1$, we have $|z|^\sigma|\zeta_m (z)| \leq 1,$ and $|z|^{\sigma+1} |\nabla \zeta_m (z)| \leq 1$, so
by passing the limit in these two expressions, we get $|z|^{\sigma+1} |\nabla \zeta(x)|\leq 1$ and  $|z|^{\sigma} |\zeta|\leq 1$ in $\mathbb{R}^N \backslash\{0\}$. Thus, $\zeta$ is in $X$. We know that  $\zeta_m$ has no $k=0$ mode so $\zeta_m \in X_1$. With a similar approach to case 1, we can show that since $\zeta_m$ is in $X_1$, it has zero average over the sphere with radius $0<r\leq 1$.  With the convergence that we have obtained, we find that $\zeta $ has also zero average over the sphere and so $\zeta$ is also in $X_1$. 
 This shows that $\zeta \in X_1$ is nonzero and it satisfies $L_t(\zeta)=0$  in $\mathbb{R}^N \backslash\{0\}$  which is a contradiction with the  kernel results we obtained before.
\noindent

\noindent
\noindent
When $t=\infty$, we have
\begin{equation}
\Delta \zeta =0  ~~ \text{in} ~~\mathbb{R}^{N}\backslash \{0\}.
\end{equation}
We can write $\zeta = \sum_{k=1}^{\infty} a_k(r)\psi_k (\theta)$.
 Solving the equation gives us
\[a_k= C_k( r^{\gamma_k^+}) +D_k( r^{\gamma_k^-})\] 
where $\gamma_k ^{\pm}= \frac{-(N-2)}{2} \pm \frac{\sqrt{(N-2)^2+4 \lambda_k}}{2}$.
In Lemma \ref{kernel_Lt}, we showed that ${\gamma_k^+ + \sigma}  $ is positive and ${\gamma_k^- +\sigma}$ is negative and they are both nonzero and distinct. Hence, to have $a_k$ in the required space, we should have $C_k=D_k=0$. So $\zeta=0$ and it is a contradiction with $\zeta$ being nonzero.
\\
\noindent
The results from case 1 and case 2 complete the proof of the claim we made in \eqref{apriori_claim} which means we have
\[\underset {0<|x|\leq 1}{\sup} |x|^{\sigma+1} | \phi_m(x)| \rightarrow 0.\]
$\hfill \Box$\\
As we mentioned before, this result gives us
\begin{equation}\label{integrated estimatw}
    \underset{0<|x|\leq 1}{\sup}  \{ |x|^{\sigma} |\phi_m|\} \rightarrow 0.
\end{equation}
 To show this, fix $0<|x|<1$ and let $\hat{x}$ be a point on the boundary of $B_1 \backslash\{0\}$. Also, let $t_1= \frac{1}{|x|}$ such that  $g(t)= \phi_m(tx)$. Then, $g'(t)= \nabla \phi_m \cdot x$ and this gives us
 \[\left|g(t_1)-g(1)\right| \le \int_1^{t_1}|g'(t)| \ dt.\]Thus, we can write 
 \begin{align} \label{integration inequality}
     |\phi_m(\hat{x})- \phi_m(x)|= |\phi_m(x)| \le \int_1^{t_1}|\nabla\phi_m(tx)| |x| \ dt.
 \end{align}
We showed that 
$|\nabla \phi_m (z)| |z|^{\sigma+1} \le \epsilon \ \text{for} \ 0<|z|\le 1$
 and we can write  \begin{align} \label{integration second inequality}
     |\nabla \phi_m(tx)|t^{\sigma+1}|x|^{\sigma+1} \le \epsilon.
 \end{align}
 By \eqref{integration inequality} and \eqref{integration second inequality}, we find that
 \begin{align*}
     |\phi_m(x)| \leq \int_1^{t_1} \frac{\epsilon |x|}{t^{\sigma+1}|x|^{\sigma+1}} \ dt =\frac{\epsilon}{|x|^{\sigma}}\int_1^{t_1} \frac{1}{t^{\sigma+1}}\ dt =\frac{\epsilon}{|x|^{\sigma}} \big[ \frac{t^{-\sigma}}{-\sigma} \big]^{t_1}_1=\frac{\epsilon}{|x|^{\sigma} \sigma} \big[ \frac{-1}{t_1^{\sigma}}+1 \big] 
 \end{align*}
 so we have \begin{align}
     \label{integration conclusion}
     |x|^{\sigma}|\phi_m(x)| \le \frac{\epsilon}{\sigma} [1-\frac{1}{t_1^{\sigma}}] \le \frac{\epsilon}{\sigma}.
 \end{align}
When $\epsilon $ goes to zero, we get that $|\phi_m(tx)|t^{\sigma+1}|x|^{\sigma+1}$ approaches zero. By \eqref{integration conclusion} we also find that $|x|^{\sigma}|\phi_m(x)| $ goes to zero.
 Thus we have \[\underset{0<|x|\leq 1}{\sup} \{|x|^{\sigma+1}|\nabla\phi_m(x)|+|x|^{\sigma}|\phi_m(x)|\}\to  0. \]
 This is a contradiction with $\|\phi\|_X=1$ .

 $\hfill \Box$\\
$\bullet$ Set $\kappa=-p$ such that
$\{\phi _m\}_m \subset X_1$, $f^m \in Y_1$ satisfy
\begin{equation}\label{case2_equation k=-p}
\left\{
\begin{array}{lr}
-\Delta \phi_m - \dfrac{p}{(t_m|x|^{\xi-1}+\beta )}  \dfrac{x \cdot \nabla \phi_m}{|x|^2} =f^m & \text{in}\quad B_1 \backslash \{0\},\\
\phi_m=0, & \text{on}\quad \partial B_1,
\end{array}
\right.
\end{equation} and we have the estimate
\[\|\phi_{m}\|_X > C\|f^m\|_Y \]
such that \begin{equation}\label{case1 K=-p inequality}
\epsilon_0 \leq  |x_m|^{\sigma+1} |\nabla \phi_m(x_m)| \leq 1.
\end{equation}
We will skip the proof as it would be very similar to the Other case.\\
\end{proof}

For the case $\kappa=+p$ one can use a continuation argument along with Theorem \ref{apriori_theorem} and the studies that has been done on $L_0$ in \cite{AghajaniA.2018Ssoe} and \cite{AghajaniA.2021Sepi} to show the following. At this part, we skip the details on the continuation argument in the case of $ \kappa=-p$ and they will be studied later in Lemma \ref{t=0 k nonzero mode} and Lemma \ref{k=0 mode lemma} .

\begin{corollary}
There exists a positive constant $C$ such that for all $f \in Y_1$ there is some $\phi \in X_1$ such that $L_t^+(\phi)=f$ in $B_1 \backslash \{0\}$ with $\phi=0$ on $\partial B_1$ and $\|\phi \|_X \leq C\|f\|_Y$. 
\end{corollary}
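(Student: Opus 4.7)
The plan is to apply the method of continuity to the one-parameter family of bounded linear operators $L_t^+:X_1\to Y_1$, parametrized by $t\in[0,\infty)$. The two main inputs are the uniform a priori estimate from Theorem \ref{apriori_theorem}, with a single constant $C$ valid across all $t \ge 0$, and solvability at the endpoint $t=0$, which is established in \cite{AghajaniA.2018Ssoe,AghajaniA.2021Sepi}. Since Lemma \ref{kernel_Lt} already guarantees injectivity of $L_t^+$ on $X_1$ for every $t \ge 0$, it suffices to show that the set $S := \{t\ge 0 : L_t^+ \text{ is surjective from } X_1 \text{ onto } Y_1\}$ equals $[0,\infty)$, and I would do this by verifying that $S$ is nonempty, open, and closed in the connected set $[0,\infty)$.

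Nonemptiness is the content of the cited works, giving $0\in S$. For openness, fix $t_0 \in S$; the open mapping theorem yields a bounded inverse $(L_{t_0}^+)^{-1}: Y_1 \to X_1$. A direct computation gives
\[
(L_t^+ - L_{t_0}^+)\phi \;=\; \frac{p\,(t_0-t)\,|x|^{\xi-1}}{(t|x|^{\xi-1}+\beta)(t_0|x|^{\xi-1}+\beta)}\,\frac{x\cdot\nabla\phi}{|x|^2},
\]
and using $|x|\le 1$, $\xi>1$, and the lower bound $t|x|^{\xi-1}+\beta\ge\beta$, one obtains the operator-norm estimate $\|L_t^+-L_{t_0}^+\|_{X_1\to Y_1}\le (p/\beta^2)|t-t_0|$. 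Factoring $L_t^+=L_{t_0}^+\bigl(I+(L_{t_0}^+)^{-1}(L_t^+-L_{t_0}^+)\bigr)$ and applying the Neumann series to the small perturbation then shows $L_t^+$ is invertible for all $t$ sufficiently close to $t_0$, so $S$ is open.

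For closedness, let $t_n\in S$ with $t_n\to t^*$, fix $f\in Y_1$, and let $\phi_n\in X_1$ be the unique solution of $L_{t_n}^+\phi_n=f$. Theorem \ref{apriori_theorem} yields the uniform bound $\|\phi_n\|_X\le C\|f\|_Y$. Repeating the annular elliptic regularity argument used in the proof of Theorem \ref{apriori_theorem} (on $\tilde A_k=\{1/(2k)<|x|<1\}$ the drift coefficient is bounded, so $\|\Delta\phi_n\|_{L^\infty(\tilde A_k)}$ is controlled and standard $C^{1,\lambda}$ estimates apply) followed by a diagonal extraction produces a subsequence converging in $C^1_{\mathrm{loc}}(\overline{B_1}\setminus\{0\})$ to some $\phi$ satisfying $\|\phi\|_X\le C\|f\|_Y$, $L_{t^*}^+\phi=f$ pointwise, and $\phi|_{\partial B_1}=0$. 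The zero-spherical-average characterization \eqref{closed subspace} of $X_1$ is preserved by local uniform convergence on each sphere of positive radius, so $\phi\in X_1$ and $t^*\in S$. The step I expect to be the main obstacle is this closedness argument: verifying that the limit actually belongs to $X_1$ (and not merely to $X$), and checking that the local-in-$\{|x|>0\}$ mode of convergence is strong enough to recover both the boundary condition on $\partial B_1$ and the full weighted norm bound on $\phi$ up to the origin.
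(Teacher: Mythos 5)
Your proposal is correct and follows essentially the same route as the paper: a continuation (nonempty/open/closed) argument in $t$ anchored at $t=0$ by the cited works, combined with the uniform a priori estimate of Theorem \ref{apriori_theorem}, the kernel result of Lemma \ref{kernel_Lt}, and the annular elliptic-regularity compactness for closedness --- precisely the argument the paper sketches for $\kappa=+p$ and carries out in detail for $L_t^-$ in the proof of Proposition \ref{main_result}. One small repair: since $L_{t_0}^+$ is not a bounded operator from $X_1$ to $Y_1$ (the $X$-norm does not control $\Delta\phi$), the open mapping theorem is not the right tool for the bounded inverse; instead the bound $\|(L_{t_0}^+)^{-1}f\|_X\le C\|f\|_Y$ follows directly from Theorem \ref{apriori_theorem} together with injectivity, after which your Neumann-series step (equivalently, the paper's contraction-mapping perturbation) goes through unchanged.
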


To get the desired result on the full space $ X$, we need to recombine it with the result for the $k=0$ mode.

\begin{lemma}\label{zeromode_lemma}($k=0$ mode for $L_t^+$) We are considering the case where $k=0$ in \eqref{ode_k_mode_b} and for all positive $t$, $a_t(r)$ (with  dependence on $t$) solves the equation.  We are also assuming \begin{equation}\label{b_r_equation}
    \underset{0<r\le1}{\sup} r^{\sigma+2} |b(r)| \leq 1
\end{equation}
thus we have 
\begin{equation}\label{a_t_equation}
- a''_t (r)  - \frac{(N-1)a_t'(r)}{r}+ \frac{ p a_t'(r)}{\beta r+ t r^\xi}=b(r)~~~~ 0<r< 1\end{equation}
with $a_t(1)=0$. Note that $a_t(r)$ also satisfies 
\begin{equation}
    \underset{0<|x|\le1}{\sup} \left\{ r^{\sigma} |a_t(r)|+r^{\sigma+1} |a'_t(r)| \right\} \leq C .
\end{equation}
\end{lemma}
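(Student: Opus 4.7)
The plan is to exploit the fact that \eqref{a_t_equation} has no zeroth-order term in $a_t$, so the substitution $v_t := a_t'$ reduces it to a first-order linear ODE
\[-v_t'(r) - \frac{N-1}{r}\,v_t(r) + \frac{p}{\beta r + tr^\xi}\,v_t(r) \;=\; b(r).\]
Using the partial fraction decomposition $\tfrac{1}{r(\beta + tr^{\xi-1})} = \tfrac{1}{\beta r} - \tfrac{tr^{\xi-2}}{\beta(\beta + tr^{\xi-1})}$, I would compute the integrating factor explicitly as
\[\mu_t(r) \;=\; r^{\,N-1-\frac{p}{\beta}}\bigl(\beta + tr^{\xi-1}\bigr)^{\frac{p}{p-1}},\]
so that the ODE rewrites as $-(\mu_t v_t)' = \mu_t b$. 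My proposal is to take the particular solution
\[v_t(r) \;=\; \frac{1}{\mu_t(r)}\int_r^1 \mu_t(s)\, b(s)\, ds, \qquad a_t(r) \;=\; -\int_r^1 v_t(s)\,ds,\]
which makes $a_t(1) = 0$ automatic. Any other particular solution differs by a constant multiple of the homogeneous solution $1/\mu_t(r)$ of the first-order equation for $v_t$, which may be added in if needed to enforce the bounds.

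The estimates rest on three algebraic identities coming from $\beta = \tfrac{p-1}{\xi-1}$, $\xi = (p-1)(N-1)$, and $\sigma + 2 = \tfrac{p}{p-1}$, namely $p/\beta = p(N-1) - (\sigma+2)$, $N - 1 - p/\beta - (\sigma+2) = -\xi$, and $1 - \xi - (N - 1 - p/\beta) = -\sigma - 1$. The first two give the clean form $\mu_t(s)\,s^{-\sigma-2} = s^{-\xi}(\beta + ts^{\xi-1})^{p/(p-1)}$. Using $|b(s)| \le s^{-\sigma-2}$, the substitution $u = ts^{\xi-1}$ transforms the key integral into
\[\Bigl|\int_r^1 \mu_t(s)\, b(s)\,ds\Bigr| \;\le\; \frac{t}{\xi-1}\int_{tr^{\xi-1}}^{t} (\beta+u)^{\frac{p}{p-1}}\,u^{-2}\,du,\]
and dividing by $\mu_t(r)$ while multiplying by $r^{\sigma+1}$ should produce cancellations that, after splitting into the two regimes $tr^{\xi-1} \lesssim \beta$ and $tr^{\xi-1} \gtrsim \beta$, give $r^{\sigma+1}|v_t(r)| \le C$. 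The bound on $a_t$ then follows by integration: $|a_t(r)| \le \int_r^1 |v_t(s)|\,ds \le C\int_r^1 s^{-\sigma-1}\,ds \le (C/\sigma)\,r^{-\sigma}$, yielding $r^\sigma|a_t(r)| \le C/\sigma$.

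The hard part will be making the regime analysis uniform in $t \in [0,\infty)$. Both $\mu_t(r)$ and $\int_r^1 \mu_t b$ grow with $t$, and the cancellation is clean only when the crossover radius $r_* \asymp (\beta/t)^{1/(\xi-1)}$ is handled with care: in the inner regime $r < r_*$ the integral is dominated by $\beta^{p/(p-1)} r^{1-\xi}/(\xi-1)$ (matching the $t=0$ analysis), while in the outer regime $r > r_*$ the integral scales like $t^{p/(p-1)}$ and must be balanced against the corresponding growth of $\mu_t(r)$. If the naive choice $v_t(1)=0$ leaves a residual that blows up with $t$ near $r_*$, I would subtract off a suitable multiple of the homogeneous solution $1/\mu_t(r)$ to cancel it. In the degenerate limit $t \to \infty$, $\mu_t(r)$ reduces after rescaling by $t^{-p/(p-1)}$ to the Laplacian factor $r^{N-1}$, the equation collapses to $-(r^{N-1}a_t')' = r^{N-1}b(r)$, and the required bound follows directly from $p > N/(N-1)$.
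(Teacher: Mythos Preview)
Your approach is the same as the paper's: reduce to the first-order ODE for $v_t=a_t'$, compute the integrating factor $\mu_t$, and split the estimate at the crossover radius. The one point where you are vague is exactly the one point that matters. Your naive choice $v_t(1)=0$ \emph{does} fail: with $\int_r^1$, the contribution from $[r_*,1]$ is of order $t^{p/(p-1)}$, and for $r$ just below $r_*$ one has $r^{\sigma+1}/\mu_t(r)\asymp r^{\xi-1}\beta^{-p/(p-1)}\asymp t^{-1}\beta^{-p/(p-1)}$, giving a bound $\asymp t^{1/(p-1)}$ that blows up. So your stated inner-regime bound $\beta^{p/(p-1)}r^{1-\xi}/(\xi-1)$ is not valid for the naive choice; it only becomes valid after the correction you allude to.

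The paper makes that correction explicit from the outset: it sets
\[
a_t'(1)=-\int_{R_t}^{1}\mu_t(\tau)b(\tau)\,d\tau,\qquad R_t^{\xi-1}t=1,
\]
so that $a_t'(r)=\dfrac{1}{\mu_t(r)}\displaystyle\int_r^{R_t}\mu_t(\tau)b(\tau)\,d\tau$. This is precisely ``subtracting a suitable multiple of $1/\mu_t$'' with the multiple equal to $\int_{R_t}^{1}\mu_t b$, and it replaces your endpoint $1$ by your crossover radius $r_*=R_t$. With this choice the two regimes decouple cleanly: for $r<R_t$ the integral runs over $[r,R_t]$ and your inner bound is then correct, while for $r>R_t$ it runs over $[R_t,r]$ and the $t^{p/(p-1)}$ growth cancels against $\mu_t(r)$ exactly as you predict. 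Once you pin down this constant, your outline and the paper's proof coincide.
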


\begin{proof}
    From \eqref{a_t_equation}, using the integrating factor $\mu_t(r)$, we get $-\frac{d}{d\tau}(\mu_t(\tau)a'_t{(\tau)})= \mu_t(\tau)b(\tau).$
Noting that $\mu_t(1)=1$, we can integrate both sides and we obtain
\begin{align*}-\int_s^1 \frac{d}{d\tau}(\mu_t(\tau)a'_t(\tau))d \tau= \int_s^1 \mu_t(\tau)b(\tau) d\tau
\Longrightarrow -(\mu_t(1)a'_t(1)-\mu_t(s)a'_t(s))=&\int_s^1 \mu_t(\tau)b(\tau) d\tau. 
\end{align*}
We can now deduce that
\begin{align*}
    a'_t(s)=&\frac{1}{\mu_t(s)}\bigg(a_t'(1)+\int_s^1 \mu_t(\tau)b(\tau) d\tau \bigg) .
\end{align*}\noindent
By integrating again with respect to $s$ from $r$ to $1$, and considering $a_t(1)=0$, we get
\[a_t(r)=-\int_r^1\frac{1}{\mu_t(s)}\bigg(a_t'(1)+\int_s^1 \mu_t(\tau)b(\tau) d\tau \bigg) ds. \]
\noindent
Set $a'_t(1) = -\int_{R_t}^{1}\mu_t(\tau)b(\tau)d \tau$, where $R_t^{\xi-1}t=1$ then we have
\begin{align*}
a'_t(r)=-(-1)\frac{1}{\mu_t(s)}\Big[-\int_{R_t}^{1}\mu_t(\tau)b(\tau)d \tau+ \int_r^1 \mu_t(\tau)b(\tau) d\tau \Big]=\frac{1}{\mu_t(s)}\int_r^{R_t}\mu_t(\tau)b(\tau)d \tau =-\frac{1}{\mu_t(r)}\int_{R_t}^{r} \mu_t (\tau) b(\tau) d\tau.
\end{align*}
So we can write $a_t$ as
\[a_t(r) = \int_r^1 \bigg(\frac{1}{\mu_t(s)}\int_{R_t}^s \mu_t(\tau)b(\tau) d\tau \bigg) ds, \quad 0<r\leq 1. \]
\noindent
We now need to check that $a_t$ satisfies the bounds independent of $t$ for large $t$.\\
We should consider two cases:  \textit{(i)} $0<r<R_t$, \textit{(ii)} $R_t<r \leq 1.$\\
\noindent
For $r<R_t$, we have
\begin{align*}
r^{\sigma+1} |a'_t(r)| &\leq  r^{\sigma+2-N+\frac{p}{\beta}} \bigg( \frac{tr^{\xi-1}+\beta}{t+\beta}\bigg)^{-(\frac{p}{p-1})} \int_r^{R_t} \mu_t (\tau)| b(\tau)| \  d\tau\\ & \leq  r^{\sigma+2-N+\frac{p}{\beta}} \bigg( \frac{tr^{\xi-1}+\beta}{t+\beta}\bigg)^{-(\frac{p}{p-1})} \int_r^{R_t} \frac{\tau^{\sigma+2} }{\tau^{\sigma+2}} \mu_t(\tau)| b(\tau) | \ d\tau.
\end{align*}
Note that $-\sigma-3+N-\frac{p}{\beta}=-\xi$
    , so by \eqref{b_r_equation}, we can write
    \begin{align*}
r^{\sigma+1} |a'_t(r)| \leq & \ r^{\xi-1} \bigg( \frac{t+\beta}{tr^{\xi-1}+\beta}\bigg)^{(\frac{p}{p-1})} \int_r^{R_t} \tau^{N-1-\frac{p}{\beta}}\bigg(\frac{t\tau^{\xi-1}+\beta}{t+\beta}\bigg)^{(\frac{p}{p-1})} \frac{|b(\tau)|\tau^{\sigma+2} }{\tau^{\sigma+2}}  d\tau \\ \leq & \  \bigg( \frac{1+\beta}{tr^{\xi-1}+\beta}\bigg)^{(\frac{p}{p-1})} \bigg( \frac{1-(\frac{R_t}{r})^{1-\xi}}{\xi-1} \bigg) \leq \ \bigg( \frac{\beta+1}{\beta}\bigg)^{\frac{p}{p-1}}\frac{1}{\xi-1}.
\end{align*}
Thus we proved
\begin{equation}\label{case r<R_t}
   r^{\sigma+1} |a'_t(r)| \leq  \bigg( \frac{\beta+1}{\beta}\bigg)^{\frac{p}{p-1}}\frac{1}{\xi-1}\quad \text{for}\quad 0<r<R. 
\end{equation}
\noindent
For $r>R_t$, there exists a constant $c_q>0$ such that we have  $(a+b)^q\leq c_q(a^q+b^q)$ for $q>1$ and by using this inequality and noting that $\xi-\frac{p(\xi-1)}{p-1}<0$, and $\xi>1$, we can write 
\begin{align*}
    r^{\sigma+1} |a'_t(r)| \leq & \frac{r^{\xi-1}}{(\beta+tr^{\xi-1})^{\frac{p}{p-1}}}\int_{R_{t}}^r \frac{(\beta+t \tau^{\xi-1})^{\frac{p}{p-1}}}{\tau^{\xi}} d\tau \leq \frac{C_1 \beta^{\frac{p}{p-1}} r^{\xi-1} R_t^{1-\xi}}{(\beta+tr^{\xi-1})^{\frac{p}{p-1}}}  + \frac{C_1 (tr^{\xi-1})^\frac{p}{p-1}}{(\beta+tr^{\xi-1})^{\frac{p}{p-1}}}
\end{align*}
where $C_1$ is a constant independent of $t$. Recall we have $ tR_t^{\xi-1}=1$, thus
\[r^{\sigma+1} |a_t'(r)| \leq \frac{C_1 \beta^{\frac{p}{p-1}} tr^{\xi-1}}{(\beta+tr^{\xi-1})^{\frac{p}{p-1}}}+\frac{C_1(tr^{\xi-1})^{\frac{p}{p-1}}}{(\beta+tr^{\xi-1})^{\frac{p}{p-1}}} \leq \frac{C_1 \beta^{\frac{p}{p-1}}}{(tr^{\xi-1})^{\frac{p}{p-1}}}+C_1.\]
Since for $r \geq R_t$ we have $tr^{\sigma-1} \geq tR_t^{\sigma-1}=1$, we can deduce that
\begin{equation}\label{case r>R_t}
    r^{\sigma+1}|a_t'(r)| \leq C_1 \beta^{\frac{p}{p-1}}+C_1 =C_1(1+\beta^{\frac{p}{p-1}})~~ \text{for}~~  r\geq R_t.
\end{equation}
Combining \eqref{case r<R_t} and \eqref{case r>R_t} gives us
\[\underset{0<r\leq 1 }{\sup} r^{\sigma+1}|a_t'(r)| \leq \max \bigg{\{ }C_1(1+\beta^{\frac{p}{p-1}}), \ \bigg( \frac{\beta+1}{\beta}\bigg)^{\frac{p}{p-1}}\frac{1}{\xi-1} \bigg{\}} .\] This shows that $a_t(r)$ satisfies the equation and is bounded independent of $t$.

\end{proof}

We will delay the proof of the mapping properties of $L_t^-$ for now and we complete the proof of the main linear result assuming we have the $L_t^-$ mapping properties. 

\noindent
\textbf{Completion of the proof of Proposition \ref{the_real_main_result}.} Here, assuming that we have the result from Lemma  \ref{k=0 mode lemma}, we can combine it with Theorem \ref{apriori_theorem} and Lemma \ref{zeromode_lemma} to complete the proof of Proposition \ref{the_real_main_result}. 
Let $f \in Y$ and $\phi \in X$ satisfy \eqref{eq_equation} and we write $f(x)=f_0(r)+f_1(x),$ and $ \ \phi(x)= \phi_0 (r)+\phi_1(x),$ where we have split off the $k=0$ mode from $\phi_1 \in X_1, $ and $ f_1 \in Y_1$. Then by \eqref{apriori_theorem}, we can write
\begin{align*}
    \|\phi\|_X \leq & \|\phi_0\|_X+\|\phi_1\|_X
    \leq C|f_0\|_Y+C\|f_1\|_Y.
\end{align*}
 Hence, if we can show there is some constant $D>0$ (independent of $f$) such that $\|f_0\|_Y+\|f_1\|_Y \leq D\|f_0+f_1\|_Y$ then we would be done.\\
\noindent
Given $f\in Y $, we have 
\begin{align*}
f(x)=\sum_{k=0}^{\infty} b_k(r)\psi_k(\theta)=& b_0(r)\psi_0(\theta)+\sum_{k=1}^{\infty} b_k(r)\psi_k(\theta)= b_0(r)+\sum_{k=1}^{\infty} b_k(r)\psi_k(\theta)=f_0+f_1
\end{align*}
where $\psi_0(\theta)=1$ and $b_k(r)=\frac{1}{|S^{N-1}|}\int_{S^{N-1}}f(r,\theta)\psi_k(\theta)d \theta$.\\
Noting that \[\int_{S^{N-1}=1}|f(r,\theta)|d \theta \leq \dfrac{\|f\|_Y}{r^{\sigma+2}}\left|S^{N-1}\right|, \] we can obtain
\begin{align}\label{f_0 estimate}
    \|f_0\|_Y= &\underset{0<|x|\leq 1 }{\sup} r^{\sigma+2}|b_0(r)| =\underset{0<|x|\leq 1 }{\sup} r^{\sigma+2} \big| \frac{1}{|S^{N-1}|}\int_{S^{N-1}}f(r\theta)d \theta \big|
    \leq \underset{0<|x|\leq 1 }{\sup} r^{\sigma+2}  \frac{\|f\|_Y}{r^{\sigma+2} } =\|f\|_Y .
\end{align}\noindent
We know $f_1=f-f_0$ so we can write
\begin{equation}\label{f_1 estimate}
    \|f_1\|_Y=\|f-f_0\|_Y\leq \|f\|_Y+\|f_0\|_Y\leq 2\|f\|_Y.
\end{equation}
By \eqref{f_0 estimate} and \eqref{f_1 estimate} we have
\[\|f_0\|_Y+\|f_1\|_Y \leq D\|f\|_Y=D\|f_0+f_1\|_Y\]
thus 
\begin{equation}\label{new notation estimate}
    \|\phi\|_X\leq D\|f_0+f_1\|_Y=D\|f\|_Y
\end{equation}
where $D$ is a positive constant independent of $f$. 
Now we go can back to our previous notation to show that the main result we want in Proposition \ref{the_real_main_result}. We proved that if we fix $ (f,g)$ and set $F=f+g$ and $ G=f-g$ and consider $ \zeta_i$ to be a solution of the scalar problems
\begin{equation*} \label{scal_3} \left\{
\begin{array}{lr}
-\Delta \zeta_1-p | \nabla w_t|^{p-2} \nabla w_t \cdot \nabla \zeta_1=F,& \text{in}~~ B_1 \backslash \{0\}\\
\zeta_1=0 & \text{on}~~ \partial B_1,
\end{array}
\right.\quad \text{and} \quad \left\{
\begin{array}{lr}
-\Delta \zeta_2+p | \nabla w_t|^{p-2} \nabla w_t \cdot \nabla \zeta_2=G& \text{in}~~ B_1 \backslash \{0\},\\
\zeta_2=0 & \text{on}~~ \partial B_1,
\end{array}
\right.
\end{equation*} 

\noindent
we can define the operators
\[ L_t^\pm(\zeta)= -\Delta \zeta \pm \frac{p}{(t|x|^{\xi-1}+\beta )}  \frac{x \cdot \nabla \zeta}{|x|^2}.\] 
 By \eqref{new notation estimate} for some positive constants $C_1$ and $C_2$, we have the estimates
\[ \|\zeta_1\|_X \leq C_1\|F\|_Y\quad \text{and }\quad \|\zeta_2\|_X \leq C_2 \|G\|_Y.\]
We know $ \phi$, and $\psi$ satisfy $ \phi+ \psi=\zeta_1$ and $ \phi - \psi= \zeta_2$ From this we saw that 
\begin{equation} \label{so_101}
\phi = \frac{\zeta_1 + \zeta_2}{2}, \quad \psi = \frac{\zeta_1 - \zeta_2}{2}.
\end{equation}
\noindent
So we get
\[ \|\phi\|_X= C_1 \|\zeta_1 +\zeta_2\|_X \leq C_1( \|\zeta_1\|+\|\zeta_2\|)\leq C( \|F\|_Y+ \|G\|_Y),\]and
\[\|\psi\|_X= C_2 \|\zeta_1 -\zeta_2\|_X \leq C_2( \|\zeta_1\|+\|\zeta_2\|)\leq C(\|F\|_Y+ \|G\|_Y).
\]
Since $F=f+g ,$ and $G=f-g$ we have
\[\|F\|_Y\leq \|f\|_Y+ \|g\|_Y \quad \text{and} \quad \|G\| \leq \|f\|_Y+ \|g\|_Y. \]
 So there is some positive constant $C$ such that for all $ t \ge 0$ and for all $ f,$ and $g $ in $Y$, there are  some $ \phi$ and $\psi $ in $X$ which satisfy  
 \begin{equation}  \label{linear_end}
\left\{
\begin{array}{lr}
-\Delta \phi-p | \nabla w_t|^{p-2} \nabla w_t \cdot \nabla \psi=f& \text{in}~~ B_1 \backslash \{0\},\\
-\Delta  \psi - p | \nabla w_t|^{p-2} \nabla w_t \cdot \nabla \phi=g& \text{in}~~ B_1 \backslash \{0\},\\
\phi=\psi=0 & \text{on}~~ \partial B_1,
\end{array}
\right.
\end{equation}
and we have the estimate
\[ \| \phi \|_X + \| \psi \|_X \le C \|f\|_Y + C \|g\|_Y.\] 
This completes the main result of our linear theory, and  Proposition \ref{the_real_main_result}.

$\hfill \Box$




 We now turn to the case of $ \kappa=-p$.   Consider (\ref{ode_k_mode_b}) in the case of $ \kappa=-p$ and $t=0$ given by 
  
\begin{equation} \label{ode_**}
-a''_k(r)-\frac{(N-1)a'_k}{r} + \frac{\lambda_k a_k(r)}{r^2} - \frac{pa'_k(r)}{\beta r }=b_k(r) \quad \text{for}~~ 0<r<1 
\end{equation} with $ a_k(1)=0$. 

\begin{lemma} \label{t=0 k nonzero mode}{} For all $k \ge 0$ there is some $C_k >0$ such that for all functions $ b(r)$ with $ \sup_{0<r<1} r^{\sigma+2} |b(r)|\le1$ there is some function $a(r)$ which satisfies (\ref{ode_**}) and $ \sup_{0<r<1} r^{\sigma}| a(r)| \le C_k$.
\end{lemma}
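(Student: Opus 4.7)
The plan is to solve the Euler-type ODE (\ref{ode_**}) by explicit variation of parameters using the two homogeneous solutions $r^{\gamma_k^\pm}$, where
\[ \gamma_k^\pm \;=\; \frac{-(N-2+p/\beta) \pm \sqrt{(N-2+p/\beta)^2 + 4\lambda_k}}{2} \]
are the roots of the indicial equation $\gamma^2 + (N-2+p/\beta)\gamma - \lambda_k = 0$ identified in Lemma \ref{L_0 kernel lemma} for the case $\kappa = -p$. The two sign facts
\[ \gamma_k^+ + \sigma \;>\; 0 \qquad \text{and} \qquad \gamma_k^- + \sigma \;<\; 0 \]
hold for every $k \ge 0$: they were proved in Lemma \ref{L_0 kernel lemma} for $k \ge 1$ (indeed, one has there $\gamma_k^- + \sigma \le -1$), and for $k=0$ (where $\gamma_0^+ = 0$ and $\gamma_0^- = -(N-2+p/\beta)$) they follow from $\sigma > 0$ and from the inequality $N-2+p/\beta > \sigma$, itself a consequence of $\xi > 1$.

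Setting $d := \gamma_k^+ - \gamma_k^- > 0$, I would define
\[ a(r) \;:=\; \frac{r^{\gamma_k^+}}{d} \int_r^{\,1} s^{\,1-\gamma_k^+}\, b(s)\, ds \;+\; \frac{r^{\gamma_k^-}}{d} \int_0^{\,r} s^{\,1-\gamma_k^-}\, b(s)\, ds \;+\; C\, r^{\gamma_k^+}, \]
with $C := -\tfrac{1}{d}\int_0^1 s^{1-\gamma_k^-} b(s)\,ds$ chosen so that $a(1)=0$. The choice of integration limits is forced by the hypothesis $|b(s)| \le s^{-\sigma - 2}$: the first integrand is $O(s^{-\sigma-1-\gamma_k^+})$, which is \emph{not} integrable at $0$ because $\gamma_k^+ > -\sigma$, so the integral must start at $r$; the second integrand is $O(s^{-\sigma-1-\gamma_k^-})$, which \emph{is} integrable at $0$ because $\gamma_k^- < -\sigma$, so the integral can start at $0$. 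A standard variation-of-parameters computation, in which the $b$-contributions from differentiating the two integrals cancel against each other with coefficient $(\gamma_k^- - \gamma_k^+)/d = -1$, confirms that $a$ solves (\ref{ode_**}) and vanishes at $r=1$.

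For the weighted estimate, one directly evaluates the elementary power integrals using $|b(s)|\le s^{-\sigma-2}$:
\[ \frac{r^{\sigma+\gamma_k^+}}{d} \int_r^1 s^{-\sigma-1-\gamma_k^+}\, ds \;=\; \frac{1 - r^{\sigma+\gamma_k^+}}{d(\sigma+\gamma_k^+)} \;\le\; \frac{1}{d(\sigma+\gamma_k^+)}, \]
\[ \frac{r^{\sigma+\gamma_k^-}}{d} \int_0^r s^{-\sigma-1-\gamma_k^-}\, ds \;=\; \frac{1}{d(-\sigma-\gamma_k^-)}, \]
and $|C|$ is controlled by the same $\tfrac{1}{d(-\sigma-\gamma_k^-)}$, which then multiplies $r^{\sigma+\gamma_k^+} \le 1$. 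Multiplying by $r^\sigma$ and summing the three contributions yields $\sup_{0<r<1} r^\sigma |a(r)| \le C_k$ with an explicit $C_k$ depending only on $N, p, k$.

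The main (and really only) obstacle is keeping the sign arithmetic of $\gamma_k^\pm + \sigma$ straight, since this single pair of inequalities controls simultaneously the integrability of the defining integrals near $0$ and the uniform decay of each of the three pieces. Both required inequalities are already available from the kernel analysis of $L_0^-$ in Lemma \ref{L_0 kernel lemma}, so no genuinely new analytic input is needed beyond careful bookkeeping of powers.
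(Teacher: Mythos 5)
Your proposal is correct and takes essentially the same route as the paper: the same variation-of-parameters solution (integral from $r$ to $1$ against $r^{\gamma_k^+}$, from $0$ to $r$ against $r^{\gamma_k^-}$, plus the constant multiple of $r^{\gamma_k^+}$ fixed by $a(1)=0$), and the same weighted estimates driven by the sign facts $\gamma_k^+ + \sigma > 0 > \gamma_k^- + \sigma$ from the kernel analysis of $L_0^-$. The only (harmless) difference is that you absorb the $k=0$ mode into the same formula, with $\gamma_0^+=0$, $\gamma_0^-=-(N-2+p/\beta)$, whereas the paper restricts this lemma's proof to $k\ge 1$ and defers $k=0$ to the integrating-factor argument of Lemma \ref{k=0 mode lemma}.
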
 

\begin{proof} Here we assume $k\ge 1$ and $t=0$. In Lemma \ref{k=0 mode lemma} we will prove the result for  the case where  $k$ is zero and $t$ is positive . Our result will also hold true for $t=0$.  By assuming $k\ge 1$, we have an  Euler type equation. We know that the fundamental set of solutions of homogeneous version of \eqref{ode_**} play a crucial role.  The solution of the homogeneous equation is given by $ a_k(r)=C_1r^{\gamma_k^+}+C_2r^{\gamma_k^-}$ where 
\[  \gamma_k ^{\pm}= \frac{-(N-2+\frac{p}{\beta})}{2} \pm \frac{\sqrt{(N-2+\frac{p}{\beta})^2+4 \lambda_k}}{2}.\]  We can now use variation of parameters to write out the particular solution of (\ref{ode_**}) as 
\begin{equation}
a_{k,p}(r)=u_1(r)r^{\gamma_k^+}+u_2(r)r^{\gamma_k^-}.
\end{equation}
We know that
\begin{equation}
    \label{equation *}
    u_1'(r)r^{\gamma_k^+}+u_2'(r)r^{\gamma_k^-}=0.
\end{equation}
 Thus we need to solve for $u_1$ and $u_2$. We compute $a_{k,p}, a_{k,p}'$ and $a_{k,p}''$ and plug in these values in \eqref{ode_**} and we get
 \begin{align}
     &u_1'r^{\gamma_k^+-1}+u_2'\gamma_k^-r^{\gamma_k^--1}+u_1r^{\gamma_k^+-2}\left[{\gamma_k^+}^2-\gamma_k^++\gamma_k^+\left[N-1-\frac{p}{\beta}\right]-\lambda_k \right] \\ \nonumber &+ u_2r^{\gamma_k^--2}\left[{\gamma_k^-}^2-\gamma_k^-+\gamma_k^-\left[N-1-\frac{p}{\beta}\right]-\lambda_k \right]=-b_k(r).
 \end{align}
 A computation show that
 \begin{equation}
   \left[{\gamma_k^+}^2-\gamma_k^++\gamma_k^+  \left[N-1-\frac{p}{\beta}\right]-\lambda_k \right]=\left[{\gamma_k^-}^2-\gamma_k^-+\gamma_k^-\left[N-1-\frac{p}{\beta}\right]-\lambda_k \right]=0.
 \end{equation}
 Thus we have 
 \begin{align}
u_1'\gamma_k^+r^{\gamma_k^+-1}+u_2'\gamma_k^-r^{\gamma_k^--1}=-b_k(r) \quad \text{and} \quad u_1'r^{\gamma_k^+}+u_2'r^{\gamma_k^-}=0.\end{align}
 From this two equations, we can solve for $u_1$ and $u_2$ and we get:
\begin{align}
    u_1(r)=\int_{T_1}^r \frac{b_k(\tau)d \tau}{\tau^{\gamma_k^+-1}(\gamma_k^--\gamma_k^+)} \quad \text{and} \quad u_2(r)=\int_{T_2}^r \frac{-b_k(\tau)d \tau}{\tau^{\gamma_k^--1}(\gamma_k^--\gamma_k^+)}
\end{align}
where $C_1$, $C_2$ and $T_1$ and $T_2$ are to be picked later. Thus the solution of the equation \eqref{ode_**} can be written as:
\begin{align}
    a_k(r)&=C_1r^{\gamma_k^+}+C_2r^{\gamma_k^-}+r^{\gamma_k^+}\int_{T_1}^r \frac{b_k(\tau)d \tau}{\tau^{\gamma_k^+-1}(\gamma_k^--\gamma_k^+)}+r^{\gamma_k^-}\int_{T_2}^r \frac{-b_k(\tau)d \tau}{\tau^{\gamma_k^--1}(\gamma_k^--\gamma_k^+)}.
\end{align}
By the boundary condition, we need to have $a_k(1)=0$. Thus we pick $T_1=1$, $C_2=0$, $T_2=0$ and $C_1$ such that we have \begin{equation}
    C_1+\int_{0}^1 \frac{-b_k(\tau)d \tau}{\tau^{\gamma_k^--1}(\gamma_k^--\gamma_k^+)}=0 \Longrightarrow C_1=\int_{0}^1 \frac{b_k(\tau)d \tau}{\tau^{\gamma_k^--1}(\gamma_k^--\gamma_k^+)}.
\end{equation}   We first show that $C_1$ is well defined.  Note that we showed that $\gamma_k^- +\sigma+1\le 0$ and
\begin{equation}
    \label{b_k(tau) estimate} \frac{ |b_k(\tau)|}{ \tau^{\gamma_k^--1}} \le  \frac{1}{\tau^{\gamma_k^-+\sigma+1}}.
\end{equation} Thus we have 
\begin{align}
    |C_1| \le \left|\int_{0}^1 \frac{b_k(\tau)d \tau}{\tau^{\gamma_k^--1}(\gamma_k^--\gamma_k^+)}\right| \le \frac{1}{|\gamma_k^--\gamma_k^+|}\int_{0}^1 \frac{d \tau}{\tau^{\sigma+\gamma_k^-+1}}\le \frac{1}{|\gamma_k^--\gamma_k^+|} \frac{1}{|\sigma+\gamma_k^-|}
\end{align} where this is bounded by some positive constant $C$. This shows that $C_1$ is bounded.
So we found the equation of $a_k(r)$ as following 
\begin{equation}
      a_k(r) = r^{\gamma_k^+}\int_{0}^1 \frac{b_k(\tau)d \tau}{\tau^{\gamma_k^--1}(\gamma_k^--\gamma_k^+)}+r^{\gamma_k^+}\int_{1}^r \frac{b_k(\tau)d \tau}{\tau^{\gamma_k^+-1}(\gamma_k^--\gamma_k^+)}+r^{\gamma_k^-}\int_{0}^r \frac{-b_k(\tau)d \tau}{\tau^{\gamma_k^--1}(\gamma_k^--\gamma_k^+)}.
\end{equation}
With this choice of parameters, $a_k(r)$ satisfies the equation \eqref{ode_**} and the boundary condition. We now need to show that it satisfies the estimate as well. We have
\noindent
\begin{align}
     \nonumber r^{\sigma}|a_k(r)|&\leq \left| \frac{r^{\sigma}r^{\gamma_k^+}}{\gamma_k^--\gamma_k^+}\int_{0}^1 \frac{b_k(\tau)d \tau}{\tau^{\gamma_k^--1}}+ \frac{r^{\sigma}r^{\gamma_k^+}}{\gamma_k^--\gamma_k^+}\int_r^1 \frac{b_k(\tau)d \tau}{\tau^{\gamma_k^+-1}}+\frac{r^{\sigma}r^{\gamma_k^-}}{\gamma_k^--\gamma_k^+}\int_0^r \frac{b_k(\tau)d \tau}{\tau^{\gamma_k^--1}} \right|.
\end{align}
Noting that we have the estimate \eqref{b_k(tau) estimate} and $\sigma+\gamma_k^+>0$, we see that there exists a positive constant $C$ such that 
\begin{equation}  \label{first term estimate}
  \left| \frac{r^{\sigma}r^{\gamma_k^+}}{\gamma_k^--\gamma_k^+}\int_r^1 b_k(\tau)\frac{d \tau}{\tau^{\gamma_k^+-1}}\right|\le r^{\gamma_k^++\sigma}\int_r^1 |b_k(\tau)|\frac{d \tau}{\tau^{\gamma_k^+-1}}\le \frac{r^{\gamma_k^++\sigma}}{|\sigma+\gamma_k^+|}\left(1-\frac{1}{r^{\sigma+\gamma_k^+}}\right) \le C r^{\gamma_k^++\sigma}\le C.
\end{equation} 
We now need to examine the other two terms. Similarly, we should note that we have
$\underset{0<r<1}{\sup}r^{\sigma+2}|b_k(r)| \le 1$, and $\sigma+\gamma_k^+>0$. Thus we get \begin{align} &\label{second term estimate} \left|\frac{r^{\gamma_k^++\sigma}}{\gamma_k^--\gamma_k^+}\int_{0}^1 \frac{b_k(\tau)d \tau}{\tau^{\gamma_k^--1}} + \frac{r^{\sigma}r^{\gamma_k^-}}{\gamma_k^--\gamma_k^+}\int_0^r b_k(\tau)\frac{d \tau}{\tau^{\gamma_k^--1}} \right| \le \frac{1}{|\gamma_k^--\gamma_k^+|}\left(\frac{r^{\gamma_k^++\sigma}}{|\sigma+\gamma_k^-|}  + \frac{r^{\gamma_k^-+\sigma}}{|\sigma+\gamma_k^-|r^{\gamma_k^-+\sigma}} \right) \\& \nonumber\le r^{\gamma_k^++\sigma}+\le C
\end{align}
The bounds we get from \eqref{first term estimate} and \eqref{second term estimate} give us the desired estimate on $a_k(r)$.



\end{proof}


\noindent 
 \textbf{Proof of Proposition \ref{main_result}.}  Recall we are trying to show there is some $C>0$ such that for all $ f \in Y$ and $ t\ge0$ there is some $ \phi \in X$ which satisfies 
   \begin{equation} 
   \label{new equation}
\left\{
\begin{array}{lr}
L_t^\pm(\phi)=f & \text{in}~~ B_1 \backslash \{0\},\\
\phi =0 & \text{on}~~ \partial B_1.
\end{array}
\right.
\end{equation}   Moreover one has $ \| \phi \|_X \le C \|f\|_Y$. 

For the case $\kappa=+p$, the result has been proved in \cite{AghajaniA.2021Sepi}. We are going to show the result is also true for the case $\kappa=-p$. First we prove the result on space $X_1$ and to get the desired result on the full space $ X$, we will  recombine it with the result for the $k=0$ mode in Lemma \ref{k=0 mode lemma}.
We fix some $0<T<\infty$, and define the set of $A$ to be all $t \in [0,T]$ such that there exists a $C_t>0$  that for all $f \in Y_1$ there exists a $\phi \in X_1$ satisfying \eqref{new equation} and the estimate \begin{equation}
     \|\phi\|_X \le C_T \|f
     \|_Y.
 \end{equation}
 In Lemma \ref{t=0 k nonzero mode}, we showed that $0 \in A$, thus the set $A$ is non-empty. We are going to show that $A$ is closed and open.\\
 First to show that is open, we let $t_0$ be in the set $A$, and we are going to show that for some small $\epsilon$ we have that $t=t_0+ \epsilon$ is also in the set $A$. This means that we need to show that there exists a $C_t>0$ such that if we let $f\in Y_1$ then there exits $\phi \in X_1$ such that \begin{equation}  
\left\{
\begin{array}{lr}
L_t^-(\phi)=f ,& \text{in}~~ B_1 \backslash \{0\},\\
\phi =0,& \text{on}~~ \partial B_1,
\end{array}
\right.
\end{equation} and  they satisfy the estimate. 
Since $t_0$ is in the set A, thus there exists a $C_{t_0}$ such that for all $f \in Y_1$ there exists a $\phi_0$ such that they satisfy \begin{equation}  
\label{phi-0 equation}
\left\{
\begin{array}{lr}
L_t^-(\phi_0)=f ,& \text{in}~~ B_1 \backslash \{0\},\\
\phi_0 =0,& \text{on}~~ \partial B_1,
\end{array}
\right.
\end{equation} and the estimate \begin{equation}
\label{phi-0 estimate}
     \|\phi_0\|_X \le C_{t_0} \|f
     \|_Y.
 \end{equation} We look for a solution of the form $\phi=\phi_0+\psi$ where $\psi$ is unknown. We let $L_t^-(\phi)= -\Delta \phi+a_t \nabla \phi$ where $a_t=\frac{-px}{(t|x|^{\xi-1}+\beta )|x|^2}$. Thus we want \begin{align*}
     -\Delta ( \phi_0+ \psi)+a_t (\nabla \phi_0 +\nabla \psi)& =f \\ \iff  -\Delta  \phi_0+ a_{t_0} \nabla \phi_0 - \Delta \psi+ [a_{t_0+\epsilon}-a_{t_0}] \nabla \phi_0 + a_{t_0+\epsilon}\nabla \psi& =f.
\end{align*}
By \eqref{phi-0 equation}, we find that
 \begin{align}\label{psi equation}
  \nonumber f - \Delta \psi+ [a_{t_0+\epsilon}-a_{t_0}] \nabla \phi_0 + a_{t_0+\epsilon}\nabla \psi& =f\\ \nonumber\iff  - \Delta \psi+ a_{t_0}\nabla 
\psi+[a_{t_0+\epsilon}-a_{t_0}] \nabla \phi_0 + [a_{t_0+\epsilon}-a_{t_0}]  \nabla \psi& =0 \\ \iff L_{t_0}^-(\psi)=[a_{t_0}-a_{t_0+\epsilon}] \nabla \phi_0 + [a_{t_0}-a_{t_0+\epsilon}]  \nabla \psi& . 
\end{align}
Thus, we need to find $\psi$ such that it satisfies \eqref{psi equation}.
Now let $f \in Y_1$ be nonzero and set $F\coloneqq \frac{f}{\|f\|}$ so  $\|F\|_Y =1$. By noting that $\phi_0$ satisfies \eqref{phi-0 estimate} and $\|\psi\|_X\le 1$,
we want to show that $\phi$ is a solution of $L_t^-(\phi)=f$, so we are going to apply a fixed point argument. Define the mapping $T_\epsilon(\psi)=\hat{\psi}$ such that \begin{equation}\label{hat_psi equation}
    L_{t_0}^-(\hat{\psi})=[a_{t_0}-a_{t_0+\epsilon}] \nabla \phi_0 + [a_{t_0}-a_{t_0+\epsilon}]  \nabla \psi \coloneqq f_1.
\end{equation}
We are going to do a fixed point argument on $T_\epsilon:B_1 \rightarrow B_1$ where $B_1=\left\{\psi \in X ; \|\psi\|_X \leq 1\right\}$. We need to show that for some  $\epsilon$ there exists a small $\epsilon_0>0$ such that for all $|\epsilon|< \epsilon_0$\\$(I)$ $T_\epsilon(B_1) \subset B_1$,\\ 
$(II)$ there exists some $\gamma\in (0,1)$ such that for all $\psi_1, \psi_2 \in B_1$ we have $\|T_\epsilon(\psi_2)-T_\epsilon(\psi_1)\|_X \le \gamma\|\psi_2-\psi_1\|_X.$\\
\textit{(I) Into.} We have $L_{t_0}^-(\hat{\psi})=f_1$. Let $f_1\coloneqq K+I$ where $K \coloneqq [a_{t_0}-a_{t_0+\epsilon}] \nabla \phi_0$ and $I\coloneqq [a_{t_0}-a_{t_0+\epsilon}]  \nabla \psi$.
Thus we have \begin{equation}
    \label{f_1 inequlity}\|f_1\| \leq \|K\|_Y+\|I\|_Y.
\end{equation} We can find 
\begin{align}
    \|K\|_Y &\leq \underset{0<|x|<1}{\sup}|x|^{\sigma+2}[a_{t_0}-a_{t_0+\epsilon}] \nabla \phi_0\le \underset{0<|x|<1}{\sup}|x|^{\sigma+1}|\nabla\phi_0|\left|\frac{p}{((t_0+\epsilon)|x|^{\xi-1}+\beta )}-\frac{p}{((t_0)|x|^{\xi-1}+\beta )}\right| .
\end{align}
Since $\phi_0 \in X $, we know that $ \underset{0<|x|<1}{\sup}|x|^{\sigma+1}|\nabla\phi_0|\leq 1$, so we get
\begin{align} \label{K open estimate}
    \|K\|_Y\le \underset{0<|x|<1}{\sup}|x|^{\sigma+1}|\nabla\phi_0|\left|\frac{p\ \epsilon \  |x|^{\xi-1}}{\left[((t_0+\epsilon)|x|^{\xi-1}+\beta )\right]\left[((t_0)|x|^{\xi-1}+\beta )\right]}\right|\le \epsilon \frac{p}{\beta^2}
\end{align}
and thus as $\epsilon$ goes to zero, $\|K\|_Y$ goes to zero. For $\|I\|_Y$, similarly we have \begin{align} \label{I open estimate}
    \|I\|_Y \leq \underset{0<|x|<1}{\sup}|x|^{\sigma+2}[a_{t_0}-a_{t_0+\epsilon}] \nabla \psi \le \underset{0<|x|<1}{\sup}|x|^{\sigma+1}|\nabla\psi|\left|\frac{p\ \epsilon \  |x|^{\xi-1}}{\left[((t_0+\epsilon)|x|^{\xi-1}+\beta )\right]\left[((t_0)|x|^{\xi-1}+\beta )\right]}\right|.
\end{align}
Since $\psi \in X $, we know that $ \underset{0<|x|<1}{\sup}|x|^{\sigma+1}|\nabla\psi|\leq 1$ so we get
 \begin{align}
      \|I\|_Y\le \epsilon \frac{p}{\beta^2}
 \end{align}
 and thus as $\epsilon$ goes to zero $\|I\|_Y$ goes to zero.
By \eqref{f_1 inequlity}, we can deduce that for $\epsilon$ small we have $T_\epsilon(B_1) \subset B_1$ and so $T_\epsilon$ is into.\\
\textit{(II) Contraction.} We need to show that for some $\gamma\in (0,1)$, we have \begin{equation}
    \|T_\epsilon(\psi_2)-T_\epsilon(\psi_1)\|_X \le \gamma\|\psi_2-\psi_1\|_X.
\end{equation}
We set the right hand side of \eqref{hat_psi equation} to be $f_1$ and we can write it as $f_1 \coloneqq K(\phi_0)+I(\psi)$ where $K \coloneqq [a_{t_0}-a_{t_0+\epsilon}] \nabla \phi_0$ and $I\coloneqq [a_{t_0}-a_{t_0+\epsilon}]  \nabla \psi$. Thus we can write $\|T_\epsilon(\psi_2)-T_\epsilon(\psi_1)\|_X $ as
\begin{align}
    \nonumber  &\left\| K(\phi_0)+I(\psi_2)- K(\phi_0)-I(\psi_1)\right\|_Y =\| I(\psi_2)- I(\psi_1)\|_Y\leq \underset{0<|x|<1}{\sup}|x|^{\sigma+2}|\nabla \psi_2-\nabla\psi_1|[a_{t_0}-a_{t_0+\epsilon}] \\& \le \nonumber \ \underset{0<|x|<1}{\sup}|x|^{\sigma+1}|\nabla(\psi_2-\psi_1)|\left|\frac{p\ \epsilon \  |x|^{\xi-1}}{\left[((t_0+\epsilon)|x|^{\xi-1}+\beta )\right]\left[((t_0)|x|^{\xi-1}+\beta )\right]}\right|.
\end{align}
Similar to \eqref{I open estimate}, we can find that
\begin{equation}
    \| I(\psi_2)- I(\psi_1)\|_Y \le \|\psi_2-\psi_1\|_X \left|\frac{p\ \epsilon \  |x|^{\xi-1}}{\left[((t_0+\epsilon)|x|^{\xi-1}+\beta )\right]\left[((t_0)|x|^{\xi-1}+\beta )\right]}\right| \le \epsilon \frac{p}{\beta^2}\|\psi_2-\psi_1\|_X
\end{equation}
For $\epsilon$ small, we get that there exists some $\gamma \in (0,1)$ such that 
\begin{equation}
    \| I(\psi_2) -I(\psi_1)\|_Y\leq \gamma\|\psi_2-\psi_1\|_X
\end{equation}
which gives us 
\begin{equation}
    \|T_\epsilon(\psi_2)-T_\epsilon(\psi_1)\|_X \le \gamma\|\psi_2-\psi_1\|_X.
\end{equation}
Thus $T_\epsilon$ is a also a contraction. So we can apply Banach's Fixed point Theorem and thus there exists $\psi \in X_1$ such that it is the fixed point of $T_\epsilon$. 
So we showed that there exists a constant $C_{t}$ such that for  $F \in Y_1$ where $\|F\|=1$ 
 there exists a $\phi$ in $X_1$ such that $L_t(\phi)=F$.
We show that there exists a $C_{t_1}>0$ such that for all $f \in Y_1$ there exists $\phi \in X_1$ such that they satisfy $L_t(\phi)=f$ and the estimate. Thus, using the linearity of $L_t$, we can write
\begin{align*}
    L_t(\phi)=F=\frac{f}{\|f\|_Y} \Rightarrow \left(\|f\|_Y L_t^-(\phi)\right)=L_t^-(\|f\|_Y\phi)=f
\end{align*}
Now we set $\phi\coloneqq \|f\|_Y \phi$, so we have \begin{equation}L_t^-(\phi)=f \quad \text{and }\quad 
    \|\phi\|_X\le \|\|f\|_Y \phi\|_X \le \|\phi\|_X\|f\|_Y \le C_{t_1}\|f\|_Y.
\end{equation}
Thus for all $f \in Y$ there exists some $\phi \in X_1$ such that $L_t^-(\phi)=f$ and $\|\phi\|_X \le C_{t_1}\|f\|.$ 
 This means that $t=t_0+\epsilon$ is in the set $A$ and thus $A$ is open.\\ 
We now show that $A$ is also closed. Let $t_m$ be in $A$ such that $t_m$ converges to $t \in [0,T]$. The goal is to show that $t $ is also in $A$. So we need to show that if we have $f \in Y_1$ we can find  $\phi \in X_1$ such that $L_t(\phi)=f$ and they satisfy the estimate. Since $t_m \in A$ thus for all $f \in Y_1$ there exists  $\phi_m \in  X_1$ such that $L_{t_m}(\phi_m)=f$ and $\|\phi_m\|_X \le C_{t_m} \|f\|_Y$. From $L_{t_m}(\phi_m)=f$, we get 
\begin{align}
    -\Delta \phi_m +a_t \nabla \phi_m=f
\end{align} where $a_t=\frac{-px}{(t|x|^{\xi-1}+\beta )|x|^2}$. Thus we have \begin{equation}
    -\Delta \phi_m =- a_t \nabla \phi_m+f \coloneqq g_m.
\end{equation}
We first assume $C_{t_m}$ is bounded. Similar to before for $k \geq 2 $ we define the two sets \[A_k= \Bigl\{ x \in B_1 : \frac{1}{k} <|x|<1 \Bigl\} \quad \text{and}  \quad \tilde{A_k}= \Bigl\{ x \in B_1 : \frac{1}{2k} <|x|<1 \Bigl\}. \]
such  that $A_k \subset \tilde{A_k}$. With a similar approach as \eqref{phi_m equation} we can show that $g_m$ is bounded in $\Tilde{A}_k$ and we have $\| \Delta \phi_m \|_{L^\infty (\tilde{A}_k)}\leq C$. By elliptic regularity we get that $\| \phi_m \|_{C^{1, \lambda} (\bar{A}_k)}\leq C_1 $. Thus, we can use the compactness argument and the diagonal argument to deduce that  there exists a subseqence $\{\phi_{m_i}\}_{i} \subset \{\phi_m\}_m$  and  $\phi \in  C^{1, \frac{\lambda}{2}} (\bar{A_k})$  such that $ \phi _{m_i} \rightarrow \phi$ in $  C^{1, \frac{\lambda}{2}}_{\textit{loc}} (\bar{B_1}\backslash \{0\})$ and thus $\phi$ satisfies \begin{equation}
    -\Delta \phi - \dfrac{p}{(t|x|^{\xi-1}+\beta )}  \dfrac{x \cdot \nabla \phi}{|x|^2} =f  \quad ~ \text{in} \quad  B_1 \backslash \{0\}.
\end{equation}
Note that for fixed $0<|x|<1$ we have 
\begin{align}
    \|\phi\|_X=\underset {0<|x|\leq 1} {\sup} \left\{ |x|^{\sigma} | \phi|+ |x|^{\sigma+1} |\nabla \phi| \right\} \le \lim_{m \to \infty}\underset {0<|x|\leq 1} {\sup} \left\{ |x|^{\sigma} | \phi_m|+ |x|^{\sigma+1} |\nabla \phi_m| \right\} \le \lim_{m \to \infty} \|\phi_m\|_X.
\end{align}
Thus, since $t_m \in A$, we get
\begin{equation}
    \|\phi\|_X\leq \lim_{m \to \infty} \|\phi_m\|_X \le C_{t_m} \|f\|_Y.
\end{equation}
This give us that $\phi$ satisfies $L_t^-(\phi)=f $ and the estimate thus $t$ is in $A$. \\
We now assume $C_{t_m} $ is unbounded. Since by the assumption $C_m$ is the smallest possible constant that we have the estimate for, we can say that we can find some $\phi_m \in X_1$ and $f\in Y_1$ such that $L_{t_m}^-(\phi_m)=f$ and $\|\phi_m\|_X \le C_{t_m} \|f\|_Y$. But for $(C_{t_m}-1)$, we do not have the estimate and thus we get $\|\phi_m\|_X \geq (C_{t_m}-1) \|f\|_Y$. We first normalize and we get $\|\phi_m\|_X=1$ and $\|f\|_Y \longrightarrow0$. We also have \begin{equation}
    -\Delta \phi_m =- a_t \nabla \phi_m+f \coloneqq g_m
\end{equation} and with the same approach as above there exists a subseqence $\{\phi_{m_i}\}_{i} \subset \{\phi_m\}_m$  and  $\phi \in  C^{1, \frac{\lambda}{2}} (\bar{A_k})$  such that \begin{equation}
    -\Delta \phi - \dfrac{p}{(t|x|^{\xi-1}+\beta )}  \dfrac{x \cdot \nabla \phi}{|x|^2} =0  \quad ~ \text{in} \ B_1 \backslash \{0\}. 
\end{equation}
With a similar argument as in Theorem \ref{apriori_theorem}, we find that $\phi \in X_1$ is nonzero and it in the kernel of $L_t^-$ which a contradiction with our kernel results. Thus we can deduce that $C_{t_m}$ should be bounded. We have shown that $A$ is non-empty, open and closed meaning that for all $t \in (0, \infty)$ there exists $C_t$ such that for all $f \in Y_1$ there exists $\phi \in X_1$ such that \begin{equation}  
\left\{
\begin{array}{lr}
L_t^-(\phi)=f ,& \text{in}~~ B_1 \backslash \{0\},\\
\phi =0,& \text{on}~~ \partial B_1,
\end{array}
\right.
\end{equation} and  \begin{equation}
     \|\phi\|_X \le C_t \|f
     \|_Y.
 \end{equation} So for all $0<T<\infty$ there exists a $C^T \le \infty$ such that $|C_t| \le C^T$ for all $0 \le t \le T$. We should note that $C_t$ can not approach infinity since if we assume that $C_t \longrightarrow \infty$ as $t \longrightarrow \infty$, we get the same equation as \eqref{L_tequation_t=infty case k=-p} and we can apply the same argument to show that we get a contradiction. This shows that $C_t$ should be bounded and thus the proof is complete.

\hfill $\Box$

Now to get the desired result on the full space $ X$, we need to recombine it with the result for the $k=0$ mode.
\begin{lemma} \label{k=0 mode lemma}($k=0$ mode for $L_t^-$)  There is some positive constant $ C$ such that for all positive $ t$ and all $b(r)$ defined on $r \in(0,1)$ with infinite $\sup_{0<r<1}  r^{\sigma+2} |b(r)|$, there exists some $a_t$ which solves 
\begin{equation}\label{a_t-_equation}
- a''_t (r)  - \frac{(N-1)a_t'(r)}{r}- \frac{ p a_t'(r)}{\beta r+ t r^\xi}=b(r)~~~~ 0<r\le 1
\end{equation}
with $a_t(1)=0$. Also, there exists $C>0$ (independent of $t$) such that $a_t(r)$ satisfies 
\begin{equation} \label{neg_est}
\sup_{0<r<1} r^{\sigma+1} |a'_t(r)| \le C \sup_{0<r<1}  r^{\sigma+2} |b(r)|.
\end{equation}
\noindent
We are also assuming 
\begin{equation}\label{b_r-_equation}
    \underset{0<r\le1}{\sup} r^{\sigma+2} |b(r)| \le 1.
\end{equation}
\end{lemma}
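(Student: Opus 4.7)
The plan is to follow the template of Lemma~\ref{zeromode_lemma}: solve the linear ODE explicitly via an integrating factor, pick the constant of integration so that the resulting $a_t$ lies in the correct weighted space, and then derive the estimate by a case analysis at the intrinsic scale $R_t := t^{-1/(\xi-1)}$ (defined by $tR_t^{\xi-1}=1$). First I would recast \eqref{a_t-_equation} in the form $(\mu_t a_t')' = -\mu_t b$, where the integrating factor for $L_t^-$ is
$$\mu_t(r) = r^{N-1+p/\beta}\left(\frac{\beta+t}{\beta+tr^{\xi-1}}\right)^{p/(p-1)},$$
normalized so $\mu_t(1)=1$. A key qualitative difference from the $L_t^+$ case of Lemma~\ref{zeromode_lemma} is that now $\mu_t$ vanishes rapidly at the origin, so $1/\mu_t(r) \sim r^{-(N-1+p/\beta)}$ blows up there rather than tending to zero.

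Integrating from $r$ to $1$ yields
$$a_t'(r) = \frac{1}{\mu_t(r)}\left[a_t'(1) + \int_r^1 \mu_t(\tau)b(\tau)\,d\tau\right],$$
and to cancel the singularity of $1/\mu_t$ at the origin I would take $a_t'(1) = -\int_0^1 \mu_t(\tau)b(\tau)\,d\tau$, which is a convergent integral since $\mu_t(\tau)|b(\tau)| \le C\tau^{M-1}$ near $\tau=0$ with
$$M := N-2-\sigma + p/\beta = \frac{(1+p)(\xi-1)}{p-1} > 0.$$
This produces the clean formula $a_t'(r) = -\mu_t(r)^{-1}\int_0^r \mu_t(\tau)b(\tau)\,d\tau$, and $a_t$ is recovered by $a_t(r) := -\int_r^1 a_t'(s)\,ds$, automatically satisfying $a_t(1)=0$. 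The $(\beta+t)^{p/(p-1)}$ factors in $\mu_t(r)$ and $\mu_t(\tau)^{-1}$ cancel cleanly, reducing the estimate to bounding
$$r^{\sigma+1}|a_t'(r)| \le \Big(\sup_{0<\rho\le 1}\rho^{\sigma+2}|b(\rho)|\Big)\cdot r^{-M}(\beta+tr^{\xi-1})^{p/(p-1)}\int_0^r \tau^{M-1}(\beta+t\tau^{\xi-1})^{-p/(p-1)}\,d\tau$$
uniformly in $t$.

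I would finish with a case analysis at $R_t$. When $r \le R_t$, both $\beta+tr^{\xi-1}$ and $\beta+t\tau^{\xi-1}$ for $\tau\le r$ are trapped between $\beta$ and $\beta+1$, and $\int_0^r \tau^{M-1}d\tau = r^M/M$ produces a bound independent of $t$. When $r > R_t$, I would split $\int_0^r = \int_0^{R_t} + \int_{R_t}^r$: on the first piece the previous estimate gives a factor of $R_t^M$, which pairs against $(tr^{\xi-1})^{p/(p-1)}\, r^{-M}$ via $tR_t^{\xi-1}=1$ to leave a constant; on the second piece, $\beta+t\tau^{\xi-1} \ge t\tau^{\xi-1}$ reduces the integral to $t^{-p/(p-1)}\int_{R_t}^r \tau^{(\xi-1)/(p-1)-1}d\tau$, and the powers recombine via the identity $M(p-1) = (1+p)(\xi-1)$ to a constant independent of $r$ and $t$. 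The main obstacle is precisely this last bookkeeping: the exponent $(\xi-p)/(p-1) = (\xi-1)/(p-1)-1$ appearing in the second integrand can have either sign depending on $N$ and $p$, so both subcases of the elementary integration must be carried out and checked to yield a uniform constant, after which one rescales by $\sup_{0<\rho\le 1}\rho^{\sigma+2}|b(\rho)|$ to obtain \eqref{neg_est}.
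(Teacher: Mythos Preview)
Your argument is correct, but it takes a genuinely different route from the paper's proof. The paper also starts from the integrating-factor representation, but it chooses $a_t'(1) = -\int_{R_t}^1 \mu_t(\tau)b(\tau)\,d\tau$ (integrating from the intrinsic scale $R_t$ rather than from $0$), obtains only a $t$-dependent bound $\|a_t\|_X \le C_t\|b\|_Y$, and then upgrades this to a uniform constant via a blow-up/compactness argument: assuming $C_{t_m}\to\infty$, normalizing, and passing to a limit that lands in the kernel of $L_t^-$ (or of $-\Delta$ when $t_m r_m^{\xi-1}\to\infty$), contradicting Lemma~\ref{kernel_Lt}. Your choice $a_t'(1)=-\int_0^1 \mu_t b$ yields the cleaner formula $a_t'(r)=-\mu_t(r)^{-1}\int_0^r \mu_t b$, after which your explicit case analysis at $R_t$ delivers the uniform constant directly, with no appeal to the kernel lemmas. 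This is more elementary and self-contained, and is in fact closer in spirit to the paper's own treatment of the $L_t^+$ zero mode (Lemma~\ref{zeromode_lemma}); the paper's blow-up method, while less direct here, is the more robust template when explicit integration is unavailable.

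One minor point: your worry about the sign of $(\xi-p)/(p-1)$ is unnecessary. The antiderivative exponent is $(\xi-1)/(p-1)$, which is strictly positive for all admissible $N,p$, so $\int_{R_t}^r \tau^{(\xi-p)/(p-1)}\,d\tau \le \tfrac{p-1}{\xi-1}\,r^{(\xi-1)/(p-1)}$ always holds and only one subcase arises.
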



\begin{proof} 

\noindent  The same as before, we know  the integrating factor associated with our ODE \eqref{a_t-_equation} is given by  $\mu_t(r)=e^{P(r)}$
    where
    \begin{align*}
    P(r)= &\int_1^r \bigg(\frac{N-1}{\tau}+ \frac{p}{\beta \tau +t \tau^{\xi}} \bigg)d\tau
    = (N-1) \ln{r} + \frac{p}{\beta(1-
    \xi)} \bigg(\ln{\bigg(\frac{\beta r^{1-\xi}+t}{t+\beta}\bigg)}   \bigg).
   \end{align*} 
\noindent
   By noting that $\beta(1-\xi)=1-p$, the integrating factor is
 
    \begin{align*}
       \mu_t(r)=  e^{(N-1) \ln{r} + \frac{p}{\beta(1-
   \xi)} \bigg(\ln{\bigg(\frac{\beta r^{\xi-1}+t}{t+\beta}\bigg)}   \bigg)}
    = e^{\ln{r^{(N-1)}}}.e^{\ln{\bigg(\frac{\beta r^{1-\xi}+t}{t+\beta}\bigg)^{(\frac{p}{\beta(1-
   \xi)})}}}=r^{N-1+\frac{p}{\beta}}\bigg(\frac{tr^{\xi-1}+\beta}{t+\beta}\bigg)^{(\frac{p}{1-p})}.
    \end{align*}
\noindent
Thus from \eqref{a_t-_equation}, using the integrating factor, we get \[-\frac{d}{d\tau}(\mu_t(\tau)a'_t(\tau))= \mu_t(\tau)b(\tau).\]
By considering $\mu_t(1)=1$, we can integrate both sides and obtain
\begin{align*}-\int_s^1 \frac{d}{d\tau}(\mu_t(\tau)a'_t(\tau))d \tau= \int_s^1 \mu_t(\tau)b(\tau) d\tau \Longrightarrow
-(\mu_t(1)a'_t(1)-\mu_t(s)a'_t(s))= \int_s^1 \mu_t(\tau)b(\tau) d\tau.
\end{align*}Thus, we have
\begin{equation}\label{a'_t equation}
   a'_t(s)=\frac{1}{\mu_t(s)}\bigg(a_t'(1)+\int_s^1 \mu_t(\tau)b(\tau) d\tau \bigg).
\end{equation}
To obtain $a_t(r)$, we integrate \eqref{a'_t equation}  with respect to $s$ from $r$ to $1$ and we consider $a_t(1)=0$, so we deduce
\[a_t(r)=-\int_r^1\frac{1}{\mu_t(s)}\bigg(a_t'(1)+\int_s^1 \mu_t(\tau)b(\tau) d\tau \bigg) ds. \]
We set $a'_t(1) = -\int_{R_t}^{1}\mu_t(\tau)b(\tau)d \tau$ where $R_t^{\xi-1}t=1$ then we have
\[a'_t(r)=-\frac{1}{\mu_t(r)}\int_{R_t}^{r} \mu_t (\tau) b(\tau) d\tau.\]
So we can write $a_t$ as
\[a_t(r) = \int_r^1 \bigg(\frac{1}{\mu_t(s)}\int_{R_t}^s \mu_t(\tau)b(\tau) d\tau \bigg) ds ~~\text{for }~~~ 0<r\leq 1. \]
\noindent
Thus by considering \eqref{b_r-_equation},  we can find that
\begin{align}\label{a_t estomate}
 a_t(r) \leq & \int_r^1 \bigg(\frac{1}{\mu_t(s)}\int_{R_t}^s \frac{\mu_t(\tau) }{\tau^{\sigma+2}} \tau^{\sigma+2} |b(\tau)| d\tau \bigg) ds \\ \leq \nonumber & \int_r^1 s^{1-N-\frac{p}{\beta}} \bigg( \frac{ts^{\xi-1}+\beta}{t+\beta}\bigg)^{-(\frac{p}{1-p})} \int_{R_t}^{s} \bigg(\frac{t\tau^{\xi-1}+\beta}{t+\beta}\bigg)^{(\frac{p}{1-p})} \tau^{-\xi+2\frac{p}{\beta}} \  \|b(r)\|_Y \\\leq \nonumber & C_t \|b(r)\|_Y .
\end{align}
\noindent
So we have shown that $a_t$ satisfies the equation with an estimate, but $C$ can possibly depend on $t$. Now assume the result is false and
we suppose that there exists some positive  $ t_m $ such that $C_{t_m}>M $. Also, $a_{t_m}$, and $b_m$ satisfy \eqref{a_t-_equation} and \[\|a_{t_m}\|_X > M \|b_m\|_Y.\]  By normalizing, we can assume that \begin{equation}
    \|b_m\|_Y=\underset{0<r<1}{\sup} \{ r^{\sigma+2} |b_m(r)|\} \rightarrow 0 \quad \text{and} \quad \|a_{t_m}\|_X= \underset{0<r<1}{\sup} \{ r^{\sigma} |a_{t_m}(r)|+ r^{\sigma+1} |a'_{t_m}(r)|\}=1.
\end{equation} 
We claim \begin{equation}\label{negkappa_claim}
      \underset{0<r< 1}{\sup}  \{ r^{\sigma+1} |a'_{t_m}|\} \rightarrow 0.
\end{equation} Suppose there are some $ 0<r_m<1$ and $\epsilon>0$ such that \begin{equation}\label{k_begative_contract_inequality}
     r_m^{\sigma+1} |a_{t_m}(r_m)| \ge \epsilon.
\end{equation}     We need to consider two cases:  \\ 
\noindent
(I)  $ r_m $ is  bounded away from zero, \\ 
(II) $ r_m \searrow 0$. \\ 
\noindent
In either case, recall that $ a_m$ satisfies 
\begin{equation}\label{blowup_p}
- a''_{t_m}(r)  - \frac{(N-1)a_{t_m}'(r)}{r}- \frac{ p a_{t_m}'(r)}{\beta r+ t_m r^\xi}=b_m(r)\quad\text{for}\quad 0<r< 1
\end{equation}
with $a_{t_m}(1)=0$.


\noindent
\textbf{Case (I).}
Note that for all small positive $ \E$,  we have $ \underset{\E<r<1}{\sup} |b_m(r)| \rightarrow 0$.  Fix $ \E_0>0$ small.  We have that 
 $|\frac{ p a_{t_m}'(r)}{\beta r+ t_m r^\xi}|$ is bounded by
 some positive $C$ on $\E_0<r<1$. Using the regularity theorem of elliptic PDE, we can say for all $\E_0>0$, there exist some $0<\lambda<1$ and a positive constant $C_1$ such that  \[\|a_{t_m}\|_{C^{1,\lambda}(2\E_0<r<1)}<C_1.\]
 So by the compactness argument and the diagonal argument, there exists some subsequence (without renaming) $a_{t_m}$ such that $a_{t_m} \to a$ in $C^{1,\frac{\lambda}{2}}_{loc} (0,1]$.
Now suppose $t_m $ converges to some $ t \in [0,\infty]$ and so we can pass to the limit in \eqref{blowup_p} to arrive at 

\begin{equation}\label{limit_ode}
- a''(r)  - \frac{(N-1)a'(r)}{r}- \frac{ p a'(r)}{\beta r+ t r^\xi}=0~~~~ 0<r< 1
\end{equation} with $ a(1)=0$ when $t$ is finite.  Also, in the case where $ t= \infty$ the equation is  
\[ - a''(r)  - \frac{(N-1)a'(r)}{r} =0 \quad 0<r<1\] with $ a(1)=0$.  Note  that we can  pass to a subseqence of $r_m$ such that $ r_m \rightarrow r_0 \in (0,1]$ and using the convergence we have, we get 
 $r_0^{\sigma+1} |a'(r_0)|> \E$.  The kernel results we obtained for $\kappa=-p$ in Lemma \ref{kernel_Lt} shows that this kernel is trivial and hence we have a contradiction.



\noindent
\textbf{Case (II).}  We first define 
\[ z_m(r):= r_m^\sigma \left\{ a_{t_m}(r_m r) - a_{t_m}(r_m) \right\}~~ \text{for} ~~ 0<r< \frac{1}{r_m}\]   where $z_m(1)=0~$.
\noindent We have 
\begin{equation}\label{case2_blowup}
- a''_{t_m}(rr_m)  - \frac{(N-1)a_{t_m}'(rr_m)}{rr_m}- \frac{ p a_{t_m}'(rr_m)}{\beta rr_m+ t_m (rr_m)^\xi}=b_m(rr_m)\quad 0<rr_m< 1
\end{equation}
so we get
\begin{equation*}
- {r_m^{\sigma+2}}a''_{t_m}(rr_m)  - \frac{(N-1){r_m^{\sigma+1}}a_{t_m}'(rr_m)}{r}- \frac{ p{r_m^{\sigma+1}} a_{t_m}'(rr_m)}{\beta r+ t_m (r)^\xi (r_m)^{\xi-1}}={r_m^{\sigma+2}}b_m(rr_m).
\end{equation*}
Thus
\begin{equation}\label{z_m_equation}
- z_m''(r)  - \frac{(N-1)z_{m}'(r)}{r}- \frac{ p z_{m}'(r)}{\beta r+  r^\xi t_m(r_m)^{\xi-1}}={r_m^{\sigma+2}}b_m(rr_m)=:g_m(r)\quad  0<r< \frac{1}{r_m}
\end{equation}
where $z_m(1)=0.$   Also note that we have\[\underset{0<r<\infty}{\sup}r^{\sigma+2}|g_m(r)|= \underset{0<rr_m<1}{\sup}(rr_m)^{\sigma+2} |b_{m}(r_mr)| \to 0.\]  Considering the boundary condition and \eqref{k_begative_contract_inequality}, we can write \begin{equation} \label{z_m inequlaity and bound}
     z_m(1)=0,\quad |z'_m(1)|=r_m^{\sigma+1}|a'_{t_m}(r_m)|\ge \epsilon, \quad \text{and} \quad  r^{\sigma+1} |z_m'(r)|=  (rr_m)^{\sigma+1} |a_{t_m}(r_mr)|\le 1. 
\end{equation}  
\noindent
Since $\big|\frac{ p z_{m}'(r)}{\beta r+  r^\xi t_m(r_m)^{\xi-1}}\big|$ is bounded, we can apply a similar argument as before and so we have  $\{z_m\}_m$ is a bounded sequence is $C^{1, \lambda}$ away from the origin and infinity, thus there exists a subsequnce (without renaming )$\{z_{m}\}_{m} $ such that $z_{m} $ converges to some $z$ in $C^{1, \frac{\lambda}{2}}$. 
Now we can pass to the limit in \eqref{z_m_equation}. We need to consider three cases depending on the limiting behaviour of $t_m(r_m)^{\xi-1}$:\\ 
\noindent
(i)  $ t_m(r_m)^{\xi-1} \rightarrow 0$, \\ 
(ii) $ t_m(r_m)^{\xi-1} \rightarrow t \in (0,\infty)$, \\ 
(iii) $ t_m(r_m)^{\xi-1} \rightarrow \infty$. \\
\noindent
Case (i): Assuming that  $ t_m(r_m)^{\xi-1}$ goes to zero, by passing the limit in equation \eqref{z_m_equation}, we get
\begin{equation*}
- z''(r)  - \frac{(N-1)z'(r)}{r}- \frac{ p z'(r)}{\beta r}=0~~~~  0<r< \infty
\end{equation*}
where $z(1)=0$. Since away from the origin and infinity we have the $C^{1, \lambda}$ 
convergence, we can pass the limit in \eqref{z_m inequlaity and bound} . Thus we have  \begin{equation}\label{z_result}
   |z'(1)|\ge \epsilon,
\end{equation} and \[ r^{\sigma+1} |z'(r)|\le 1. \]  Since $z(r)$ is in the required space, we can use the kernel results we obtained in the case of $k=0, $ and $t=0$. So we should have $z(r)=0$ on $0<r<\infty$. But this is a contradiction with \eqref{z_result}.\\
\noindent
Case (ii): 
In this case, we can use a similar approach as case (i) and by passing the limit in equation \eqref{z_m_equation}, we get
\begin{equation*}
- z''(r)  - \frac{(N-1)z'(r)}{r}- \frac{ p z'(r)}{\beta r+t}=0~~~~  0<r< \infty
\end{equation*} and so using the convergence we have obtained, we can pass to the limit in  \eqref{z_m inequlaity and bound}. Thus we have  \begin{equation}\label{z_result1}
   |z'(1)|\ge \epsilon,
\end{equation} and \[ r^{\sigma+1} |z'(r)|\le 1. \]  We again use the Lemma \ref{kernel_Lt} in the case $t \neq 0$ where similarly gives us the result of $z$ being zero. Thus have the same contradiction as case (i). \\
\noindent
Case (iii): When  $ t_m(r_m)^{\xi-1} $ approaches infinity, we get \begin{equation*}
- z''(r)  - \frac{(N-1)z'(r)}{r}=0~~~~  0<r< \infty
\end{equation*}
where $z(1)=0$. We can use the result from Lemma \ref{kernel_Lt} in the case of $t  $ approaching infinity
  which stated we should have $z(r)=0$. Thus again we have a contradiction with \eqref{z_result1}.\\ 
\noindent
The contradictions from these 3 cases prove that we have the estimate \eqref{a_t estomate} where $C$ is independent of $t$. This can complete the result and thus we have shown that $a_t$ satisfies the equation and we have the estimate independent of $t$.
\end{proof}
\section{The non-linear theory}
The goal in this chapter is to show that our nonlinear mapping $T_t(\phi, \psi)$ is into and a contraction. Thus by applying Banach's Fixed Point Theorem we can obtain its fixed point and complete the proof of Theorem \ref{main_theorem}. Before that, we need to show the estimates and the asymptotic results that we will need for the main proof.

\subsection{Estimates}

 \begin{lemma}  \label{lemmainequality}


 Suppose $1<p\leq 2$.  Then there exists some positive $C$ such that for $x, y, z \in\mathbb{R}^N$
\begin{equation}\label{first-inequality estimate}
    \left| |x+y|^p - |x|^p - p |x|^{p-2} x \cdot y \right| \le  C |y|^p.
\end{equation}

\begin{equation}\label{second inequlity estimate}
    \left||x+y|^p-p|x|^{p-2} x\cdot y - |x+z|^p+p|x|^{p-2} x\cdot z\right| \leq C\left(|y|^{p-1}+|z|^{p-1}\right)|y-z|.
\end{equation}

\begin{equation}\label{third inequality estimate}
    \left||x+y|^p-|x+z|^p\right| \leq C\left(|x|^{p-1}+|y|^{p-1}+|z|^{p-1}\right)|y-z|.
\end{equation}

 \end{lemma}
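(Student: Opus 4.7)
The plan is to derive all three inequalities from a single fundamental tool: the H\"older continuity of the map $w \mapsto |w|^{p-2}w$ on $\mathbb{R}^N$, namely the elementary fact that for $1 < p \le 2$ there is a constant $C_0 = C_0(p,N)$ with
\[ \bigl| |w_1|^{p-2}w_1 - |w_2|^{p-2}w_2 \bigr| \le C_0 |w_1 - w_2|^{p-1} \quad \text{for all } w_1,w_2 \in \mathbb{R}^N. \]
This bound is standard and I would cite it (or include a short proof by scaling and compactness). I will also use the sub-additivity $(a+b)^{p-1} \le a^{p-1}+b^{p-1}$ for $a,b \ge 0$, valid because $p-1 \in (0,1]$.

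For \eqref{first-inequality estimate}, I would write
\[ |x+y|^p - |x|^p - p|x|^{p-2}x \cdot y = \int_0^1 \bigl[ p|x+ty|^{p-2}(x+ty) - p|x|^{p-2}x \bigr] \cdot y \, dt, \]
apply the H\"older bound to the bracket to get $\le pC_0 |ty|^{p-1}$, multiply by $|y|$ and integrate in $t$ from $0$ to $1$, yielding the constant $pC_0/p = C_0$ times $|y|^p$.

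For \eqref{second inequlity estimate}, set $A(w) := |x+w|^p - p|x|^{p-2}x\cdot w$ so that $\nabla A(w) = p|x+w|^{p-2}(x+w) - p|x|^{p-2}x$. By the fundamental theorem of calculus along the segment from $z$ to $y$,
\[ A(y) - A(z) = \int_0^1 \bigl[ p|x + z + t(y-z)|^{p-2}(x+z+t(y-z)) - p|x|^{p-2}x \bigr] \cdot (y-z) \, dt. \]
The bracketed quantity is bounded by $pC_0 |z + t(y-z)|^{p-1}$ by the H\"older bound. Using $|z+t(y-z)| \le (1-t)|z| + t|y|$ together with sub-additivity of $s \mapsto s^{p-1}$ gives $|z+t(y-z)|^{p-1} \le (1-t)^{p-1}|z|^{p-1} + t^{p-1}|y|^{p-1}$. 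Integrating in $t$ produces $\tfrac{1}{p}(|z|^{p-1}+|y|^{p-1})$ and hence the desired bound with $C = C_0$.

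For \eqref{third inequality estimate}, the same path-integral idea applies directly to $B(w) := |x+w|^p$, whose gradient is $p|x+w|^{p-2}(x+w)$. Its norm along the segment is $\le p\bigl(|x| + (1-t)|z| + t|y|\bigr)^{p-1} \le p\bigl(|x|^{p-1} + (1-t)^{p-1}|z|^{p-1} + t^{p-1}|y|^{p-1}\bigr)$; integrating against $|y-z|$ gives the claim. The main subtlety is simply verifying the H\"older bound (a) cleanly at the origin and making sure the constants are independent of $x,y,z$; once that is in hand, every inequality is a short integration. No step here is truly delicate, but I would take care to state the scalar inequality $(a+b)^{p-1}\le a^{p-1}+b^{p-1}$ explicitly, since it is what converts the mixed-segment length into the separated form appearing on the right-hand side of \eqref{second inequlity estimate} and \eqref{third inequality estimate}.
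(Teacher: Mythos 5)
Your proposal is correct and complete: the $C^1$ regularity of $w\mapsto|x+w|^p$ for $p>1$ justifies each path integral, the H\"older bound $\bigl||w_1|^{p-2}w_1-|w_2|^{p-2}w_2\bigr|\le C_0|w_1-w_2|^{p-1}$ is the standard estimate for $1<p\le 2$, and the bookkeeping with $\int_0^1 t^{p-1}\,dt=1/p$ and the subadditivity $(a+b)^{p-1}\le a^{p-1}+b^{p-1}$ yields exactly the constants claimed in \eqref{first-inequality estimate}, \eqref{second inequlity estimate} and \eqref{third inequality estimate}. The paper itself omits the proof of this lemma, deferring to the author's thesis \cite{mythesis}, so there is no in-paper argument to compare against; your fundamental-theorem-of-calculus plus H\"older-continuity route is the standard elementary one, and the only point you should make explicit in a final write-up is the proof (or a precise citation) of the vector inequality for $|w|^{p-2}w$, including its validity when one of the arguments is the origin.
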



\begin{proof}
We skip the proof here. The proofs with all the details are avaiblable at \cite{mythesis}.
\end{proof} 
\subsection{Asymptotics}
To prove Theorem \ref{main_theorem}, we will also need some asymptotics of $w_t$.
Note that we had 
\begin{align*}
    w_{t}(r)&=\int_{r}^{1} \dfrac{dy}{({ty^{\xi} + \beta y)}^{\frac{1}{p-1}}}, \quad \text{and} \quad
    w'_t(r)=0- \dfrac{1}{({tr^{\xi} + \beta r)}^{\frac{1}{p-1}}}= \dfrac{-1}{({tr^{\xi} + \beta r)}^{\frac{1}{p-1}}}.
\end{align*}
This means that for $0<r<1$, we have $ |w'_t(r)| \leq \min \{\frac{C_\beta}{r^{\frac{1}{p-1}}}, \frac{1}{t^{\frac{1}{p-1}}r^{N-1}} \} $
where $C_\beta=\frac{1}{\beta^{\frac{1}{p-1}}}.$
This gives us
$r^{\sigma+1}|w'_t(r)| \leq \min \{C_\beta, \frac{1}{t^{\frac{1}{p-1}}r^{N-2-\sigma}} \}. $
We should also note that for any positive $t$, we have
$\lim_{r \to 0}r^{\sigma+1}w'_t(r)=-C_\beta.
$
 To show this, noting that 
 $\sigma+1=\frac{1}{p-1}$ we can write that
\begin{align*}
\lim_{r \to 0}r^{\sigma+1}w'_t(r)=\lim_{r \to 0} \dfrac{-r^{\sigma+1}}{({tr^{\xi} + \beta r)}^{\frac{1}{p-1}}} = \lim_{r \to 0} \dfrac{-r^{\sigma+1}}{r^{\frac{1}{p-1}}({tr^{\xi-1} + \beta )}^{\frac{1}{p-1}}}= \lim_{r \to 0}\frac{-1}{({tr^{\xi-1} + \beta )}^{\frac{1}{p-1}}}=\frac{-1}{\beta^{\frac{1}{p-1}}}=-C_\beta.   
\end{align*}
Also, away from zero, we can see that $w_t(r)$ and $w'_t(r)$ converge uniformly to zero. 

\subsection{The Fixed Point Argument}
We defined the nonlinear mapping $T_t(\phi,\psi)= ( \hat{\phi}, \hat{\psi})$ via 
\begin{equation} \label{non-map}
\left\{
\begin{array}{lr}
-\Delta \hat \phi - p | \nabla w|^{p-2} \nabla w \cdot \nabla \hat \psi= \kappa_1(x) | \nabla w + \nabla \psi|^p + I(\psi)& \text{in}~~ B_1 \backslash \{0\},\\
-\Delta \hat \psi - p | \nabla w|^{p-2} \nabla w \cdot  \nabla \hat \phi= \kappa_2(x) | \nabla w + \nabla \phi|^p + I(\phi)& \text{in}~~ B_1 \backslash \{0\},\\
\phi=\psi=0 & \text{on}~~ \partial B_1,
\end{array}
\right.
\end{equation}
where 
\[ I(\zeta)= | \nabla w + \nabla \zeta|^p - | \nabla w|^p - p | \nabla w|^{p-2} \nabla w \cdot \nabla \zeta.\] 
So for $R>0$, we define the space $\mathcal{F}_R$ as
\[ \mathcal{F}_R:=\left\{ (\phi,\psi) \in X \times X:  \| \phi\|_X, \|  \psi\|_X \le R     \right\}.  \]    We will show that $T_t$ is a contraction on $\mathcal{F}_R$ for suitable $R$ and $t$. Also, note that on this space we have \begin{equation}
    \label{metric of space}
     \|( \phi,\psi)\|_{X \times X}:= \| \phi \|_X + \| \psi \|_X .
\end{equation}
\begin{lemma}\label{into_lemma} 
To show that our mapping $T_t$ is into, we need to show that for all $(\phi,\psi) \in \mathcal{F}_R$, we have $T_t(\phi,\psi) \in \mathcal{F}_R$.
We know that by \eqref{metric of space}, we have
\begin{align*}
    \|T_t(\phi, \psi)\|_{X \times  X} =\|(\hat{\phi}, \hat{\psi} )\|_{X \times X} = \|\hat{\phi}\|_X+ \|\hat{\psi}\|_X.
\end{align*}
First we set the right hand side of \eqref{non-map} to be $H_1(\psi)$ and $H_2(\phi)$ as  $H_1(\psi)=J_t(\psi) +Q_t(\psi)$ and $H_2(\phi)=I_t(\phi) +K_t(\phi)$ where \[J_t(\psi) \coloneqq |\nabla w_t+\nabla \psi|^p-|\nabla w_t|^p-p|\nabla w_t|^{p-2}\nabla w_t\nabla \psi,  \quad\text{and}\quad Q_t(\psi) \coloneqq \kappa_1|\nabla w_t+\nabla \psi|^p ,\]
\[I_t(\phi) \coloneqq |\nabla w_t+\nabla \phi|^p-|\nabla w_t|^p-p|\nabla w_t|^{p-2}\nabla w_t\nabla \phi,  \quad\text{and}\quad K_t(\phi) \coloneqq \kappa_2|\nabla w_t+\nabla \phi|^p. \] 
 To get the result we need, we should prove two statements for both $H_1(\psi)$ and $H_2(\phi)$. 
  First we state them for $H_1(\phi).$
\noindent
 \begin{enumerate}
     \item There is some positive constant $C$ such that for $R \in (0,1),$ $0< \delta <1$, $t>1$, and  $\psi \in B_R \subset X$ one has 
    \[\| \kappa_1|\nabla w_t+\nabla \psi|^p \|_Y \leq C \bigg(R^p+ \underset{|z|<\delta}{\sup}|k_1(z)|+ \frac{1}{t^{\frac{p}{p-1}}\delta^{(N-1)p-\sigma -2}}\bigg). \]
\item There is some positive constant $C$ such that for all $t>1$, $0<R<1$ and $\psi \in B_R$ one has 
    \[ \| |\nabla w_t+\nabla \psi|^p-|\nabla w_t|^p-p|\nabla w_t|^{p-2}\nabla w_t\nabla \psi \|_Y \leq CR^p.\]  
 \end{enumerate}
\begin{proof}
Fix $R, $ and $\psi$ as in the hypothesis and $C$ would be a constant independent of these parameters. We have the estimates 
\begin{equation}
    \label{estimates of into}
(|x|^{\sigma+1}|\nabla\phi(x)|)^p \leq R^p\quad \text{and}\quad (|x|^{\sigma+1}|\nabla w_t(x)|)^p \leq C.
\end{equation}

First note that we have:
\begin{align*}
    \|Q_t(\psi)\|_Y \leq C \underset{0<|x|\leq 1}{\sup} |x|^{\sigma+2}|x|^{-(\sigma+1)p}|\kappa_1|\big((|x|^{\sigma +1}|\nabla  w_t|)^p+(|x|^{\sigma +1}| \nabla \psi |)^p \big). 
\end{align*}
We know that $\sigma +2-(\sigma+1)p=\frac{2-p}{p-1}(1-p)+2-p=0$,
thus we get
\begin{align*}
    \|Q_t(\psi)\|_Y  \leq& C \underset{0<|x|\leq 1}{\sup} |\kappa_1|(|x|^{\sigma +1}|\nabla  w_t|)^p+ \underset{0<|x|\leq 1}{\sup} |\kappa_1|(|x|^{\sigma +1}| \nabla \psi |)^p. 
\end{align*}We saw that $w_{t}(r)=\int_{r}^{1} \dfrac{dy}{({ty^{\xi} + \beta y)}^{\frac{1}{p-1}}},$ so
\begin{align*}
    \|Q_t(\psi)\|_Y  \leq & \underset{0<|x|\leq 1}{\sup} |\kappa_1|\bigg( |x|^{\sigma +1}{(t|x|^{\xi} + \beta |x|)}^{\frac{-1}{p-1}} \bigg)^p+ \underset{0<|x|\leq 1}{\sup} |\kappa_1|(|x|^{\sigma +1}| \nabla \psi |)^p 
\end{align*}
and since $\sigma+1= \frac{1}{p-1}$, we get
\begin{align*}
    \|Q_t(\psi)\|_Y  \leq & \underset{0<|x|\leq 1}{\sup} |\kappa_1| {(t|x|^{\xi-1} + \beta )}^{\frac{-p}{p-1}}+ \underset{0<|x|\leq 1}{\sup} |\kappa_1|(|x|^{\sigma +1}| \nabla \psi |)^p .
\end{align*}
Fix $0< \delta \ll 1$. We have $\kappa_1(x),$ and $ \kappa_2(x) $ are positive continuous functions such that $\kappa_1(0)=\kappa_2(0)=0$. We write the $ \underset{0<|x|\leq 1}{\sup} |\kappa_1(x)| {(t|x|^{\xi-1} + \beta )}^{\frac{-p}{p-1}} + \underset{0<|x|\leq 1}{\sup} |\kappa_1(x)|(|x|^{\sigma +1}| \nabla \phi |)^p $ as a supremum over $B_\delta $ and $\delta < |x| \leq 1$ and this gives us
\begin{equation}
\label{Q_t estimate}
    \|Q_t(\psi)\|_Y  \leq C \bigg(   R^p+\underset{B_\delta}{\sup} |\kappa_2(x)|  + \frac{1}{ t^{\frac{p}{p-1} }|\delta|^{(\xi-1)\frac{p}{p-1}}} \bigg).
\end{equation}
For $J_t(\psi)$, we apply the estimate  \eqref{first-inequality estimate} where we set $x=\nabla w_t$ and $y= \nabla \psi$. So we get
\begin{align}\label{J_t estimate}
    \|J_t(\psi)\|_Y 
    \leq   \underset{0< |x| \leq 1 }{\sup} |x|^{\sigma+2}|\nabla \psi|^p  \leq  \underset{0< |x| \leq 1 }{\sup} (|x|^{\sigma+1}|\nabla \psi|)^p \leq C R^p.
\end{align}
By \eqref{Q_t estimate} and \eqref{J_t estimate} we can deduce 
\begin{align*}
    \| H_1( \psi)\|_Y
    \leq  \nonumber C \bigg(   R^p+ \underset{B_\delta}{\sup} |\kappa_1|(x)  + \frac{1}{ t^{\frac{p}{p-1} }|\delta|^{(\xi-1)\frac{p}{p-1}}} \bigg) .
\end{align*}
\end{proof}
We now prove similar statements for $H_2(\phi)$.
 \begin{enumerate}
     \item There is some positive constant $C$ such that for $R \in (0,1),$ $0< \delta <1$, $t>1$, and  $\phi \in B_R $ one has 
    \[\| \kappa_2|\nabla w_t+\nabla \phi|^p \|_Y \leq C \bigg(R^p+ \underset{|z|<\delta}{\sup|\kappa_2(z)|}+ \frac{1}{t^{\frac{p}{p-1}}\delta^{(N-1)p-\sigma -2}}\bigg). \]
\item There is some positive constant $C$ such that for all $t>1$, $0<R<1$ and $\phi \in B_R$ one has 
    \[ \| |\nabla w_t+\nabla \phi|^p-|\nabla w_t|^p-p|\nabla w_t|^{p-2}\nabla w_t\nabla \phi \|_Y \leq CR^p.\]  
 \end{enumerate}
\end{lemma}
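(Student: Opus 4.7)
The two estimates for $H_2(\phi)=I_t(\phi)+K_t(\phi)$ are structurally identical to those just established for $H_1(\psi)=J_t(\psi)+Q_t(\psi)$, obtained by the symbol substitutions $\kappa_1\mapsto\kappa_2$ and $\psi\mapsto\phi$. Since no structural property of $\kappa_1$ beyond non-negativity, continuity, and $\kappa_1(0)=0$ was used (and the same is assumed for $\kappa_2$), the plan is to transcribe the preceding proof verbatim with these relabelings; I will only indicate the main computational moves.

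For the bound on $I_t(\phi)$ (statement 2), the plan is to invoke inequality \eqref{first-inequality estimate} from Lemma \ref{lemmainequality} with $x=\nabla w_t$ and $y=\nabla\phi$, which gives pointwise $|I_t(\phi)|\le C|\nabla\phi|^p$. Multiplying by $|x|^{\sigma+2}$ and using the identity $\sigma+2=(\sigma+1)p$ (which follows from $\sigma+1=\tfrac{1}{p-1}$), one obtains
\[
\|I_t(\phi)\|_Y \le \sup_{0<|x|\le 1}(|x|^{\sigma+1}|\nabla\phi|)^p \le CR^p,
\]
since $\phi\in B_R$ gives $|x|^{\sigma+1}|\nabla\phi|\le R$.

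For the bound on $K_t(\phi)$ (statement 1), the plan is: use $|\nabla w_t+\nabla\phi|^p\le C(|\nabla w_t|^p+|\nabla\phi|^p)$, multiply by $|x|^{\sigma+2}$, and use the same identity $\sigma+2-(\sigma+1)p=0$ to pull the $|x|$ factor inside the parentheses; this gives
\[
\|K_t(\phi)\|_Y \le C\sup_{0<|x|\le 1}|\kappa_2(x)|\left((|x|^{\sigma+1}|\nabla w_t|)^p+(|x|^{\sigma+1}|\nabla\phi|)^p\right).
\]
Now substitute the explicit form $|x|^{\sigma+1}|\nabla w_t(x)|=(t|x|^{\xi-1}+\beta)^{-1/(p-1)}$ (coming from the formula for $w_t'$). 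Fix $0<\delta\ll 1$ and split the supremum into $|x|\le\delta$ and $\delta<|x|\le 1$. On $B_\delta$ one uses only the uniform bound $(|x|^{\sigma+1}|\nabla w_t|)^p\le C_\beta^p$ and absorbs the rest into $\sup_{|z|<\delta}|\kappa_2(z)|$; the $|\nabla\phi|^p$ contribution gives $CR^p$ as before. On $\delta<|x|\le 1$, the continuous $\kappa_2$ is bounded and one estimates $(t|x|^{\xi-1}+\beta)^{-p/(p-1)}\le t^{-p/(p-1)}\delta^{-(\xi-1)p/(p-1)}$, and one checks the exponent identity $(\xi-1)p/(p-1)=(N-1)p-\sigma-2$ (using $\xi=(p-1)(N-1)$ and $\sigma=\tfrac{2-p}{p-1}$) to match the form in the statement.

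There is no genuine obstacle, as every ingredient has already appeared in the $H_1(\psi)$ proof; the only item requiring any care is the arithmetic verification of the exponent identity $(\xi-1)p/(p-1)=(N-1)p-\sigma-2$, which confirms that the two formulations of the decay rate in $\delta$ agree.
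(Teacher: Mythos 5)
Your proposal is correct and follows essentially the same route as the paper: the paper's own proof of the $H_2(\phi)$ estimates is precisely the transcription of the $H_1(\psi)$ argument with $\kappa_1\mapsto\kappa_2$, $\psi\mapsto\phi$, using \eqref{first-inequality estimate} for $I_t(\phi)$, the identity $\sigma+2=(\sigma+1)p$, the explicit form of $|x|^{\sigma+1}|\nabla w_t|$, and the split of the supremum over $B_\delta$ and $\delta<|x|\le 1$. Your additional check that $(\xi-1)\tfrac{p}{p-1}=(N-1)p-\sigma-2$ is correct and merely reconciles the two equivalent ways the decay exponent in $\delta$ is written.
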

\begin{proof}
Fix $R,$ and $\phi$ as in the hypothesis and $C$ would be a constant independent of these parameters and we again use the estimates $(|x|^{\sigma+1}|\nabla\psi(x)|)^p \leq R^p$, and $(|x|^{\sigma+1}|\nabla w_t(r)|)^p \leq C.$
First note that we have:
\begin{align*}
    \|k_t(\phi)\|_Y  \leq C \underset{0<|x|\leq 1}{\sup} |x|^{\sigma+2}|x|^{-(\sigma+1)p}|\kappa_2|\big((|x|^{\sigma +1}|\nabla  w_t|)^p+(|x|^{\sigma +1}| \nabla \phi |)^p \big). 
\end{align*}
Since $\sigma+1= \frac{1}{p-1}$ and by definition of $\omega_t$ and $\sigma +2-(\sigma+1)p=0$,
thus we get:
\begin{align*}
    \|k_t(\phi)\|_Y  \leq & \underset{0<|x|\leq 1}{\sup} |\kappa_2| {(t|x|^{\xi-1} + \beta )}^{\frac{-p}{p-1}}+ \underset{0<|x|\leq 1}{\sup} |\kappa_2|(|x|^{\sigma +1}| \nabla \phi |)^p .
\end{align*}
\noindent
Similarly, fix $0< \delta \ll 1$. We know that $\kappa_1(x), \kappa_2(x) >0$ are continuous and $\kappa_1(0)=\kappa_2(0)=0$. We write the $ \underset{0<|x|\leq 1}{\sup} |\kappa_2(x)| {(t|x|^{\xi-1} + \beta )}^{\frac{-p}{p-1}} + \underset{0<|x|\leq 1}{\sup} |\kappa_2(x)|(|x|^{\sigma +1}| \nabla \phi |)^p $ as a supremum over $B_\delta $ and $\delta < |x| \leq 1$. Noting that $(\xi-1)\frac{p}{p-1}>0$, we get
\begin{align}\label{K_t estimate}
    \|k_t(\phi)\|_Y  \leq C \bigg(   R^p+\underset{B_\delta}{\sup} |\kappa_2(x)|  + \frac{1}{ t^{\frac{p}{p-1} }|\delta|^{(\xi-1)\frac{p}{p-1}}} \bigg).
\end{align}
For $I_t(\phi)$, we can apply \eqref{first-inequality estimate}, where we set $x=\nabla w_t$ and $y= \nabla \phi$. So we get:
\begin{align}\label{I_t estimate}
    \|I_t(\phi)\|_Y  \leq   \nonumber \underset{0< |x| \leq 1 }{\sup} |x|^{\sigma+2}|\nabla \phi|^p  \leq  \underset{0< |x| \leq 1 }{\sup} (|x|^{\sigma+1}|\nabla \phi|)^p \leq C R^p.
\end{align}
By \eqref{K_t estimate}, we obtain 
\begin{align*}
    \| H_2( \phi)\|_Y 
    \leq   C \bigg(   R^p+ \underset{B_\delta}{\sup} |\kappa_2|(x)  + \frac{1}{ t^{\frac{p}{p-1} }|\delta|^{(\xi-1)\frac{p}{p-1}}} \bigg) .
\end{align*}\end{proof}

\begin{lemma}\label{contraction_lemma}  
    To show our mapping is a contraction, we are going to show that if we have $0<R<1, \ 0<\delta<1, \ t>1,$ and $\phi_i, \psi_i \in B_R$ and we set $T_t(\phi_i, \psi_i)=(\hat{\phi_i},\hat{\psi_i})$,  there exists some $C>0$ such that 
    \[\| T_t(\phi_2,\psi_2)-T_t(\phi_1,\psi_1)\|_{X \times X} \leq C \big{\{} R^{p-1} +\underset{B_\delta}{\sup}\{|\kappa_1| +|\kappa_2|\}+ \frac{1}{t \delta^{\xi-1}} \big{\}} \|({\phi_2}, {\psi_2}) - ({\phi_1}, {\psi_1})\|_{X \times X}.\]
    \end{lemma}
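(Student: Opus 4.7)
The plan is to reduce the contraction estimate to the linear theory already established in Proposition \ref{the_real_main_result}, then bound the resulting right-hand sides via the pointwise inequalities in Lemma \ref{lemmainequality}. Subtracting the two systems defining $T_t(\phi_i,\psi_i)=(\hat\phi_i,\hat\psi_i)$, the pair $(\hat\phi_2-\hat\phi_1,\hat\psi_2-\hat\psi_1)$ solves a linear system of the form \eqref{linear_beg2} with right-hand sides
\[
F := \kappa_1\bigl(|\nabla w_t+\nabla\psi_2|^p-|\nabla w_t+\nabla\psi_1|^p\bigr)+\bigl(I(\psi_2)-I(\psi_1)\bigr),
\]
\[
G := \kappa_2\bigl(|\nabla w_t+\nabla\phi_2|^p-|\nabla w_t+\nabla\phi_1|^p\bigr)+\bigl(I(\phi_2)-I(\phi_1)\bigr).
\]
By Proposition \ref{the_real_main_result}, $\|\hat\phi_2-\hat\phi_1\|_X+\|\hat\psi_2-\hat\psi_1\|_X\le C(\|F\|_Y+\|G\|_Y)$, so it suffices to show each of $\|F\|_Y,\|G\|_Y$ is bounded by the required factor times $\|(\phi_2,\psi_2)-(\phi_1,\psi_1)\|_{X\times X}$.

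For the term $I(\psi_2)-I(\psi_1)$, I will apply inequality \eqref{second inequlity estimate} with $x=\nabla w_t$, $y=\nabla\psi_2$, $z=\nabla\psi_1$, obtaining the pointwise bound $C(|\nabla\psi_2|^{p-1}+|\nabla\psi_1|^{p-1})|\nabla(\psi_2-\psi_1)|$. Multiplying by $|x|^{\sigma+2}$ and using the identity $\sigma+2=(\sigma+1)p$ together with $|x|^{\sigma+1}|\nabla\psi_i|\le R$ and $|x|^{\sigma+1}|\nabla(\psi_2-\psi_1)|\le\|\psi_2-\psi_1\|_X$, this contributes at most $C R^{p-1}\|\psi_2-\psi_1\|_X$. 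For the $\kappa_1$ term I will apply \eqref{third inequality estimate} with the same choice of $x,y,z$, yielding the pointwise bound $C|\kappa_1(x)|(|\nabla w_t|^{p-1}+|\nabla\psi_2|^{p-1}+|\nabla\psi_1|^{p-1})|\nabla(\psi_2-\psi_1)|$. The $|\nabla\psi_i|^{p-1}$ contributions are handled exactly as above and produce $CR^{p-1}\|\psi_2-\psi_1\|_X$.

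The main obstacle is controlling the $|\kappa_1||\nabla w_t|^{p-1}$ piece, since $|\nabla w_t|^{p-1}\sim C_\beta^{p-1}/r$ near the origin and a uniform-in-$t$ bound cannot be extracted there. The remedy is the dichotomous bound recalled in the asymptotics subsection: $|\nabla w_t(r)|\le\min\{C_\beta r^{-1/(p-1)},\,t^{-1/(p-1)}r^{-(N-1)}\}$. Splitting the supremum in $\|\cdot\|_Y$ into $B_\delta$ and $B_1\setminus B_\delta$: on $B_\delta$ I use the first bound together with $|\kappa_1|\le\sup_{B_\delta}|\kappa_1|$, which after multiplying by $|x|^{\sigma+2}$ and factoring out $|x|^{\sigma+1}|\nabla(\psi_2-\psi_1)|\le\|\psi_2-\psi_1\|_X$ leaves a factor $|x|\cdot|x|^{-1}=1$, giving $C\sup_{B_\delta}|\kappa_1|\cdot\|\psi_2-\psi_1\|_X$; on $B_1\setminus B_\delta$ I use the second bound, which produces a factor $|x|\cdot t^{-1}|x|^{-(N-1)(p-1)}=t^{-1}|x|^{-(\xi-1)}\le t^{-1}\delta^{-(\xi-1)}$, since $(N-1)(p-1)-1=\xi-1$.

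Combining the three contributions gives $\|F\|_Y\le C\bigl(R^{p-1}+\sup_{B_\delta}|\kappa_1|+t^{-1}\delta^{-(\xi-1)}\bigr)\,\|\psi_2-\psi_1\|_X$. The analogous bound for $\|G\|_Y$ follows by swapping the roles of $(\phi,\psi)$ and $(\kappa_1,\kappa_2)$. Summing the two and invoking the a priori linear estimate completes the proof. The only delicate step is the $\delta$-splitting just described; everything else is bookkeeping using the homogeneity exponent identity $\sigma+2=(\sigma+1)p$ and the $X$-norm bounds on $\phi_i,\psi_i$.
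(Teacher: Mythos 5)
Your proposal is correct and follows essentially the same route as the paper: subtract the two defining systems, invoke the linear estimate of Proposition \ref{the_real_main_result}, bound the $I$-differences via \eqref{second inequlity estimate} (giving the $R^{p-1}$ term) and the $\kappa_i$-weighted differences via \eqref{third inequality estimate}, with the same $\delta$-splitting of the $\kappa_i|\nabla w_t|^{p-1}$ piece producing $\sup_{B_\delta}|\kappa_i|+t^{-1}\delta^{-(\xi-1)}$. The only cosmetic difference is that you use the dichotomous bound $|\nabla w_t|\le\min\{C_\beta r^{-1/(p-1)},t^{-1/(p-1)}r^{-(N-1)}\}$ where the paper works directly with $|x|\,|\nabla w_t|^{p-1}=(t|x|^{\xi-1}+\beta)^{-1}$, which yields the identical estimate.
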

\begin{proof}
 First note that we have:
    \begin{align*}
        \|T_t(\phi_2, \psi_2)-T_t(\phi_1, \psi_1)\|_{X \times X} = &\|(\hat{\phi}_2, \hat{\psi}_2)- (\hat{\phi}_1, \hat{\psi}_1)\|_{X \times X}  =  \|[(\hat{\phi}_2- \hat{\phi}_1), (\hat{\psi}_2- \hat{\psi}_1)]\|_{X \times X}\\ &= \| \hat{\phi }_1- \hat{\phi}_2\|_X + \| \hat{\psi }_1 -\hat{\psi}_2\|_X.
    \end{align*}
   We set the right hand sides of \eqref{non-map} to be $H_1(\psi)$ and $H_2(\phi)$ where  
 \[H_1(\psi) \coloneqq |\nabla w_t+\nabla \psi|^p-|\nabla w_t|^p-p|\nabla w_t|^{p-2}\nabla w_t\nabla \psi +\kappa_1|\nabla w_t+\nabla \psi|^p, \]
\[H_2(\phi) \coloneqq |\nabla w_t+\nabla \phi|^p-|\nabla w_t|^p-p|\nabla w_t|^{p-2}\nabla w_t\nabla \phi+ \kappa_2|\nabla w_t+\nabla \phi|^p. \] For $H_1(\psi)$ let us define $I_1$ and $K_1$ such that  \[ H_1(\psi_2)-H_1(\psi_1)= I_1 +k_1\]
    where
    \begin{equation}
    \label{I_1 equation}
    I_1(\psi_1, \psi_2) \coloneqq  |\nabla w_t+\nabla \psi_2|^p-p|\nabla w_t|^{p-2}\nabla w_t\nabla \psi_2 - |\nabla w_t+\nabla \psi_1|^p+p|\nabla w_t|^{p-2}\nabla w_t\nabla \psi_1 
\end{equation}and 
\begin{equation}
    \label{K_1 equation}
    k_1(\psi_1, \psi_2)  \coloneqq \kappa_1(x) (|\nabla w_t+\nabla \psi_2|^p- |\nabla w_t+\nabla \psi_1|^p).
\end{equation}
Similarly, we write $H_2(\phi)$ as
    \[ H_2(\phi_2)-H_2(\phi_1)= J_2 +Q_2\]
    where
    \[I_2(\phi_1, \phi_2) \coloneqq  |\nabla w_t+\nabla \phi_2|^p-p|\nabla w_t|^{p-2}\nabla w_t\nabla \phi_2 - |\nabla w_t+\nabla \phi_1|^p+p|\nabla w_t|^{p-2}\nabla w_t\nabla \phi_1 \]
    \[ Q_2(\phi_1, \phi_2)  \coloneqq \kappa_2(x) (|\nabla w_t+\nabla \phi_2|^p- |\nabla w_t+\nabla \phi_1|^p). \]
    We prove two claims for both $H_1$ and $H_2$ to show that $T_t$ is a contraction. First we state and prove our claims for
 $H_1(\psi)$.\\
  \textbf{Claim 1:} There exists a positive constant  $\hat C$ such that for $0<R<1$, $t \geq 0$ and $\phi_i, \psi_i \in B_R$  we have
\[\underset{0<|x|\leq 1 }{\sup} |x|^{\sigma+2}|k_1(\psi_1, \psi_2) | \leq  \hat{C}\ \big( \underset{B_\delta}{\sup} \ \kappa_1(x)+ \frac{1}{t\delta^{\xi-1}}+2R^{p-1}  \big) \|({\phi_2}, {\psi_2}) - ({\phi_1}, {\psi_1})\|_{X \times X}. \]
\textbf{Claim 2. }
There exists a positive constant $\Tilde{C}$ such that for $0<R<1$, $t>1$, and $\phi_i , \psi_i\in B_R$ we have:
\[\underset{0<|x| \leq 1 }{ \sup} |x|^{\sigma+2} |I_1(\psi_1, \psi_2) | \leq  \Tilde{C} 2 R^{p-1}   \|({\phi_2}, {\psi_2}) - ({\phi_1}, {\psi_1})\|_{X \times X}. \]
 \textit{Proof of claim 1.} (We will skip some details of the following proof. The proof along with all the details are available at \cite{mythesis})
We use \eqref{integral_solution} and \eqref{K_1 equation} and apply \eqref{third inequality estimate} where we take $x=\nabla w_t$, $y= \nabla \psi_2$ and $z= \nabla \psi_1$  Also we note that  $(p-1)(\sigma+1)=1$, thus we have:\begin{align*}
&\underset{0<|x|\leq 1 }{\sup} |x|^{\sigma+2}|k_1(\psi_1, \psi_2) | \leq  \underset{0<|x|\leq 1 }{\sup} |x|^{\sigma+2} \kappa_1(x) \big((|\nabla w_t+\nabla \psi_2|^p- |\nabla w_t+\nabla \psi_1|^p) \big)\\ &
   \leq   C_1  \underset{0<|x|\leq 1}{\sup}  \bigg(
    \frac{\kappa_1(x)}{t|x|^{\xi-1} +\beta} + 2C_1 R^{p-1}  \kappa_1(x)  \bigg)\|\psi_2 -\psi_1\|_X. 
\end{align*}

\noindent
Let $0< \delta <1$, so we find that:
\begin{align}
\label{delta estimate}\underset{0<|x|\leq 1}{\sup} 
   \frac{\kappa_1(x) }{t|x|^{\xi-1} +\beta} \leq   \underset{0<|x|\leq \delta}{\sup} 
   \frac{\kappa_1(x) }{t|x|^{\xi-1} +\beta}  +  \underset{\delta<|x|\leq 1}{\sup} 
    \frac{\kappa_1(x) }{t|x|^{\xi-1} +\beta}  
    \leq  \ C \underset{B_\delta}{\sup} \ \kappa_1(x)+ \frac{C}{t\delta^{\xi-1}}.
\end{align}
    Thus, there exists a positive constant $\hat{C}$ such that
\[\underset{0<|x|\leq 1 }{\sup} |x|^{\sigma+2}|k_1(\psi_1, \psi_2) | \leq \hat{C}\ \big( \underset{B_\delta}{\sup} \ \kappa_1(x)+2R^{p-1}+ \frac{1}{t\delta^{\xi-1}}  \big)\|\psi_2 -\psi_1\|_X.   \]
We should note that 
\begin{equation} \label{contraction_norm}
    \|\psi_2-\psi_1\|_X \leq \|({\phi_2}, {\psi_2}) - ({\phi_1}, {\psi_1})\|_{X \times X}= \|\psi_2-\psi_1\|_X + \|\phi_2-\phi_1\|_X
\end{equation}
which means
\begin{equation}
    \label{K_1 result} \underset{0<|x|\leq 1 }{\sup} |x|^{\sigma+2}|k_1(\psi_1, \psi_2)|  \leq \hat{C}\ \big( \underset{B_\delta}{\sup} \ \kappa_1(x)+2R^{p-1}+ \frac{1}{t\delta^{\xi-1}} \big) \|({\phi_2}, {\psi_2}) - ({\phi_1}, {\psi_1})\|_{X \times X} .
\end{equation}

$\hfill \Box$\\
\textit{Proof of claim 2.}
We apply \eqref{third inequality estimate} where we set $\nabla w_t = x, \ \nabla \psi_2 =y, \ \nabla\psi_1 = z$, thus we have
\begin{align*}
     \underset{0<|x| \leq 1 }{ \sup} |x|^{\sigma+2} |I_1(\psi_1, \psi_2) |  \leq  C \underset{0<|x|\leq 1}{\sup}  \big( (|x|^{(\sigma+1)}|\nabla \psi_2|^{p-1})+ (|x|^{(\sigma-1)}|\nabla \psi_1|^{p-1}) \big) \|\psi_2 - \psi_1\|_X  \leq  \Tilde{C} 2 R^{p-1}   \|\psi_2 - \psi_1\|_X .
\end{align*} 
Thus by \eqref{contraction_norm}, we can write
\begin{equation}
    \label{I_1 result} \underset{0<|x| \leq 1 }{ \sup} |x|^{\sigma+2} |I_1(\psi_1, \psi_2) |  \leq  \Tilde{C} 2 R^{p-1}\|({\phi_2}, {\psi_2}) - ({\phi_1}, {\psi_1})\|_{X \times X}  .  
\end{equation}
By \eqref{K_1 result} and \eqref{I_1 result}, we can deduce that
\begin{equation}\label{H_1 result}
    \|\hat{\psi}_2-\hat{\psi}_1\|_X \leq  \hat{C}\ \big( \underset{B_\delta}{\sup} \ \kappa_1(x)+ \frac{1}{t\delta^{\xi-1}}+2R^{p-1} \big) \|({\phi_2}, {\psi_2}) - ({\phi_1}, {\psi_1})\|_{X \times X}.
\end{equation}

$\hfill \Box$\\
 We now state similar claims for $H_2(\phi)$. \\
     \textbf{Claim 3:} There exists a positive constant  $\hat C$ such that for $0<R<1$, $t \geq 0$ and $\phi_i, \psi_i \in B_R$ 
\[\underset{0<|x|\leq 1 }{\sup} |x|^{\sigma+2}|Q_2(\phi_1, \phi_2) | \leq  \hat{C}\ \big( \underset{B_\delta}{\sup} \ \kappa_2(x)+2R^{p-1}+ \frac{1}{t\delta^{\xi-1}}  \big)\|(\phi_2, \psi_2) -(\phi_1, \psi_1)\|_{X \times X}. \]
\noindent
 \textbf{Claim 4:} There exists $\Tilde{C}>0$ such that for $0<R<1$, $t>1$, and $\phi_i , \psi_i\in B_R$ we have:
\[\underset{0<|x| \leq 1 }{ \sup} |x|^{\sigma+2} |J_2(\phi_2,\phi_2) | \leq  \Tilde{C} 2 R^{p-1}  \|(\phi_2, \psi_2) -(\phi_1, \psi_1)\|_{X \times X}. \]
\noindent
\textit{Proof of claim 3.}
With a similar proof to \eqref{K_1 result}, we can apply \eqref{third inequality estimate} where we set $x=\nabla w_t$, $y= \nabla \psi_2$ and $z= \nabla \psi_1$ and we have:
 \begin{align*}&\underset{0<|x|\leq 1 }{\sup} |x|^{\sigma+2}|Q_1(\phi_1, \phi_2) | \leq  \underset{0<|x|\leq 1 }{\sup} |x|^{\sigma+2} \kappa_2(x) \big((|\nabla w_t+\nabla \phi_2|^p- |\nabla w_t+\nabla \phi_1|^p) \big)\\ \leq &  C_1  \underset{0<|x|\leq 1}{\sup}  \bigg(
    \frac{\kappa_2(x)}{t|x|^{\xi-1} +\beta} + 2C_1 R^{p-1}  \kappa_2(x)  \bigg)\|\phi_2 -\phi_1\|_X. 
 \end{align*}
Similar to \eqref{delta estimate}, by letting $0< \delta <1$, we have:
\begin{align*}
\underset{0<|x|\leq 1}{\sup} 
   \frac{\kappa_2(x) }{t|x|^{\xi-1} +\beta} \leq & \ C \ \underset{B_\delta}{\sup} \ \kappa_2(x)+ \frac{C}{t\delta^{\xi-1}}.
\end{align*}
 Thus,  by \eqref{contraction_norm} there exists a positive constant $\hat{C}$ such that
\begin{equation}
    \label{Q_1 result} \underset{0<|x|\leq 1 }{\sup} |x|^{\sigma+2}|Q_1(\phi_1, \phi_2)|  \leq \hat{C}\ \big( \underset{B_\delta}{\sup} \ \kappa_2(x)+2R^{p-1}+ \frac{1}{t\delta^{\xi-1}} \big) \|({\phi_2}, {\psi_2}) - ({\phi_1}, {\psi_1})\|_{X \times X} .
\end{equation}
\noindent \textit{Proof of claim 4} Similar to the proof of \eqref{I_1 result}, we can apply \eqref{second inequlity estimate} where we take  $\nabla w_t = x, \ \nabla \psi_2 =y, \ \nabla\psi_1 = z$, and thus we have
\begin{align*}
     \underset{0<|x| \leq 1 }{ \sup} |x|^{\sigma+2} |J_1(\phi_1, \psi_2) |  & \leq  C \underset{0<|x|\leq 1}{\sup}  \big( (|x|^{(\sigma+1)}|\nabla \phi_2|^{p-1})+ (|x|^{(\sigma-1)}|\nabla \phi_1|^{p-1}) \big) \|\phi_2 - \phi_1\|_X  \leq  \Tilde{C} 2 R^{p-1}   \|\phi_2 - \phi_1\|_X .
\end{align*} 
So by \eqref{contraction_norm}, we can write
\begin{equation}
    \label{J_1 result} \underset{0<|x| \leq 1 }{ \sup} |x|^{\sigma+2} |J_1(\phi_1, \phi_2) |  \leq  \Tilde{C} 2 R^{p-1}\|({\phi_2}, {\psi_2}) - ({\phi_1}, {\psi_1})\|_{X \times X}  .  
\end{equation}
By \eqref{Q_1 result} and \eqref{J_1 result}, we can deduce that
\begin{equation}\label{H_2 result}
    \|\hat{\phi}_2-\hat{\phi}_1\|_X \leq  \hat{C}\ \big( \underset{B_\delta}{\sup} \ \kappa_2(x)+ \frac{1}{t\delta^{\xi-1}}+2R^{p-1} \big) \|({\phi_2}, {\psi_2}) - ({\phi_1}, {\psi_1})\|_{X \times X}.
\end{equation}
So by \eqref{H_1 result} and \eqref{H_2 result}, we have
\begin{align*}
&\|T_t(\phi_2,\psi_2)-T_t(\phi_1,\psi_1)\|_{X \times X} = \| \hat{\phi }_1- \hat{\phi}_2\|_X + \| \hat{\psi }_1- \hat{\psi}_2\|_X 
   \\&  \leq C \big( \underset{B_\delta}{\sup}\{\kappa_1+ \kappa_2\}+2R^{p-1} + \frac{1}{t\delta^{\xi-1}} \big)  \|({\phi_2}, {\psi_2}) - ({\phi_1}, {\psi_1})\|_{X \times X}.
\end{align*}
\end{proof}
\subsection{Proof of the main theorem }
\textbf{Proof of Theorem \eqref{main_theorem} } We can now complete the proof of our main theorem. Recall that we want to find some $\phi$ and $\psi$ which satisfy \eqref{linear_syst} such that  $u(x)=w_t(x)+ \phi(x), $ and $  v(x)=w_t(x)+ \psi(x) $ satisfy equation \eqref{eq_syst}. We will show that the mapping $T_t$ is a contraction on $\mathcal{F}_R$ for suitable $0<R<1$, and large $t$.\\ 
\textit{Into}. Let $0<R<1$, $0<\delta<1$, $t>1$, and  $\phi, \psi \in B_R$. Set $T_t(\phi, \psi)= (\hat{\phi}, \hat{\psi})$. Then by Lemma \ref{into_lemma}, there exists some $C>0$ (independent of the parameters) such that 
\begin{align*}
    \| T_t(\psi, \phi)\|_X= \|( \hat{\psi},\hat{ \phi})\|_X \leq &  C \bigg( R^p +\underset{B_\delta}{\sup} \{|\kappa_1|(x)+|\kappa_2|(x)|\} +   \frac{1}{ t^{\frac{p}{p-1} }|\delta|^{(\xi-1)\frac{p}{p-1}}} \bigg) .
\end{align*}
 Hence, for $(\hat{\phi}, \hat{\psi}) $ to be in $\mathcal{F}_R$, it is sufficient to have 
 \begin{equation}\label{into-inequality}C \bigg(  R^p +\underset{B_\delta}{\sup} \{|\kappa_1|(x)+|\kappa_2|(x)|\} +  \frac{1}{ t^{\frac{p}{p-1} }|\delta|^{(\xi-1)\frac{p}{p-1}}} \bigg) \leq R.\end{equation}
\noindent
\textit{Contraction}. Let $0<R<1$, $0<\delta<1$, and   $t>1$, also $\phi_i, \psi_i \in B_R$. Set $T_t(\phi_i, \psi_i)= (\hat{\phi_i}, \hat{\psi_i})$. Then by Lemma \ref{contraction_lemma}  we have 
\begin{align*}
    \|(\hat{\phi_2}, \hat{\psi_2})- (\hat{\phi_1}, \hat{\psi_1})\|_{X \times X}\leq  C \big( \underset{B_\delta}{\sup}(\kappa_1+ \kappa_2)+2R^{p-1} + \frac{1}{t\delta^{\xi-1}} \big)  \|({\phi_2}, {\psi_2}) - ({\phi_1}, {\psi_1})\|_{X \times X}.
\end{align*}
Hence, for $T_t$ to be a contraction, it is sufficient to have \begin{equation}\label{contraction_inequality}
 C \big( \underset{B_\delta}{\sup}(\kappa_1+ \kappa_2) +2R^{p-1}+ \frac{1}{t\delta^{\xi-1}} \big)   \leq \frac{1}{2}.\end{equation} 
 We now choose the parameters. Note that we can satisfy both \eqref{into-inequality} and \eqref{contraction_inequality} by first taking $R>0$ sufficiently small, similarly  we take $\delta>0$ sufficiently small and then we take $t$ large enough. Thus we obtain that $T_t$ is into and a contraction.
We can now apply Banach's Fixed Point Theorem to find the fixed point $(\phi, \psi)$ of $T_t$ on $\mathcal{F}_R$ and hence $(\phi, \psi
)$ satisfies \eqref{linear_syst}. Thus $u_t=u(x)=w_t(x)+ \phi(x)$, and $v_t=v(x)=w_t(x)+ \psi(x)$ are solutions of \eqref{eq_syst} provided $(\phi,\psi)$ satisfies \eqref{linear_syst}. \noindent We need to verify that we can obtain a nonzero positive solution. Note that we have 
\begin{align*}
    |x|^{\sigma+1}|\nabla u_t(x)| \geq r^{\sigma+1} |w_t'(r)|-|x|^{\sigma+1}|\nabla\phi(x)| \geq r^{\sigma+1}|w_t'(r)|-R,\\
    |x|^{\sigma+1}|\nabla v_t(x)| \geq r^{\sigma+1} |w_t'(r)|-|x|^{\sigma+1}|\nabla\psi(x)| \geq r^{\sigma+1}|w_t'(r)|-R
\end{align*}
since $\phi, \psi \in B_R$. Recall that for all positive $t$, we have ($\lim_{r \to 0}r^{\sigma+1}w'_t(r)= -C_\beta$) and hence by taking $R>0$ sufficiently small, we can see $u_t,v_t$ are nonzero when for some small positive $\epsilon$, we have $0<|x|\leq\epsilon$. We want to show that both $u_t$ and $v_t$ are positive on $B_1 \backslash\{0\}$.
For all positive $t$, in the equation of \eqref{integral_solution}, we can find some $z \in (0,1)$ such that we have $ty^{\xi-1}<1$ for any $y$ that satisfies $0<y<z$.
 Since we have $\frac{-1}{p-1}+1= \sigma$, we get:
\begin{align*}
    w_t=\int_r^1 \frac{1}{(ty^{\xi}+\beta y)^{\frac{1}{p-1}}} dy \geq \int_r^{z} \frac{1}{(y(ty^{\xi-1}+\beta) )^{\frac{1}{p-1}}} dy \geq \int_r^z \frac{1}{ y^{\frac{1}{p-1}}(1+\beta)^{\frac{1}{p-1}}} \geq C  \big[-\sigma  y^{-\sigma}\big]_r^z=C\big[\frac{1}{r^{\sigma}}-\frac{1}{z^{\sigma}}\big] .
\end{align*} 
Thus there exists some positive constant $C=C_{(\beta,p)}$ such that for all $0<r<z$, we have
\[r^{\sigma}w_t \geq {C(1-(\frac{r}{z})}^{\sigma})\] 
and we know  $u_t=w_t +\phi $, and $ v_t=w_t +\psi$
where $\phi, \psi $ are in $X$.
Thus near the origin when $r  $ goes to zero, we see that $u_t,v_t$ are positive.
Now for $\epsilon<|x|<1$, note that since $\kappa_i \geq0$, we have $- \Delta u_t = (1+\kappa_1(x)) | \nabla v|^p \geq 0$, and $-\Delta v= (1+\kappa_2(x)) | \nabla u|^p \geq 0$.
\noindent
Thus, by applying the maximum principle, we can say that $u_t$, $v_t$ are both positive on $\epsilon <|x| < 1$. It verifies that we have nonzero positive solutions.
$\hfill \Box$
\\ \\
\noindent 
\textbf{Data Availability} No data were generated or analyzed during the course of this study, so data sharing is not
applicable.
\section*{Deceleration}
\noindent
\textbf{Conflicts of Interest:} The author declares no conflicts of interest.\\

\noindent
\textbf{Ethical approval} This research did not involve any studies with human participants or animals. Therefore,
ethical approval was not required for this study.\\

\noindent
\textbf{Consent} The author agreed with the content of the paper and has given consent to the present submission.

\end{document}